\newtheorem{theorem}{Theorem}[section]
\newtheorem{lemma}[theorem]{Lemma}
\newtheorem{corollary}[theorem]{Corollary}
\newtheorem{proposition}[theorem]{Proposition}
\theoremstyle{definition}
\theoremstyle{remark}
\numberwithin{equation}{section}
\newcommand{\spt}[2]{\mbox{\normalfont spt}_{#1}\Parans{#2}}
\newcommand{\Parans}[1]{\left(#1\right)}
\newcommand{\CBrackets}[1]{\left\{#1\right\}}
\newcommand{\SBrackets}[1]{\left[#1\right]}
\newcommand{\PieceTwo}[4]
{
	\left\{
   	\begin{array}{ll}
      	#1 & #3 \\
       	#2 & #4
     	\end{array}
	\right.
}
\newcommand{\aqprod}[3]{\Parans{#1;#2}_{#3}}
\newcommand{\jacprod}[2]{\SBrackets{#1;#2}_{\infty}}
\newcommand{\GEta}[3]{\eta_{#1,#2}\Parans{#3}}
\newcommand{\TwoPhiOne}[5]{
	{_2\phi_1}
	\left(\hspace{-5pt}
	\begin{array}{ccc}
		#1, &\hspace{-5pt}#2  & \hspace{-5pt}\multirow{2}{*}{; $#4$, $#5$}
		\\
		& \hspace{-5pt}#3 &
	\end{array}\hspace{-5pt}
	\right)
}
\author{CHRIS JENNINGS-SHAFFER}
\address{Department of Mathematics, Oregon State University\\
Corvallis, Oregon 97330, USA
\endgraf jennichr@math.oregonstate.edu}
\keywords{Number theory, Andrews' spt-functon, partitions, partition pairs, 
smallest parts function, congruences, Bailey pairs, Bailey's Lemma,
conjugate Bailey pairs}
\subjclass[2010]{Primary 11P81, 11P83, 33D15}
\title{Exotic Bailey-Slater SPT-Functions III: 
Bailey Pairs from Groups B, F, G, and J}
\begin{document}
%\today

\allowdisplaybreaks

\begin{abstract}
We continue to investigate spt-type functions that arise from Bailey pairs.
In this third paper on the subject, we proceed to introduce additional
spt-type functions.
We prove simple Ramanujan type congruences for these functions 
which can be explained by a spt-crank-type function. 
The spt-crank-type functions are actually defined first, with the 
spt-type functions coming from setting $z=1$ in the spt-crank-type functions.
We find some of the spt-crank-type functions to have interesting representations
as single series, some of which reduce to infinite products. 
Additionally we find dissections of the other spt-crank-type functions when 
$z$ is a certain root of unity. Both methods are used to
explain congruences for the spt-type functions. 
Our series formulas require Bailey's Lemma and conjugate Bailey pairs.
Our dissection formulas follow from Bailey's Lemma and dissections of known 
ranks and cranks. 

\end{abstract}

\maketitle

\section{Introduction}
%%%%%%%%%%%%%%%%%%%%%%%%%%%%%%%%%%%%%%%%%%%%%%%%%%%%%%%%%%%%%%%%%%%%%%
%%%%%%%%%%%%%%%%%%%%%%%%%%%%%%%%%%%%%%%%%%%%%%%%%%%%%%%%%%%%%%%%%%%%%%
%%%%%%%%%%%%%%%%%%%%%%%%%%%%%%%%%%%%%%%%%%%%%%%%%%%%%%%%%%%%%%%%%%%%%%
\allowdisplaybreaks

We proceed with the study of spt-crank-type functions that the author began in
\cite{JenningsShaffer} and continued in \cite{GarvanJennings2}.  We begin with 
a brief introduction. We recall
a partition of $n$ is a non-increasing sequence of positive integers that sum to
$n$. For example, the partitions of $4$ are $4$, $3+1$, 
$2+2$, $2+1+1$, and $1+1+1+1$. We have Andrews smallest parts function from
\cite{Andrews}, $\spt{}{n}$, as the 
weighted count on partitions given by counting a partition
by the number of times the smallest part appears. From the partitions of $4$ we see that
$\spt{}{4}=10$. In this article we consider variations of the smallest parts functions.
We use the standard product notation
\begin{align*}
	\aqprod{z}{q}{n} 
		&= \prod_{j=0}^{n-1} (1-zq^j)
	,
	&\aqprod{z}{q}{\infty} 
		&= \prod_{j=0}^\infty (1-zq^j)
	,\\
	\aqprod{z_1,\dots,z_k}{q}{n} 
		&= \aqprod{z_1}{q}{n}\dots\aqprod{z_k}{q}{n}
	,
	&\aqprod{z_1,\dots,z_k}{q}{\infty} 
		&= \aqprod{z_1}{q}{\infty}\dots\aqprod{z_k}{q}{\infty},
	\\
	\jacprod{z}{q} 
		&= \aqprod{z,q/z}{q}{\infty}
	,	
	&\jacprod{z_1,\dots,z_k}{q} &= \jacprod{z_1}{q}\dots\jacprod{z_k}{q}
.
\end{align*}
We recall that a pair of sequences $(\alpha,\beta)$ is a Bailey pair relative
to $(a,q)$ if
\begin{align*}
	\beta_n = \sum_{k=0}^n \frac{\alpha_k}{\aqprod{q}{q}{n-k}\aqprod{aq}{q}{n+k}}
.
\end{align*}
One may consult \cite{Andrews2} for a history of Bailey pairs and
Bailey's Lemma.
Motivated by the prototype spt-crank functions
of \cite{AGL} and \cite{GarvanJennings} for partitions and overpartitions, we 
consider an spt-crank-type function to be a function of the form
\begin{align*}
	\frac{P(q)}{\aqprod{z,z^{-1}}{q}{\infty}}\sum_{n=1}^\infty \aqprod{z,z^{-1}}{q}{n}q^n\beta_n
,
\end{align*}
where $P(q)$ is some product and $\beta$ comes from a Bailey pair 
relative to $(1,q)$. We consider an spt-type function to be the $z=1$ case of an
spt-crank-type function. That is, for a Bailey pair $(\alpha^X,\beta^X)$ 
relative to $(1,q)$ we have the spt-crank-type and spt-type functions given by
\begin{align*}
	S_{X}(z,q) 
	&= 
		\frac{P_X(q)}{\aqprod{z,z^{-1}}{q}{\infty}}
		\sum_{n=1}^\infty \aqprod{z,z^{-1}}{q}{n} q^n \beta^X_n 
		=
		\sum_{n=1}^\infty \sum_{m=-\infty}^\infty M_X(m,n)z^mq^n
	,\\
	S_{X}(q) 
	&=
		S_{X}(1,q)
		=
		\sum_{n=1}^\infty \spt{X}{n}q^n	
	.
\end{align*}

The author is in the process of studying interesting spt-crank-type and
spt-type functions. This article introduces the last
of the spt-crank-type and spt-type functions arising from Bailey pairs in 
\cite{Slater1} and \cite{Slater2} that possess simple linear congruences
of the form $\spt{X}{pn+b}\equiv 0\pmod{p}$, where $p$ is an odd prime.
In \cite{JenningsShaffer} the author introduced the spt-crank-type functions
$S_{A1}(z,q)$, $S_{A3}(z,q)$, $S_{A5}(z,q)$, and $S_{A7}(z,q)$ which correspond
to the Bailey pairs $A(1)$, $A(3)$, $A(5)$, and $A(7)$ of \cite{Slater1}. 
In \cite{GarvanJennings2} Garvan and the author introduced the spt-crank-type
functions $S_{C1}(z,q)$, $S_{C5}(z,q)$, $S_{E2}(z,q)$, and $S_{E4}(z,q)$.
These spt-type functions satisfy many linear congruences, in particular,
$\spt{A1}{3n}\equiv\spt{A3}{3n+1}\equiv\spt{E2}{3n}\equiv\spt{E4}{3n+1}
\equiv 0\pmod{3}$,
$\spt{A3}{5n+1}\equiv\spt{A5}{5n+4}\equiv\spt{A7}{5n+1}\equiv\spt{C1}{5n+3}
\equiv\spt{C5}{5n+3}\equiv 0\pmod{5}$, and
$\spt{A5}{7n+1}\equiv 0\pmod{7}$.
Here we consider the Bailey pairs $B(2)$, $F(3)$, $G(4)$, and the entry just above
$G(4)$ from \cite{Slater1} and $J(1)$, $J(2)$, and $J(3)$ from \cite{Slater2}.

We prove simple linear congruences for the $\spt{X}{n}$ by considering
$S_{X}(\zeta,q)$, where $\zeta$ is a root of unity.
For $t$ a positive integer we define
\begin{align*}
	M_X(k,t,n) &= \sum_{m\equiv k \pmod{t}}M_X(m,n)
.
\end{align*}
We note that
\begin{align*}
	\spt{X}{n} &= \sum_{k=0}^{t-1} M_X(k,t,n).
\end{align*}
When $\zeta_t$ is a $t^{th}$ root of unity, we have
\begin{align*}
	S_X(\zeta_t,q) 
	&=
	\sum_{n=1}^\infty \left(\sum_{k=0}^{t-1}M_X(k,t,n)\zeta_t^k\right)q^n
.
\end{align*}
The last equation is of great importance because if $t$ is prime and 
$\zeta_t$ is a primitive $t^{th}$ root of unity, then the minimal polynomial
for $\zeta_t$ is $1+x+x^2+\dots+x^{t-1}$. Thus if the coefficient of $q^N$ in
$S_X(\zeta_t,q)$ is zero, then $\sum_{k=0}^{t-1}M_X(k,t,N)\zeta_t^k$ is zero 
and so $M_X(0,t,N)=M_X(1,t,N)=\dots=M_X(t-1,t,N)$. But then we would have that
$\spt{X}{N}=t\cdot M_X(0,t,N)$ and so
if $M_X(0,t,N)$ is an integer then
 clearly $\spt{X}{N}\equiv 0\pmod{t}$.
That is to say, if the coefficient of $q^N$ in $S_X(\zeta_t,q)$ is zero, then
$\spt{X}{N}\equiv 0 \pmod{t}$. Thus not only do we have the congruence
$\spt{X}{N}\equiv 0 \pmod{t}$, but also the stronger combinatorial result that
all of the $M_{X}(r,t,N)$ are equal.

In \cite{JenningsShaffer} the author found dissection formulas for
the $S_{Ai}(z,q)$ when $z$ is the appropriate root of unity to establish the
various congruences. In \cite{GarvanJennings2} Garvan and the author similarly
found dissection formulas for the $S_{Ci}(z,q)$ and $S_{Ei}(z,q)$ when $z$ is
the appropriate root of unity. The main difference between these two papers is
that the $S_{Ci}(z,q)$ and $S_{Ei}(z,q)$ could be expressed in terms of 
functions with known dissections, whereas the $S_{Ai}(z,q)$ could not.
Additionally in \cite{GarvanJennings2}, we found
interesting series representations for the $S_{Ci}(z,q)$, $S_{Ei}(z,q)$, and 
the $S_{Ai}(z,q)$ that are valid for all values of $z$, rather than just a fixed
root of unity. These series representations were a combination of single series
representations that showed that some of the spt-crank-type functions could be 
written just in terms of infinite products, and double series representations
that could be written as so called Hecke-Rogers type double sums. 

In the next section we define the new spt-crank-type and spt-type functions and state 
our main results, which are 
congruences for the various spt-type functions, single series representations
for some of the spt-crank-type functions,
and dissection formulas for the other spt-crank-type functions.

\section{Preliminaries and Statement of Results}

To begin we use the Bailey pair $B(2)$ from \cite{Slater1}
and $J(1$), $J(2)$, and $J(3)$ from \cite{Slater2}. Each of
these is a Bailey pair relative to $(1,q)$ and in all cases
$\alpha_0=\beta_0=1$.
\begin{align*}
	\beta^{B2}_n &= \frac{q^n}{\aqprod{q}{q}{n}}
	,&
	\alpha^{B2}_n &= 
	\left\{\begin{array}{ll}
		1 & \mbox{ if } n=0
		\\
		(-1)^n q^{3(n^2-n)/2 } (1+q^{3n}) &\mbox{ if } n\ge 1
	\end{array}\right.
	,\\
	\beta^{J1}_n &= \frac{\aqprod{q^3}{q^3}{n-1}}
		{\aqprod{q}{q}{2n-1}\aqprod{q}{q}{n}}
	,&
	\alpha^{J1}_n &=
	\left\{\begin{array}{ll}
		0	& \mbox{ if } n = 3k-1
		\\
		(-1)^{k}q^{\frac{9k^2-3k}{2}}(1+q^{3k}) 		& \mbox{ if } n = 3k
		\\
		0	& \mbox{ if } n = 3k+1
	\end{array}\right.
	,\\
	\beta^{J2}_n &= \frac{\aqprod{q^3}{q^3}{n-1}}
		{\aqprod{q}{q}{2n}\aqprod{q}{q}{n-1}}
	,&
	\alpha^{J2}_n &=
	\left\{\begin{array}{ll}
		(-1)^{k-1}  q^{\frac{9k^2 - 9k + 2}{2}}		& \mbox{ if } n = 3k-1
		\\
		(-1)^{k}q^{\frac{9k^2-3k}{2}}(1+q^{3k}) 		& \mbox{ if } n = 3k
		\\
		(-1)^{k+1}  q^{\frac{9k^2 + 9k + 2}{2}}		& \mbox{ if } n = 3k+1
	\end{array}\right.
	,\\
	\beta^{J3}_n &= \frac{q^n\aqprod{q^3}{q^3}{n-1}}
		{\aqprod{q}{q}{2n}\aqprod{q}{q}{n-1}}
	,&
	\alpha^{J3}_n &=
	\left\{\begin{array}{ll}
		(-1)^{k-1}  q^{\frac{9k^2 - 3k }{2}}			& \mbox{ if } n = 3k-1
		\\
		(-1)^{k} q^{\frac{9k^2-3k}{2}}(1+q^{3k})		& \mbox{ if } n = 3k
		\\
		(-1)^{k+1}  q^{\frac{9k^2 + 3k }{2}}			& \mbox{ if } n = 3k+1
	\end{array}\right.
.
\end{align*}
We note these Bailey pairs from group $J$ also appear as unlabeled Bailey pairs 
on page 467 of 
\cite{Slater1}. 
Additionally, we use the following Bailey pairs relative to $(1,q^2)$,
from \cite{Slater1}:
\begin{align*}
	\beta^{F3}_n &= \frac{q^{-n}}{\aqprod{q,q^2}{q^2}{n}}
	,&
	\alpha^{F3}_n &= 
	\left\{\begin{array}{ll}
		1 & \mbox{ if } n=0
		\\
		q^n+q^{-n} & \mbox{ if } n \ge 1
	\end{array}\right.,
	\\
	\beta^{G4}_n &= \frac{(-1)^nq^{n^2}}{\aqprod{q^4}{q^4}{n}\aqprod{-q}{q^2}{n}}
	,&
	\alpha^{G4}_n &= 
	\left\{\begin{array}{ll}
		1 & \mbox{ if } n=0
		\\
		(-1)^nq^{n(n-1)/2}(1+q^n) & \mbox{ if } n \ge 1
	\end{array}\right.,
	\\
	\beta^{AG4}_n &= \frac{(-1)^nq^{n^2-2n}}{\aqprod{q^4}{q^4}{n}\aqprod{-q}{q^2}{n}}
	,&
	\alpha^{AG4}_n &= 
	\left\{\begin{array}{ll}
		1 & \mbox{ if } n=0
		\\
		(-1)^nq^{n(n-3)/2}(1+q^{3n}) & \mbox{ if } n \ge 1
	\end{array}\right.
.
\end{align*}
The Bailey pair $AG(4)$ is the entry just above $G(4)$ in \cite{Slater1}.
For each Bailey pair we define a two variable spt-crank-type series as follows,
\begin{align*}
	S_{B2}(z,q)
	&=
	\frac{\aqprod{q}{q}{\infty}}{\aqprod{z,z^{-1}}{q}{\infty}}	
	\sum_{n=1}^\infty \frac{\aqprod{z,z^{-1}}{q}{n} q^{2n}}
		{\aqprod{q}{q}{n}}
	,\\
	S_{F3}(z,q)
	&=
	\frac{\aqprod{q}{q}{\infty}}{\aqprod{z,z^{-1}}{q^2}{\infty}}
	\sum_{n=1}^\infty \frac{\aqprod{z,z^{-1}}{q^2}{n} q^{n}}
		{\aqprod{q}{q}{2n}}
	,\\
	S_{G4}(z,q)
	&=
	\frac{\aqprod{q^4}{q^4}{\infty}\aqprod{-q}{q^2}{\infty}}
		{\aqprod{z,z^{-1}}{q^2}{\infty}}
	\sum_{n=1}^\infty \frac{\aqprod{z,z^{-1}}{q^2}{n}(-1)^nq^{n^2+2n}}
		{\aqprod{q^4}{q^4}{n}\aqprod{-q}{q^2}{n}}
	,\\
	S_{AG4}(z,q)
	&=
	\frac{\aqprod{q^4}{q^4}{\infty}\aqprod{-q}{q^2}{\infty}}
		{\aqprod{z,z^{-1}}{q^2}{\infty}}
	\sum_{n=1}^\infty \frac{\aqprod{z,z^{-1}}{q^2}{n}(-1)^nq^{n^2}}
		{\aqprod{q^4}{q^4}{n}\aqprod{-q}{q^2}{n}}
	,\\
	S_{J1}(z,q)
	&=
	\frac{\aqprod{q}{q}{\infty}^2}{\aqprod{q^3}{q^3}{\infty}\aqprod{z,z^{-1}}{q}{\infty}}
	\sum_{n=1}^\infty \frac{\aqprod{z,z^{-1}}{q}{n}\aqprod{q^3}{q^3}{n-1}q^n}
		{\aqprod{q}{q}{2n-1}\aqprod{q}{q}{n}}
	,\\
	S_{J2}(z,q)
	&=
	\frac{\aqprod{q}{q}{\infty}^2}{\aqprod{q^3}{q^3}{\infty}\aqprod{z,z^{-1}}{q}{\infty}}
	\sum_{n=1}^\infty \frac{\aqprod{z,z^{-1}}{q}{n}\aqprod{q^3}{q^3}{n-1}q^n}
		{\aqprod{q}{q}{2n}\aqprod{q}{q}{n-1}}
	,\\
	S_{J3}(z,q)
	&=
	\frac{\aqprod{q}{q}{\infty}^2}{\aqprod{q^3}{q^3}{\infty}\aqprod{z,z^{-1}}{q}{\infty}}
	\sum_{n=1}^\infty \frac{\aqprod{z,z^{-1}}{q}{n}\aqprod{q^3}{q^3}{n-1}q^{2n}}
		{\aqprod{q}{q}{2n}\aqprod{q}{q}{n-1}}
.
\end{align*}
While it is not true that $J(1)=J(2)+J(3)$, because $\beta^{J1}_{0}=1$, 
we do have $\beta^{J1}_n=\beta^{J2}_n+\beta^{J3}_n$ for $n\ge 1$ and so
$S_{J1}(z,q) = S_{J2}(z,q)+S_{J3}(z,q)$.

Next we define the corresponding spt-type functions.
For $B2$, $J1$, $J2$, and $J3$ we just set $z=1$ and simplify the products,
but for $F3$, $G4$, and $AG4$ we make some additional rearrangements.
\begin{align*}
	S_{B2}(q)
	=
	\sum_{n=1}^\infty \spt{B2}{n}q^n
	&=
	\sum_{n=1}^\infty \frac{q^{2n}}{(1-q^n)^2\aqprod{q^{n+1}}{q}{\infty}}
	,\\
	S_{J1}(q)
	=
	\sum_{n=1}^\infty \spt{J1}{n}q^n
	&=
	\sum_{n=1}^\infty \frac{q^n}{(1-q^n)^2\aqprod{q^{n+1}}{q}{n-1}\aqprod{q^{3n}}{q^3}{\infty}}	
	,\\
	S_{J2}(q)
	=
	\sum_{n=1}^\infty \spt{J2}{n}q^n
	&=
	\sum_{n=1}^\infty \frac{q^n}{\aqprod{q^n}{q}{n+1}\aqprod{q^{3n}}{q^3}{\infty}}	
	,\\
	S_{J3}(q)
	=
	\sum_{n=1}^\infty \spt{J3}{n}q^n
	&=
	\sum_{n=1}^\infty \frac{q^{2n}}{\aqprod{q^n}{q}{n+1}\aqprod{q^{3n}}{q^3}{\infty}}	
	,\\
	S_{F3}(q)	
	=
	\sum_{n=1}^\infty \spt{F3}{n}q^n
	&=
	\sum_{n=1}^\infty
	\frac{q^n\aqprod{q^{2n+1}}{q^2}{\infty}}
		{(1-q^{2n})^2\aqprod{q^{2n+2}}{q^2}{\infty}}
		\\
		&=
		\sum_{n=1}^\infty
		\frac{q^n}{(1-q^{2n})^2\aqprod{q^{n+1}}{q}{n}\aqprod{q^{2n+2}}{q^2}{\infty}}
		\frac{\aqprod{q^{n+1}}{q}{\infty}}{\aqprod{q^{2n+2}}{q^2}{\infty}}
	,\\
	S_{G4}(q)
	=
	\sum_{n=1}^\infty \spt{G4}{n}q^n
	&=
		\sum_{n=1}^\infty	
		\frac{(-1)^nq^{n^2+2n}\aqprod{-q^{2n+1}}{q^2}{\infty}}
			{(1-q^{2n})^2\aqprod{q^{2n+2}}{q^2}{n+1}\aqprod{q^{2n+2}}{q^2}{\infty}\aqprod{q^{4n+6}}{q^4}{\infty}}
		\\
		&=
		\sum_{n=1}^\infty	
		\frac{(-1)^nq^{n^2+2n}}
			{(1-q^{2n})^2\aqprod{q^{n+1}}{q}{n}\aqprod{q^{2n+2}}{q^2}{n+1}\aqprod{q^{4n+6}}{q^4}{\infty}}
		\frac{\aqprod{q^{n+1}}{q}{n}\aqprod{-q^{2n+1}}{q^2}{\infty}}
			{\aqprod{q^{2n+2}}{q^2}{\infty}}
	,\\
	S_{AG4}(q)
	=
	\sum_{n=1}^\infty \spt{G4}{n}q^n
	&=
		\sum_{n=1}^\infty	
		\frac{(-1)^nq^{n^2}\aqprod{-q^{2n+1}}{q^2}{\infty}}
			{(1-q^{2n})^2\aqprod{q^{2n+2}}{q^2}{n+1}\aqprod{q^{2n+2}}{q^2}{\infty}\aqprod{q^{4n+6}}{q^4}{\infty}}
		\\
		&=
		\sum_{n=1}^\infty	
		\frac{(-1)^nq^{n^2}}
			{(1-q^{2n})^2\aqprod{q^{n+1}}{q}{n}\aqprod{q^{2n+2}}{q^2}{n+1}\aqprod{q^{4n+6}}{q^4}{\infty}}
		\frac{\aqprod{q^{n+1}}{q}{n}\aqprod{-q^{2n+1}}{q^2}{\infty}}
			{\aqprod{q^{2n+2}}{q^2}{\infty}}
	.
\end{align*}
We recall that an overpartition is a partition in which a part may be overlined
the first time it appears; overpartitions can be identified with partition
pairs $(\pi_1,\pi_2)$ where $\pi_2$ is restricted to having distinct parts.
For $\pi$ either a partition or an overpartition,
we let $s(\pi)$ denote the smallest part of $\pi$,
$spt(\pi)$ denote the number of times $s(\pi)$ occurs,
and $\#(\pi)$ denote the number of parts of $\pi$.
For overpartitions 
we let a superscript $n$ in these operators mean the restriction to the
non-overlined parts and a superscript $o$ mean the restriction to the overlined parts.
For example, $\#^o(\pi)$ is the number of overlined parts of the overpartition
$\pi$ and $s^n(\pi)$ is the smallest non-overlined part.
We can now give the combinatorial interpretation of the various 
spt-type functions.

We see $\spt{B2}{n}$ is the number of partitions $\pi$ of $n$ weighted by
the number of times $s(\pi)$ appears past the first occurrence. From this 
interpretation we see that $\spt{B2}{n}=\spt{}{n}-p(n)$.
We see $\spt{J1}{n}$ is the number of partitions $\pi$ of $n$ 
weighted by the number of times $s(\pi)$ appears,
where the allowed parts are those from $s(\pi)$ to $2s(\pi)-1$ and those that
 are divisible by 
$3$ and at least $3s(\pi)$.
We see $\spt{J2}{n}$ is the number of partitions $\pi$ of $n$ where 
the parts are those from $s(\pi)$ to $2s(\pi)$ and those that are divisible by $3$
and at least $3s(\pi)$.
Similarly 
$\spt{J3}{n}$ is the number of partitions $\pi$ of $n$ where 
the smallest part appears at least twice and
the parts are those from $s(\pi)$ to $2s(\pi)$ and those that are divisible by $3$
and at least $3s(\pi)$.

From the generating functions we see that $\spt{J1}{n}=\spt{J2}{n}+\spt{J3}{n}$,
as pointed out by the referee is it also not difficult to explain this combinatorially.
Suppose we fix $n$ and let $Ji(n)$ denote the set of partitions counted by
$\spt{Ji}{n}$. We have $J1(n)$ is the set of partitions
$\pi$ of $n$ where no part $\pi_i$ satisfies $2s(\pi)\le\pi_i<3s(\pi)$,
and if a part $\pi_i\ge 3s(\pi)$ then $3$ divides $\pi_i$.
Similarly $J2(n)$ is the set of partitions
$\pi$ of $n$ where no part $\pi_i$ satisfies $2s(\pi)<\pi_i<3s(\pi)$,
and if a part $\pi_i\ge 3s(\pi)$ then $3$ divides $\pi_i$.
Lastly $J3(n)$ is the set of partitions
$\pi$ of $n$ where $s(\pi)$ appears at least twice,
no part $\pi_i$ satisfies $2s(\pi)\le\pi_i<3s(\pi)$,
and if a part $\pi_i\ge 3s(\pi)$ then $3$ divides $\pi_i$.
Given a partition $\pi\in J2(n)$, we obtain a partition of $J1(n)$
by taking the part $2s(\pi)$ and writing it as $s(\pi)+s(\pi)$, so the 
smallest part appears two more times for each time $2s(\pi)$ appeared;
this function is clearly onto and is exactly 
$\left\lfloor\frac{spt(\pi)+1}{2}\right\rfloor$-to-one.
Given a partition $\pi\in J3(n)$, we obtain a partition of $J1(n)$
in the same way,
but now we miss the elements of $J1(n)$ whose smallest part appears
exactly once and this function is
$\left\lfloor\frac{spt(\pi)}{2}\right\rfloor$-to-one.  
Since 
$\left\lfloor\frac{spt(\pi)+1}{2}\right\rfloor + \left\lfloor\frac{spt(\pi)}{2}\right\rfloor
=spt(\pi)$, we see $\spt{J1}{n}=\spt{J2}{n}+\spt{J3}{n}$

For $S_{F3}(q)$, we first note that 
\begin{align*}
	\frac{q^n}{(1-q^{2n})^2}
	&=
	q^n + 2q^{3n} + 3q^{5n} + 4q^{7n} + 5q^{9n} +\dots
.
\end{align*}
We let $F3$ denote the set of pairs $(\pi_1,\pi_2)$,
where $\pi_1$ is a partition with $spt(\pi_1)$ odd and 
the parts that are at least $2s(\pi)$ must even; and $\pi_2$ is an overpartition 
where all non-overlined parts are even, $s^n(\pi_2) \ge 2s(\pi_1)+2$, 
and $s^o(\pi_2)\ge s(\pi_1)+1$.
Then we see that $\spt{F3}{n}$ is the number of partition pairs $(\pi_1,\pi_2)$
of $n$ from $F3$, weighted by 
$(-1)^{\#^o(\pi_2)}\left(\frac{spt(\pi_1)+1}{2}\right)$.

For $S_{G4}(q)$, we first note that
\begin{align*}
	\frac{q^{n^2+2n}}{(1-q^{2n})^2}
	&=
	q^{n(n+2)} + 2q^{n(n+4)} + 3q^{n(n+6)} +  4q^{n(n+8)}+\dots
.
\end{align*}
We let $G4$ be the set of pairs $(\pi_1,\pi_2)$ where
$\pi_1$ is a partition
such that $spt(\pi_1) \ge s(\pi_1)+2$,
$spt(\pi_1)+s(\pi_1)$ is even,
parts larger than $2s(\pi_1)$ must be even,
and parts larger than $4s(\pi_1)$ must be congruent to $2\pmod{4}$;
and $\pi_2$ is an overpartition
with all non-overlined parts even,
$s^n(\pi_2) \ge 2s(\pi_1)+2$,
$s^o(\pi_2) \ge s(\pi)+1$, and 
overlined parts that are at least $2s(\pi_1)+1$ are odd.
For an overpartition $\pi$, we let $k_m(\pi)$ denote the number 
of overlined parts of $\pi$ that less than $2m+1$.
Then $\spt{G4}{n}$ is the number of partition pairs of $n$ from
$G4$, weighted by
$(-1)^{s(\pi_1) + k_{s(\pi_1)}(\pi_2) } \left(\frac{spt(\pi_1)-s(\pi)}{2}\right)$.

For $S_{AG4}(q)$, we first note that
\begin{align*}
	\frac{q^{n^2}}{(1-q^{2n})^2}
	&=
	q^{n(n)} + 2q^{n(n+2)} + 3q^{n(n+4)} +  4q^{n(n+6)}+\dots
.
\end{align*}
We let $AG4$ be the set of pairs $(\pi_1,\pi_2)$ where
$\pi_1$ is a partition
such that $spt(\pi_1) \ge s(\pi_1)$,
$spt(\pi_1)+s(\pi_1)$ is even,
parts larger than $2s(\pi_1)$ must be even,
and parts larger than $4s(\pi_1)$ must be congruent to $2\pmod{4}$;
and $\pi_2$ is an overpartition
with all non-overlined parts even,
$s^n(\pi_2) \ge 2s(\pi_1)+2$,
$s^o(\pi_2) \ge s(\pi)+1$, and 
overlined parts that are at least $2s(\pi_1)+1$ are odd.
Then $\spt{AG4}{n}$ is the number of partition pairs of $n$ from
$AG4$, weighted by
$(-1)^{s(\pi_1) + k_{s(\pi_1)}(\pi_2) } \left(\frac{spt(\pi_1)-s(\pi)+2}{2}\right)$.

These functions satisfy the following congruences.
\begin{theorem}\label{TheoremTheCongruences}
\begin{align*}
	\spt{F3}{3n}\equiv 0\pmod{3}
	,\\
	\spt{J1}{3n+2}\equiv 0\pmod{3}
	,\\
	\spt{J2}{3n}\equiv 0\pmod{3}
	,\\
	\spt{J3}{3n+1}\equiv 0\pmod{3}
	,\\
	\spt{B2}{5n+1}\equiv 0\pmod{5}
	,\\
	\spt{B2}{5n+4}\equiv 0\pmod{5}
	,\\
	\spt{F3}{5n}\equiv 0\pmod{5}
	,\\
	\spt{F3}{5n+4}\equiv 0\pmod{5}
	,\\
	\spt{G4}{5n+4}\equiv 0\pmod{5}
	,\\
	\spt{AG4}{5n+4}\equiv 0\pmod{5}
	,\\
	\spt{B2}{7n+1}\equiv 0\pmod{7}
	,\\
	\spt{B2}{7n+5}\equiv 0\pmod{7}
	,\\
	\spt{F3}{7n}\equiv 0\pmod{7}
	,\\
	\spt{F3}{7n+4}\equiv 0\pmod{7}
	,\\
	\spt{F3}{7n+6}\equiv 0\pmod{7}
.
\end{align*}
\end{theorem}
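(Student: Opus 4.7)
The plan is to use the criterion established in the introduction: since $3$, $5$, and $7$ are all prime, each congruence $\spt{X}{tn+b}\equiv 0\pmod{t}$ will follow once the coefficient of $q^{tn+b}$ in $S_X(\zeta_t,q)$ is shown to vanish for every $n\ge 0$, where $\zeta_t$ denotes a primitive $t^{\text{th}}$ root of unity. Hence the theorem reduces to establishing, for each pair $(X,t)$ appearing in the statement, an explicit formula for $S_X(\zeta_t,q)$ in which the vanishing of the forbidden arithmetic progressions of coefficients is transparent.

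First I would produce a usable formula for each $S_X(z,q)$ in one of the two forms employed in \cite{JenningsShaffer} and \cite{GarvanJennings2}. Several of the functions, after an application of Bailey's Lemma with a suitably chosen conjugate Bailey pair, should collapse to a single series or an infinite product in $z$ and $q$; specializing $z=\zeta_t$ then yields an expression whose $q$-expansion plainly avoids the desired residue classes modulo $t$. This is especially promising for $S_{J1}$ at $t=3$, since the Bailey pair $J(1)$ is supported only on indices $n\equiv 0\pmod 3$, and the same mechanism should capture the mod $3$ behavior of $S_{J2}$, $S_{J3}$, and $S_{F3}$ via the relation $S_{J1}=S_{J2}+S_{J3}$ together with direct product evaluations. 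For the remaining pairs, in particular for $S_{B2}$, $S_{F3}$, $S_{G4}$, and $S_{AG4}$ at $t=5$ and $t=7$, I would instead apply Bailey's Lemma at $z=\zeta_t$ to rewrite $S_X(\zeta_t,q)$ in terms of objects with known $t$-dissections, such as the crank generating function $\aqprod{q}{q}{\infty}/\aqprod{z,z^{-1}q}{q}{\infty}$, Dyson's rank for overpartitions, and the classical $5$- and $7$-dissections of $\aqprod{q}{q}{\infty}$ due to Ramanujan. Decomposing the result as $S_X(\zeta_t,q)=\sum_{j=0}^{t-1}q^{j}g_{X,j}(q^{t})$ and verifying $g_{X,j}\equiv 0$ for each residue $j$ forbidden by the theorem then yields the congruences one line at a time.

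The main obstacle will be the three simultaneous mod $7$ congruences for $\spt{F3}{n}$, where the single $7$-dissection of $S_{F3}(\zeta_7,q)$ has to kill the three separate residues $0$, $4$, and $6$. This requires a tight alignment between the Bailey-Lemma output and the $7$-dissection of $\aqprod{q}{q}{\infty}$, rigid enough that one typically has to try several conjugate Bailey pairs or insert an auxiliary Jacobi-triple-product substitution before the zeros land in the right places. A secondary technical nuisance is that the Bailey pairs $F(3)$, $G(4)$, and $AG(4)$ are relative to $(1,q^2)$ rather than $(1,q)$, so both the root-of-unity criterion and Bailey's Lemma have to be applied with base $q^2$ in place of $q$; once this bookkeeping is set up correctly, however, the mod $5$ congruences for $\spt{G4}{n}$ and $\spt{AG4}{n}$ should follow from essentially one and the same dissection identity, reflecting the obvious structural parallel between $\beta^{G4}$ and $\beta^{AG4}$.
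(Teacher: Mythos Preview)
Your plan is essentially the paper's own strategy: reduce each congruence to the vanishing of the relevant coefficients in $S_X(\zeta_t,q)$, establish single-series identities via Bailey's Lemma with well-chosen conjugate Bailey pairs for the $J$-functions (and also for $F3$, $G4$, $AG4$), and for $B2$, $F3$, $G4$, $AG4$ at the larger primes rewrite $S_X(\zeta_t,q)$ in terms of objects with known $t$-dissections. Two small corrections worth flagging: for $S_{B2}$ the paper expresses $(1-z)(1-z^{-1})S_{B2}(z,q)$ in terms of the ordinary partition rank $R(z,q)$ and crank $C(z,q)$ (not the overpartition rank), via the identity $S_{B2}(z,q)=\frac{(z+z^{-1}-1)R(z,q)-C(z,q)}{(1-z)(1-z^{-1})}+1$; and for the $7$-dissection of $S_{F3}(\zeta_7,q)$ the alignment you anticipate being delicate is in fact handled by reducing to identities among generalized eta-quotients on $\Gamma_1(98)$ and verifying them by a finite $q$-expansion check, rather than by purely manipulative means.
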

That $\spt{J1}{3n+2}\equiv 0$ is actually known. In \cite{Patkowski} 
Patkowski considered this smallest parts function and proved that 
$\spt{J1}{3n+2}\equiv 0$. Although that proof is dependent on Bailey's Lemma,
the proof is not through a spt-crank-type function as we have here.
Since $\spt{B2}{n}=\spt{}{n}-p(n)$, the congruences 
$\spt{B2}{5n+4}\equiv 0\pmod{5}$ and $\spt{B2}{7n+5}\equiv 0\pmod{7}$
also follow from the fact that both $\spt{}{n}$ and $p(n)$ satisfy these
congruences.
We use the spt-crank-type functions to prove the congruences of Theorem \ref{TheoremTheCongruences}
 as 
explained in the introduction. This will be as a corollary to the following
two theorems.
\begin{theorem}\label{TheoremSingleSeries}
\begin{align}
	\label{TheoremSeriesForJ1}
	&(1+z)\aqprod{z,z^{-1},q}{q}{\infty}S_{J1}(z,q)
	\nonumber\\
	&=\sum_{j=2}^\infty 
		\frac{(1-z^{j-1})(1-z^j)z^{1-j}(-1)^{j+1}q^{\frac{j(j-1)}{2}}
			(1-q^j - q^{2j-2} + q^{4j-3} + q^{5j-2} - q^{6j-3} )}
		{(1-q^{3j-3})(1-q^{3j})}
	%checked in maple
	,\\
	\label{TheoremSeriesForJ2}
	&(1+z)\aqprod{z,z^{-1},q}{q}{\infty}S_{J2}(z,q)
	\nonumber\\
	&=\sum_{j=2}^\infty 
		\frac{(1-z^{j-1})(1-z^j)z^{1-j}(-1)^{j+1}q^{\frac{j(j-1)}{2}}
			(1-q^{j-1} - q^{2j} + q^{4j-1} + q^{5j-3} - q^{6j-3} )}
		{(1-q^{3j-3})(1-q^{3j})}
	%checked in maple
	,\\
	\label{TheoremSeriesForJ3}
	&(1+z)\aqprod{z,z^{-1},q}{q}{\infty}S_{J3}(z,q)
	\nonumber\\
	&=\sum_{j=2}^\infty 
		\frac{(1-z^{j-1})(1-z^j)z^{1-j}(-1)^{j+1}q^{\frac{j(j-1)}{2}}
			(q^{j-1} - q^{j} - q^{2j-2} + q^{2j} + q^{4j-3} - q^{4j-1} - q^{5j-3} + q^{5j-2})}
		{(1-q^{3j-3})(1-q^{3j})}
	%checked in maple
	,\\
	\label{TheoremSeriesForF3}
	&(1+z)\aqprod{z,z^{-1},q}{q^2}{\infty}S_{F3}(z,q)
		=\sum_{j=-\infty}^\infty 
		(1-z^{j-1})(1-z^j)z^{1-j}(-1)^{j+1}q^{(j-1)^2}
	%checked in maple
	,\\
	\label{TheoremSeriesForG4}
	&(1+z)\aqprod{z,z^{-1}}{q^2}{\infty}S_{G4}(z,q)
	=
		\sum_{j=-\infty}^\infty (1-z^{j-1})(1-z^j)z^{1-j}q^{2j^2-j}
	%checked in maple
	,\\
	\label{TheoremSeriesForAG4}
	&(1+z)\aqprod{z,z^{-1}}{q^2}{\infty}S_{AG4}(z,q)
	=	
		\sum_{j=-\infty}^\infty (1-z^{j-1})(1-z^j)z^{1-j}q^{2j^2+j}
	%checked in maple
	.
\end{align}
\end{theorem}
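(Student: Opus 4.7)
The plan is to derive each of the six identities via the bilinear pairing $\sum_n \alpha_n \gamma_n = \sum_n \beta_n \delta_n$ between the given Bailey pair $(\alpha^X,\beta^X)$ and a suitably chosen \emph{conjugate} Bailey pair $(\gamma,\delta)$, then to exploit the sparsity of the $\alpha^X$ to collapse the resulting $\alpha$-sum into a single-index series. This is the same strategy carried out in \cite{GarvanJennings2} for the single-series representations of $S_{Ci}(z,q)$ and $S_{Ei}(z,q)$, and the present identities are its natural analogues for the groups $B$, $F$, $G$, $J$.

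First I would select the conjugate Bailey pair whose $\delta_n$ is proportional to $\aqprod{z,z^{-1}}{q}{n} q^n$ (respectively to $\aqprod{z,z^{-1}}{q^2}{n} q^n$ for the base-$q^2$ pairs $F3$, $G4$, $AG4$); the natural candidate comes from Bailey's standard conjugate pair with parameters $\rho_1=z$, $\rho_2=z^{-1}$, whose $\gamma_n$ has an explicit closed form expressible via a $q$-Gauss evaluation. With this choice the $\beta$-side of the bilinear identity, after multiplication by the prefactor $P_X(q)$ and by $\aqprod{z,z^{-1},q}{q}{\infty}$, reproduces the left-hand sides of \eqref{TheoremSeriesForJ1}--\eqref{TheoremSeriesForAG4}; the leading factor $(1+z)$ arises naturally from the conversion between $\sum_{n\ge 0}$ and $\sum_{n\ge 1}$ together with the normalization $\alpha_0^X=1$.

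Next I would evaluate the $\alpha$-side separately for each Bailey pair. For $J(1)$, $J(2)$, $J(3)$ the sequence $\alpha^{Ji}_n$ is supported only at $n=3k$ and (for $J(2)$, $J(3)$) at $n=3k\pm 1$ with explicit powers of $q$, so $\sum_n \alpha^{Ji}_n \gamma_n$ collapses into a sum over $k$; after reindexing $k\mapsto j$ and placing the three contributions over the common denominator $(1-q^{3j-3})(1-q^{3j})$ coming from the $\aqprod{q}{q}{n\pm 1}$-type factors of $\gamma_n$, the numerators in \eqref{TheoremSeriesForJ1}--\eqref{TheoremSeriesForJ3} should emerge directly. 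For $F(3)$, $G(4)$, $AG(4)$, the $\alpha^X_n$ are symmetric expressions of the form $q^{f(n)}+q^{f(-n)}$, so pairing the $\pm n$ contributions with $\alpha_0=1$ combines the right-hand sum into a bilateral series over $j\in\mathbb{Z}$, giving the indefinite theta-type expressions \eqref{TheoremSeriesForF3}--\eqref{TheoremSeriesForAG4}.

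The main obstacle will be pinning down the conjugate Bailey pair that produces exactly the $z$-dependent numerator $(1-z^{j-1})(1-z^j)z^{1-j}$ uniformly across all six identities; once this $\gamma_n$ is in hand, the remaining work is algebraic simplification of the several summands arising from each $\alpha^{Ji}$ and verification that the low-index ($k=0,1$) terms either vanish or can be absorbed so that the sum may be written starting at $j=2$. The already-established identity $S_{J1}(z,q)=S_{J2}(z,q)+S_{J3}(z,q)$ provides a useful consistency check on \eqref{TheoremSeriesForJ1}--\eqref{TheoremSeriesForJ3}: subtracting the numerators of \eqref{TheoremSeriesForJ2} and \eqref{TheoremSeriesForJ3} should collapse, after the common factor $(1-z^{j-1})(1-z^j)z^{1-j}(-1)^{j+1}q^{j(j-1)/2}$ is extracted, to the numerator of \eqref{TheoremSeriesForJ1}.
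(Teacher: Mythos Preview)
Your plan has a genuine gap at exactly the point you flag as ``the main obstacle.'' Pairing the $X$ Bailey pair against the standard conjugate pair with $\delta_n=\aqprod{z,z^{-1}}{q}{n}q^n$ (i.e.\ Bailey's Lemma with $\rho_1=z$, $\rho_2=z^{-1}$) produces $\gamma_n$ of the form
\[
\gamma_n\;\propto\;\frac{(1-z)(1-z^{-1})q^n}{(1-zq^n)(1-z^{-1}q^n)},
\]
a rational function of $z$, not the Laurent polynomial $(1-z^{j-1})(1-z^j)z^{1-j}$. This is precisely the Lambert-type series the paper uses later for the \emph{dissection} results (Sections~4--6), not for Theorem~\ref{TheoremSingleSeries}. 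No single conjugate Bailey pair, applied once to $(\alpha^X,\beta^X)$, will manufacture that $j$-dependent polynomial factor; moreover, for the $J$-series the summation index $j$ in the theorem bears no direct relation to the Bailey index $n$ (which runs through $3k$ and $3k\pm1$).

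The paper's mechanism is different in kind. It never touches $\alpha^X$. Instead it applies Garvan's identity \eqref{GarvanProp41},
\[
\frac{(1+z)\aqprod{z,z^{-1}}{q}{n}}{\aqprod{q}{q}{2n}}
=\sum_{j=-n}^{n+1}\frac{(-1)^{j+1}(1-q^{2j-1})z^jq^{j(j-3)/2+1}}{\aqprod{q}{q}{n+j}\aqprod{q}{q}{n-j+1}},
\]
to expand the left side as a finite Laurent polynomial in $z$; this is what generates the factor $(1-z^{j-1})(1-z^j)z^{1-j}$ after the $z^j$ and $z^{1-j}$ contributions are combined. For each fixed $j$, the remaining $n$-sum is then recognized as $\sum_n(\cdots)\beta^*_n(q^{2j-1},q)$ (or $\beta^{**}_n$) for the \emph{trivial} Bailey pairs \eqref{FirstBaileyPair}--\eqref{SecondBaileyPair}, with a $j$-dependent base point $a$. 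Applying the Bailey-Lemma variants \eqref{LemmaBailey1}--\eqref{LemmaBailey7} (including the new conjugate pairs of Lemma~\ref{LemmaNewConjugateBaileyPairs} for the $J$-cases) collapses that $n$-sum to a single term because $\alpha^*$ is supported only at $n=0$. So the Bailey machinery enters, but it acts on the trivial pair at varying $a$, not on $(\alpha^X,\beta^X)$; the step you are missing is \eqref{GarvanProp41}.
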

\begin{theorem}\label{TheoremDissections}
\begin{align}
	\label{Dissection5B2}
	&S_{B2}(\zeta_5,q)
	\nonumber\\
	&=
		1
		-\frac{\aqprod{q^{25}}{q^{25}}{\infty}\jacprod{q^{10}}{q^{25}}}
			{\jacprod{q^{5}}{q^{25}}^2}
		+
		(1-\zeta_5-\zeta_5^4)q^5\frac{1}{\aqprod{q^{25}}{q^{25}}{\infty}}
			\sum_{n=-\infty}^\infty \frac{(-1)^nq^{75n(n+1)/2}}{1-q^{25n+5}}
		+
		q^2\frac{\aqprod{q^{25}}{q^{25}}{\infty}}
			{\jacprod{q^{10}}{q^{25}}}
		\nonumber\\&\quad
		+
		(\zeta_5+\zeta_5^4)
		q^3\frac{\aqprod{q^{25}}{q^{25}}{\infty}\jacprod{q^{5}}{q^{25}}}
			{\jacprod{q^{10}}{q^{25}}^2}
		+
		(\zeta_5+\zeta_5^4)q^8\frac{1}{\aqprod{q^{25}}{q^{25}}{\infty}}
		\sum_{n=-\infty}^\infty \frac{(-1)^nq^{75n(n+1)/2}}{1-q^{25n+10}}
	%checked in maple
	,\\
	%%%%%%%%%%%%%%%%%%%%%%%%%%%%%%%%%%%%%%%%%%%%%%%%%%%%%%%%%%%%%%%%%%%%%%%%%%%%%%
	%%%%%%%%%%%%%%%%%%%%%%%%%%%%%%%%%%%%%%%%%%%%%%%%%%%%%%%%%%%%%%%%%%%%%%%%%%%%%%
	%%%%%%%%%%%%%%%%%%%%%%%%%%%%%%%%%%%%%%%%%%%%%%%%%%%%%%%%%%%%%%%%%%%%%%%%%%%%%%
	\label{Dissection7B2}
	&S_{B2}(\zeta_7,q)
	\nonumber\\
	&=
		\zeta_7+\zeta_7^6
		-
		(\zeta_7+\zeta_7^6)	
		\frac{\aqprod{q^{49}}{q^{49}}{\infty}\jacprod{q^{21}}{q^{49}}}
			{\jacprod{q^{7},q^{14}}{q^{49}}}
		+
		(-1+\zeta_7+\zeta_7^6)
		q^7\frac{1}{\aqprod{q^{49}}{q^{49}}{\infty}}
			\sum_{n=-\infty}^\infty \frac{(-1)^nq^{147n(n+1)/2}}{1-q^{49n+7}}
		\nonumber\\&\quad
		+
		q^2\frac{\aqprod{q^{49}}{q^{49}}{\infty}\jacprod{q^{14}}{q^{49}}}
			{\jacprod{q^{7},q^{21}}{q^{49}}}	
		+
		(1+\zeta_7^2+\zeta_7^5)q^{16}\frac{1}{\aqprod{q^{49}}{q^{49}}{\infty}}
			\sum_{n=-\infty}^\infty \frac{(-1)^nq^{147n(n+1)/2}}{1-q^{49n+21}}
		\nonumber\\&\quad
		+
		(\zeta_7 +\zeta_7^6)
		q^3\frac{\aqprod{q^{49}}{q^{49}}{\infty}}
			{\jacprod{q^{14}}{q^{49}}}
		+
		(1 + \zeta_7 + \zeta_7^2 + \zeta_7^5 +\zeta_7^6)
		q^4\frac{\aqprod{q^{49}}{q^{49}}{\infty}}
			{\jacprod{q^{21}}{q^{49}}}
		\nonumber\\&\quad
		-		
		(\zeta_7^2 + \zeta_7^5)
		q^{13}\frac{1}{\aqprod{q^{49}}{q^{49}}{\infty}}
			\sum_{n=-\infty}^\infty \frac{(-1)^nq^{147n(n+1)/2}}{1-q^{49n+14}}
		+
		q^6\frac{\aqprod{q^{49}}{q^{49}}{\infty}\jacprod{q^{7}}{q^{49}}}
			{\jacprod{q^{14},q^{21}}{q^{49}}}
	%checked in maple
	,\\
	%%%%%%%%%%%%%%%%%%%%%%%%%%%%%%%%%%%%%%%%%%%%%%%%%%%%%%%%%%%%%%%%%%%%%%%%%%%%%%
	%%%%%%%%%%%%%%%%%%%%%%%%%%%%%%%%%%%%%%%%%%%%%%%%%%%%%%%%%%%%%%%%%%%%%%%%%%%%%%
	%%%%%%%%%%%%%%%%%%%%%%%%%%%%%%%%%%%%%%%%%%%%%%%%%%%%%%%%%%%%%%%%%%%%%%%%%%%%%%
	\label{Dissection3F3}
	&S_{F3}(\zeta_3,q)
	=
		q\frac{\aqprod{q^{18}}{q^{18}}{\infty}\aqprod{q^9}{q^9}\infty{}}
			{\aqprod{q^{6}}{q^{6}}{\infty}}
		+
		q^2\frac{\aqprod{q^{18}}{q^{18}}{\infty}^4\aqprod{q^3}{q^3}{\infty}}
			{\aqprod{q^{9}}{q^{9}}{\infty}^2\aqprod{q^6}{q^6}{\infty}^2}
	%checked in maple
	,\\
	%%%%%%%%%%%%%%%%%%%%%%%%%%%%%%%%%%%%%%%%%%%%%%%%%%%%%%%%%%%%%%%%%%%%%%%%%%%%%%
	%%%%%%%%%%%%%%%%%%%%%%%%%%%%%%%%%%%%%%%%%%%%%%%%%%%%%%%%%%%%%%%%%%%%%%%%%%%%%%
	%%%%%%%%%%%%%%%%%%%%%%%%%%%%%%%%%%%%%%%%%%%%%%%%%%%%%%%%%%%%%%%%%%%%%%%%%%%%%%
	\label{Dissection5F3}
	&S_{F3}(\zeta_5,q)
	\nonumber\\
	&=
		q\frac{\aqprod{q^{25}}{q^{25}}{\infty}}
			{\jacprod{q^{10}}{q^{50}}}
		+
		\frac{3+\zeta_5+\zeta_5^4}{5}
		q^2\frac{\aqprod{q^{50}}{q^{50}}{\infty}\jacprod{q^{15}}{q^{50}}}
			{\aqprod{q^{25}}{q^{50}}{\infty}\jacprod{q^{10}}{q^{50}}}
		-	
		\frac{1+2\zeta_5+2\zeta_5^4}{5}
		q^2\frac{\aqprod{q^{25}}{q^{25}}{\infty}\jacprod{q^{10}}{q^{25}}}
			{\jacprod{q^5}{q^{25}}\jacprod{q^{20}}{q^{50}}}
		\nonumber\\&\quad
		+
		\frac{3+\zeta_5+\zeta_5^4}{5}
		q^2\frac{\aqprod{q^{25}}{q^{25}}{\infty}\jacprod{q^5}{q^{25}}}
			{\jacprod{q^{10}}{q^{25}}\jacprod{q^{10}}{q^{50}}}
		-
		\frac{1+2\zeta_5+2\zeta_5^4}{5}
		q^7\frac{\aqprod{q^{50}}{q^{50}}{\infty}\jacprod{q^{5}}{q^{50}}}
			{\aqprod{q^{25}}{q^{50}}{\infty}\jacprod{q^{20}}{q^{50}}}
		\nonumber\\&\quad
		+
		(1+\zeta_5+\zeta_5^4)
		q^3\frac{\aqprod{q^{25}}{q^{25}}{\infty}}
			{\jacprod{q^{20}}{q^{50}}}
	%checked in maple
	,\\
	%%%%%%%%%%%%%%%%%%%%%%%%%%%%%%%%%%%%%%%%%%%%%%%%%%%%%%%%%%%%%%%%%%%%%%%%%%%%%%
	%%%%%%%%%%%%%%%%%%%%%%%%%%%%%%%%%%%%%%%%%%%%%%%%%%%%%%%%%%%%%%%%%%%%%%%%%%%%%%
	%%%%%%%%%%%%%%%%%%%%%%%%%%%%%%%%%%%%%%%%%%%%%%%%%%%%%%%%%%%%%%%%%%%%%%%%%%%%%%
	\label{Dissection7F3}
	&S_{F3}(\zeta_7,q)
	\nonumber\\
	&=
		\frac{18 + 9\zeta_7 + 3\zeta_7^2 + 3\zeta_7^5 + 9\zeta_7^6}{7}
		q\frac{\aqprod{q^{98}}{q^{98}}{\infty}\jacprod{q^{35}}{q^{98}}}
			{\aqprod{q^{49}}{q^{98}}{\infty}\jacprod{q^{14}}{q^{98}}}
		-
		\frac{5 + 6\zeta_7 + 2\zeta_7^2 + 2\zeta_7^5 + 6\zeta_7^6}{7}
		q\frac{\aqprod{q^{14}}{q^{14}}{\infty}}
		{\aqprod{q^{7}}{q^{14}}{\infty}}
		\nonumber\\&\quad
		+
		\frac{2 + \zeta_7 - 2\zeta_7^2 - 2\zeta_7^5 + \zeta_7^6}{7}
		q^8\frac{\aqprod{q^{98}}{q^{98}}{\infty}\jacprod{q^{21}}{q^{98}}}
			{\aqprod{q^{49}}{q^{98}}{\infty}\jacprod{q^{28}}{q^{98}}}
		-
		\frac{2 + \zeta_7 - 2\zeta_7^2 - 2\zeta_7^5 + \zeta_7^6}{7}
		q\frac{\aqprod{q^{49}}{q^{49}}{\infty}\jacprod{q^{35}}{q^{98}}}
			{\jacprod{q^{21}}{q^{49}}}
		\nonumber\\&\quad
		-
		\frac{4 + 2\zeta_7 + 3\zeta_7^2 + 3\zeta_7^5 + 2\zeta_7^6}{7}
		q\frac{\aqprod{q^{49}}{q^{49}}{\infty}\jacprod{q^{21}}{q^{98}}}
			{\jacprod{q^{7}}{q^{49}}}
		+
		q^2\frac{\aqprod{q^{49}}{q^{49}}{\infty}\jacprod{q^{14}}{q^{49}}}
			{\jacprod{q^{7}}{q^{49}}\jacprod{q^{28}}{q^{98}}}
		\nonumber\\&\quad
		+
		q^3(1+\zeta_7+\zeta_7^6)\frac{\aqprod{q^{49}}{q^{49}}{\infty}\jacprod{q^{7}}{q^{49}}}
			{\jacprod{q^{21}}{q^{49}}\jacprod{q^{14}}{q^{98}}}	
		+
		q^5(1+\zeta_7+\zeta_7^2+\zeta_7^5+\zeta_7^6)\frac{\aqprod{q^{49}}{q^{49}}{\infty}\jacprod{q^{21}}{q^{49}}}
			{\jacprod{q^{14}}{q^{49}}\jacprod{q^{42}}{q^{98}}}
	%maple agrees with this 7dissection
	,\\
	%%%%%%%%%%%%%%%%%%%%%%%%%%%%%%%%%%%%%%%%%%%%%%%%%%%%%%%%%%%%%%%%%%%%%%%%%%%%%%
	%%%%%%%%%%%%%%%%%%%%%%%%%%%%%%%%%%%%%%%%%%%%%%%%%%%%%%%%%%%%%%%%%%%%%%%%%%%%%%
	%%%%%%%%%%%%%%%%%%%%%%%%%%%%%%%%%%%%%%%%%%%%%%%%%%%%%%%%%%%%%%%%%%%%%%%%%%%%%%
	\label{Dissection5G4}
	&S_{G4}(\zeta_5,q)
	\nonumber\\
	&=
		-(1+\zeta_5+\zeta_5^4)q^{10}
			\frac{\aqprod{q^{100}}{q^{100}}{\infty}\jacprod{q^{10}}{q^{200}}}
			{\jacprod{q^{10}}{q^{50}}\jacprod{q^{5}}{q^{100}}}
		-
		(\zeta_5+\zeta_5^4)q^{5}
			\frac{\aqprod{q^{50}}{q^{50}}{\infty}}
			{\aqprod{q^{25}}{q^{50}}{\infty}\jacprod{q^{20}}{q^{50}}}
		\nonumber\\&\quad
		-
		q^{6}
			\frac{\aqprod{q^{100}}{q^{100}}{\infty}\jacprod{q^{30}}{q^{200}}}
			{\jacprod{q^{10}}{q^{50}}\jacprod{q^{15}}{q^{100}}}
		-
		q^{12}
			\frac{\aqprod{q^{100}}{q^{100}}{\infty}\jacprod{q^{10}}{q^{200}}}
			{\jacprod{q^{20}}{q^{50}}\jacprod{q^{5}}{q^{100}}}
		-
		q^{3}
			\frac{\aqprod{q^{50}}{q^{50}}{\infty}}
			{\aqprod{q^{25}}{q^{50}}{\infty}\jacprod{q^{10}}{q^{50}}}
		\nonumber\\&\quad
		-
		(\zeta_5+\zeta_5^4)q^{8}
			\frac{\aqprod{q^{100}}{q^{100}}{\infty}\jacprod{q^{30}}{q^{200}}}
			{\jacprod{q^{20}}{q^{50}}\jacprod{q^{15}}{q^{100}}}
	%checked in maple
	,\\
	%%%%%%%%%%%%%%%%%%%%%%%%%%%%%%%%%%%%%%%%%%%%%%%%%%%%%%%%%%%%%%%%%%%%%%%%%%%%%%
	%%%%%%%%%%%%%%%%%%%%%%%%%%%%%%%%%%%%%%%%%%%%%%%%%%%%%%%%%%%%%%%%%%%%%%%%%%%%%%
	%%%%%%%%%%%%%%%%%%%%%%%%%%%%%%%%%%%%%%%%%%%%%%%%%%%%%%%%%%%%%%%%%%%%%%%%%%%%%%
	\label{Dissection5AG4}
	&S_{AG4}(\zeta_5,q)
	\nonumber\\
	&=
		-q^{10}
			\frac{\aqprod{q^{100}}{q^{100}}{\infty}\jacprod{q^{10}}{q^{200}}}
			{\jacprod{q^{10}}{q^{50}}\jacprod{q^{5}}{q^{100}}}
		-
		q\frac{\aqprod{q^{100}}{q^{100}}{\infty}\jacprod{q^{70}}{q^{200}}}
			{\jacprod{q^{10}}{q^{50}}\jacprod{q^{35}}{q^{100}}}
		\nonumber\\&\quad
		-
		(1+\zeta_5+\zeta_5^4)q^{6}
			\frac{\aqprod{q^{100}}{q^{100}}{\infty}\jacprod{q^{30}}{q^{200}}}
			{\jacprod{q^{10}}{q^{50}}\jacprod{q^{15}}{q^{100}}}
		-
		(\zeta_5+\zeta_5^4)q^{12}
			\frac{\aqprod{q^{100}}{q^{100}}{\infty}\jacprod{q^{10}}{q^{200}}}
			{\jacprod{q^{20}}{q^{50}}\jacprod{q^{5}}{q^{100}}}
		\nonumber\\&\quad
		-
		(\zeta_5+\zeta_5^4)q^{3}
			\frac{\aqprod{q^{100}}{q^{100}}{\infty}\jacprod{q^{70}}{q^{200}}}
			{\jacprod{q^{20}}{q^{50}}\jacprod{q^{35}}{q^{100}}}
		-q^8
			\frac{\aqprod{q^{100}}{q^{100}}{\infty}\jacprod{q^{30}}{q^{200}}}
			{\jacprod{q^{20}}{q^{50}}\jacprod{q^{15}}{q^{100}}}
	%checked in maple
.
\end{align}
\end{theorem}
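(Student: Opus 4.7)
The plan is to apply Bailey's Lemma to each $S_X(z,q)$ in order to replace the $\beta^X_n$-series with the corresponding $\alpha^X_n$-series, then specialize $z$ to the root of unity and match what results against generating functions for known partition statistics (ranks and cranks) whose dissections are already in the literature.

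First I would apply Bailey's Lemma with parameters $\rho_1=z$, $\rho_2=z^{-1}$, $a=1$ (and with base $q$ or $q^2$ as appropriate) to each $S_X(z,q)$. This converts $\sum \aqprod{z,z^{-1}}{q}{n} q^n \beta^X_n$ into an $\alpha^X_n$-sum times a ratio of infinite products. Because each $\alpha^X_n$ in our list is essentially a signed power of $q$ times $(1+q^{cn})$ (or a sparse variant in the $J$-cases), the resulting $\alpha$-sums are manifestly theta-like: after simplification via the Jacobi triple product they take the shape of crank or rank generating functions. For the pairs relative to $(1,q)$ this will produce the partition crank generating function $\aqprod{q}{q}{\infty}/\jacprod{z}{q}$ as the dominant building block, while for $F(3)$, $G(4)$, $AG(4)$ the base-$q^2$ version produces a crank-of-$q^2$ factor together with extra $\aqprod{-q}{q^2}{\infty}$ pieces that correspond to an overpartition rank.

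Next I would set $z=\zeta_t$. The factor $(1-\zeta_t^k)$ appearing in the intermediate expressions vanishes whenever $t\mid k$, so many summands drop out, and the surviving sum can be split according to residue classes $k\bmod t$. At this stage I would invoke the known $t$-dissection formulas: the Garvan $5$- and $7$-dissections of the crank generating function for $S_{B2}$; Chan's $3$-dissection (and the Garvan $5$- and $7$-dissections) for $S_{F3}$; and the appropriate $5$-dissection of an overpartition rank/crank to handle $S_{G4}$ and $S_{AG4}$. Substituting these dissections into the intermediate expression and collecting residue classes modulo $t$ should produce the claimed formulas \eqref{Dissection5B2}--\eqref{Dissection5AG4}.

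The main obstacle will be the matching step: the Bailey-transformed expression at $\zeta_t$ is generally not literally a single crank/rank generating function, but a $\mathbb{Z}[\zeta_t]$-linear combination of such objects together with extra theta residues coming from the $(1+q^{cn})$ factors in $\alpha^X_n$. Expressing these extras in terms of the Jacobi products $\jacprod{q^{a}}{q^{t^2}}$ and identifying the non-theta piece as one of the Hecke-type sums $\sum (-1)^n q^{ct^2 n(n+1)/2}/(1-q^{t^2 n+b})$ visible in \eqref{Dissection5B2} and \eqref{Dissection7B2} is where the bookkeeping is most delicate. The congruences of Theorem \ref{TheoremTheCongruences} then follow as a corollary: for each case the $\zeta_t$-residue classes corresponding to the asserted congruence are absent from the right-hand side of the dissection, forcing those coefficients in $S_X(\zeta_t,q)$ to vanish and hence $\spt{X}{tn+b}\equiv 0\pmod t$.
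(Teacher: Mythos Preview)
Your opening move---apply Bailey's Lemma with $\rho_1=z$, $\rho_2=z^{-1}$ to convert each $\beta$-sum into an $\alpha$-sum plus an infinite product---is exactly what the paper does. But from that point on your outline understates what is actually required, and in places misidentifies the objects that appear.

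For $S_{B2}$ you say the crank generating function is the dominant building block and plan to invoke only Garvan's crank dissections. In fact the $\alpha^{B2}$-series that emerges is \emph{not} the crank; it is close to but distinct from the rank generating function $R(z,q)$. The paper needs a separate lemma (Lemma~\ref{LemmaB2ToRank}) to show that this series equals $(z+z^{-1}-1)R(z,q)+(1-z)(1-z^{-1})$, so that
\[
S_{B2}(z,q)=\frac{(z+z^{-1}-1)R(z,q)-C(z,q)}{(1-z)(1-z^{-1})}+1.
\]
Both the Atkin--Swinnerton-Dyer rank dissections and the Garvan crank dissections are then substituted. Without the rank identification you cannot reach \eqref{Dissection5B2} or \eqref{Dissection7B2}.

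For $S_{F3}$ the series $1+\sum_{n\ge1}\frac{(1-z)(1-z^{-1})q^n(1+q^{2n})}{(1-zq^{2n})(1-z^{-1}q^{2n})}$ is not any known rank or crank, and no off-the-shelf dissection applies. At $z=\zeta_t$ the paper rewrites it as a $\mathbb{Z}[\zeta_t]$-combination of Lambert series $\sum(q^{an}-q^{bn})/(1-q^{2tn})$, then evaluates these via Fine's formula (mod~$3$), Berndt's identity \eqref{LambertSeriesIdentBerndt} and the Andrews--Lewis--Liu identity \eqref{LambertSeriesIdentALL} (mod~$5$ and~$7$), and for mod~$7$ additionally reduces the claimed dissection to an identity among generalized eta-quotients on $\Gamma_1(98)$ verified by the valence formula plus a finite $q$-expansion check. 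This is substantially more than bookkeeping.

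For $S_{G4}$ and $S_{AG4}$ the $\alpha$-series are again not overpartition ranks/cranks. The paper introduces auxiliary sums $V_\ell(b)$, $U_\ell(b)$ and Appell--Lerch-type functions $T$, $T^*$, $h$ in the style of Ekin, proves a lemma expressing $V_\ell(b)-q^{(b+1)/2}U_\ell(b+2)$ in terms of $h$ and theta quotients, and then (in outline) reduces everything to modular-function identities. Your plan to quote an existing overpartition dissection would not suffice here.
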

We note the identities of Theorems \ref{TheoremSingleSeries} and 
\ref{TheoremDissections} are inherently different. In Theorem \ref{TheoremDissections}
 we have an identity 
for $z=\zeta_\ell$, a primitive $\ell^{th}$ root of unity and we have an 
explicit formula for
each term of the $\ell$-dissection. In Theorem \ref{TheoremSingleSeries}
we have an identity for general $z$,
but if we set $z=\zeta_\ell$, we are able to determine some but not necessary 
all of the terms in the $\ell$-dissection.

With these two Theorems we will show that the
coefficients of the following terms are zero:
$q^{3n}$ in $S_{F3}(\zeta_3,q)$,
$q^{3n+2}$ in $S_{J1}(\zeta_3,q)$,
$q^{3n}$ in $S_{J2}(\zeta_3,q)$,
$q^{3n+1}$ in $S_{J3}(\zeta_3,q)$,
$q^{5n+1}$ in $S_{B2}(\zeta_5,q)$,
$q^{5n+4}$ in $S_{B2}(\zeta_5,q)$,
$q^{5n}$ in $S_{F3}(\zeta_5,q)$,
$q^{5n+4}$ in $S_{F3}(\zeta_5,q)$,
$q^{5n+4}$ in $S_{G4}(\zeta_5,q)$,
$q^{5n+4}$ in $S_{AG4}(\zeta_5,q)$,
$q^{7n+1}$ in $S_{B2}(\zeta_7,q)$,
$q^{7n+5}$ in $S_{B2}(\zeta_7,q)$,
$q^{7n}$ in $S_{F3}(\zeta_7,q)$,
$q^{7n+4}$ in $S_{F3}(\zeta_7,q)$, and
$q^{7n+6}$ in $S_{F3}(\zeta_7,q)$.
As explained in the introduction, this gives the following corollary which also
establishes the congruences of Theorem \ref{TheoremTheCongruences}.
\begin{corollary}\label{TheoremEqualCranks}
For $n\ge 0$,
\begin{align*}
	M_{F3}(0,3,3n)&=M_{F3}(1,3,3n)=M_{F3}(2,3,3n)=\frac{1}{3}\spt{F3}{3n}
	,\\
	M_{J1}(0,3,3n+2)&=M_{J1}(1,3,3n+2)=M_{J1}(2,3,3n+2)=\frac{1}{3}\spt{J1}{3n+2}
	,\\
	M_{J2}(0,3,3n)&=M_{J2}(1,3,3n)=M_{J2}(2,3,3n)=\frac{1}{3}\spt{J2}{3n}
	,\\
	M_{J3}(0,3,3n+1)&=M_{J3}(1,3,3n+1)=M_{J3}(2,3,3n+1)=\frac{1}{3}\spt{J3}{3n+1}
	,\\
	M_{B2}(0,5,5n+1)&=M_{B2}(1,5,5n+1)=M_{B2}(2,5,5n+1)
		=M_{B2}(3,5,5n+1)=M_{B2}(4,5,5n+1)
		\\&=\frac{1}{5}\spt{B2}{5n+1}
	,\\
	M_{B2}(0,5,5n+4)&=M_{B2}(1,5,5n+4)=M_{B2}(2,5,5n+4)
		=M_{B2}(3,5,5n+4)=M_{B2}(4,5,5n+4)
		\\&=\frac{1}{5}\spt{B2}{5n+4}
	,\\
	M_{F3}(0,5,5n)&=M_{F3}(1,5,5n)=M_{F3}(2,5,5n)
		=M_{F3}(3,5,5n)=M_{F3}(4,5,5n)
		\\&=\frac{1}{5}\spt{F3}{5n}
	,\\
	M_{F3}(0,5,5n+4)&=M_{F3}(1,5,5n+4)=M_{F3}(2,5,5n+4)
		=M_{F3}(3,5,5n+4)=M_{F3}(4,5,5n+4)
		\\&=\frac{1}{5}\spt{F3}{5n+4}
	,\\
	M_{G4}(0,5,5n+4)&=M_{G4}(1,5,5n+4)=M_{G4}(2,5,5n+4)
		=M_{G4}(3,5,5n+4)=M_{G4}(4,5,5n+4)
		\\&=\frac{1}{5}\spt{G4}{5n+4}
	,\\
	M_{AG4}(0,5,5n+4)&=M_{AG4}(1,5,5n+4)=M_{AG4}(2,5,5n+4)
		=M_{AG4}(3,5,5n+4)=M_{AG4}(4,5,5n+4)
		\\&=\frac{1}{5}\spt{AG4}{5n+4}
	,\\
	M_{B2}(0,7,7n+1)&=M_{B2}(1,7,7n+1)=M_{B2}(2,7,7n+1)=M_{B2}(3,7,7n+1)=M_{B2}(4,7,7n+1)
		\\&=M_{B2}(5,7,7n+1)=M_{B2}(6,7,7n+1)=\frac{1}{7}\spt{B2}{7n+1}
	,\\
	M_{B2}(0,7,7n+5)&=M_{B2}(1,7,7n+5)=M_{B2}(2,7,7n+5)=M_{B2}(3,7,7n+5)=M_{B2}(4,7,7n+5)
		\\&=M_{B2}(5,7,7n+5)=M_{B2}(6,7,7n+5)=\frac{1}{7}\spt{B2}{7n+5}
	,\\
	M_{F3}(0,7,7n)&=M_{F3}(1,7,7n)=M_{F3}(2,7,7n)=M_{F3}(3,7,7n)
		=M_{F3}(4,7,7n)\\&=M_{F3}(5,7,7n)=M_{F3}(6,7,7n)=\frac{1}{7}\spt{F3}{7n}
	,\\
	M_{F3}(0,7,7n+4)&=M_{F3}(1,7,7n+4)=M_{F3}(2,7,7n+4)=M_{F3}(3,7,7n+4)
		=M_{F3}(4,7,7n+4)\\&=M_{F3}(5,7,7n+4)=M_{F3}(6,7,7n+4)=\frac{1}{7}\spt{F3}{7n+4}
	,\\
	M_{F3}(0,7,7n+6)&=M_{F3}(1,7,7n+6)=M_{F3}(2,7,7n+6)=M_{F3}(3,7,7n+6)
		=M_{F3}(4,7,7n+6)\\&=M_{F3}(5,7,7n+6)=M_{F3}(6,7,7n+6)=\frac{1}{7}\spt{F3}{7n+6}
	.
\end{align*}
\end{corollary}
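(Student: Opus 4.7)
The plan is to deduce the corollary from the general principle laid out in the introduction: if the coefficient of $q^N$ in $S_X(\zeta_t,q)$ vanishes, then $\sum_{k=0}^{t-1}M_X(k,t,N)\zeta_t^k=0$, and since $1+x+\dots+x^{t-1}$ is the minimal polynomial of a primitive $t$-th root of unity whenever $t$ is prime, the values $M_X(0,t,N),\dots,M_X(t-1,t,N)$ must all be equal; as they are non-negative integers summing to $\spt{X}{N}$, each equals $\tfrac{1}{t}\spt{X}{N}$. Therefore it suffices to verify that each $q$-coefficient listed between Theorem \ref{TheoremDissections} and the corollary truly vanishes, and this is precisely what Theorems \ref{TheoremSingleSeries} and \ref{TheoremDissections} are set up to furnish.

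For the cases of $B2$ at $\zeta_5,\zeta_7$, of $F3$ at $\zeta_3,\zeta_5,\zeta_7$, and of $G4$ and $AG4$ at $\zeta_5$, I would read off the answer directly from Theorem \ref{TheoremDissections}. Every summand on the right-hand side of the stated dissections has the form $c_a\,q^a\,G_a(q)$, where $G_a$ is a quotient of Pochhammer symbols, Jacobi theta products, or a Lambert-type series of the type $\sum_n (-1)^nq^{Q(n)}/(1-q^{L(n)})$, in which every exponent of $q$ that arises is a multiple of $t$ (each base is a power of $q^t$, e.g.\ $\aqprod{q^{25}}{q^{25}}{\infty}$, $\jacprod{q^{10}}{q^{25}}$, and the exponents $75n(n+1)/2$, $25n+5$ for $t=5$). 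Consequently that summand contributes solely to the residue class $a\pmod t$, and listing the prefactor exponents $a$ one reads off exactly which residues appear. In each case the target classes (for instance $1,4\pmod 5$ for $B2$ at $\zeta_5$, or $1,5\pmod 7$ for $B2$ at $\zeta_7$, or $0\pmod 3$ for $F3$ at $\zeta_3$) are precisely those missing.

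For $J1,J2,J3$ at $\zeta_3$ only Theorem \ref{TheoremSingleSeries} is available. First I would simplify the left-hand prefactor: using $\prod_{k\ge 1}(1-\zeta_3q^k)(1-\zeta_3^{-1}q^k)=\prod_{k\ge 1}(1+q^k+q^{2k})=\aqprod{q^3}{q^3}{\infty}/\aqprod{q}{q}{\infty}$ and $1+\zeta_3=-\zeta_3^2$, one obtains $(1+\zeta_3)\aqprod{\zeta_3,\zeta_3^{-1},q}{q}{\infty}=-\zeta_3^2\,\aqprod{q^3}{q^3}{\infty}$, a nonzero scalar times a product supported on exponents divisible by $3$. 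Dividing the identity through therefore preserves residue classes mod $3$ of the exponents appearing in $S_{Ji}(\zeta_3,q)$. On the right-hand side, $(1-\zeta_3^{j-1})(1-\zeta_3^j)$ annihilates every $j\not\equiv 2\pmod 3$; for $j\equiv 2\pmod 3$ a direct reduction gives $j(j-1)/2\equiv 1\pmod 3$, the exponents $j,2j-2,4j-3,5j-2$ are $\equiv 2\pmod 3$, the exponents $j-1,2j,4j-1,5j-3$ are $\equiv 1\pmod 3$, and $6j-3\equiv 0\pmod 3$. Since the denominator $(1-q^{3j-3})(1-q^{3j})$ expands in powers of $q^3$ it does not affect these residues. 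Combining, the right-hand series for $J1$, $J2$, $J3$ is supported on residues $\{0,1\}$, $\{1,2\}$, $\{0,2\}$ mod $3$ respectively, so the missing classes $2,0,1$ give the claimed vanishings of $q^{3n+2}$, $q^{3n}$, $q^{3n+1}$.

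The principal obstacle is simply the volume of residue-class bookkeeping: the $\zeta_7$ dissections for $B2$ and for $F3$ each involve roughly ten summands, and for each one must confirm that the accompanying product or Lambert-type series is supported on multiples of $7$ so that the residue of the summand is governed entirely by the prefactor $q^a$. Each individual check is elementary, but the checks must be carried out uniformly across all the listed cases to produce the full family of identities in Corollary \ref{TheoremEqualCranks}.
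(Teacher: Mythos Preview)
Your proposal is correct and follows essentially the same approach as the paper: Theorem \ref{TheoremDissections} handles the $B2$, $F3$, $G4$, $AG4$ cases by inspection of the residue classes of the prefactor exponents in each $t$-dissection, and Theorem \ref{TheoremSingleSeries} handles the $J1$, $J2$, $J3$ cases via the observation that $(1-\zeta_3^{j-1})(1-\zeta_3^{j})$ kills all $j\not\equiv 2\pmod 3$, followed by the same residue check on the surviving exponents. One small quibble: you call the $M_X(k,t,N)$ ``non-negative integers,'' but non-negativity is not established in the paper (indeed the concluding remarks note it remains open for most of these functions); fortunately your argument only uses integrality, which is clear from the Laurent expansion of $S_X(z,q)$, so this does not affect the validity of the proof.
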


We note (\ref{TheoremSeriesForJ1}) follows from adding (\ref{TheoremSeriesForJ2}) 
and (\ref{TheoremSeriesForJ3}).
Theorem \ref{TheoremSingleSeries} also 
lets us easily deduce the following product identities for 
$S_{F3}(z,q)$,
$S_{G4}(z,q)$, and $S_{AG4}(z,q)$.
\begin{corollary}
\begin{align*}
	S_{F3}(z,q)
	&=
	\frac{\aqprod{zq,z^{-1}q,q^2}{q^2}{\infty}}
		{\aqprod{z,z^{-1},q}{q^2}{\infty}}
	-
		\frac{\aqprod{q}{q}{\infty}}
			{\aqprod{z,z^{-1}}{q^2}{\infty}}
	,\\
	S_{G4}(z,q)
	&=
	\frac{z\aqprod{-z^{-1}q,-zq^3,q^4}{q^4}{\infty}}
		{(1+z)\aqprod{z,z^{-1}}{q^2}{\infty}}
	+
	\frac{\aqprod{-zq,-z^{-1}q^3,q^4}{q^4}{\infty}}
		{(1+z)\aqprod{z,z^{-1}}{q^2}{\infty}}
	-
	\frac{\aqprod{q^2}{q^2}{\infty}}{\aqprod{q,z,z^{-1}}{q^2}{\infty}}
	%checked in maple
	,\\
	S_{AG4}(z,q)
	&=
	\frac{z\aqprod{-zq,-z^{-1}q^3,q^4}{q^4}{\infty}}
		{(1+z)\aqprod{z,z^{-1}}{q^2}{\infty}}
	+
	\frac{\aqprod{-z^{-1}q,-zq^3,q^4}{q^4}{\infty}}
		{(1+z)\aqprod{z,z^{-1}}{q^2}{\infty}}
	-
	\frac{\aqprod{q^2}{q^2}{\infty}}{\aqprod{q,z,z^{-1}}{q^2}{\infty}}
	%checked in maple
.
\end{align*}
\end{corollary}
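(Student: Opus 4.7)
The plan is to derive all three product identities by evaluating the bilateral series in Theorem \ref{TheoremSingleSeries} in closed form via Jacobi's triple product. The key algebraic observation is
\begin{align*}
	(1-z^{j-1})(1-z^j)z^{1-j} = z^{1-j} - 1 - z + z^j,
\end{align*}
so the right-hand sides of (\ref{TheoremSeriesForF3}), (\ref{TheoremSeriesForG4}), and (\ref{TheoremSeriesForAG4}) each split into four bilateral theta series, which merge pairwise into a $(1+z)$-multiple of something more symmetric.

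For (\ref{TheoremSeriesForF3}), reindexing $j \mapsto 1-j$ in the $z^{1-j}$ piece and $j \mapsto j-1$ in the $z^j$ piece recenters both sums so that $q^{(j-1)^2}$ becomes $q^{k^2}$. Applying Jacobi's triple product in the form $\sum_n z^n q^{n^2} = \aqprod{-zq,-z^{-1}q,q^2}{q^2}{\infty}$ (with the sign $z\mapsto -z$ absorbing the $(-1)^{j+1}$) collapses each of the three resulting theta sums to an infinite product, giving
\begin{align*}
	(1+z)\aqprod{z,z^{-1},q}{q^2}{\infty}S_{F3}(z,q)
	= (1+z)\aqprod{q^2}{q^2}{\infty}\left[\aqprod{zq,z^{-1}q}{q^2}{\infty} - \aqprod{q,q}{q^2}{\infty}\right].
\end{align*}
Dividing both sides by $(1+z)\aqprod{z,z^{-1},q}{q^2}{\infty}$ and using $\aqprod{q^2,q}{q^2}{\infty}=\aqprod{q}{q}{\infty}$ on the second term yields the stated formula.

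For (\ref{TheoremSeriesForG4}) and (\ref{TheoremSeriesForAG4}) the strategy is identical but the relevant theta exponent is now $2j^2 \mp j$, so one applies Jacobi's triple product in base $q^4$. Specifically, substituting $z\mapsto zq^{\pm 1}$ in $\sum_n z^n q^{2n^2}=\aqprod{-zq^2,-z^{-1}q^2,q^4}{q^4}{\infty}$ yields
\begin{align*}
	\sum_n z^n q^{2n^2-n} &= \aqprod{-zq,-z^{-1}q^3,q^4}{q^4}{\infty},
	&\sum_n z^n q^{2n^2+n} &= \aqprod{-zq^3,-z^{-1}q,q^4}{q^4}{\infty}.
\end{align*}
For $S_{G4}$, the reindexing $j\mapsto -j$ in the $z^{1-j}$ piece converts it to $z\sum_k z^k q^{2k^2+k}$, which evaluates to $z\aqprod{-z^{-1}q,-zq^3,q^4}{q^4}{\infty}$, while the $z^j$ piece evaluates directly to $\aqprod{-zq,-z^{-1}q^3,q^4}{q^4}{\infty}$; for $S_{AG4}$ the roles of the two products are swapped because the exponent is $2j^2+j$. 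After dividing by $(1+z)\aqprod{z,z^{-1}}{q^2}{\infty}$, the constant term is simplified through the identity $\aqprod{q^4}{q^4}{\infty}\aqprod{-q}{q^2}{\infty} = \aqprod{q^2}{q^2}{\infty}/\aqprod{q}{q^2}{\infty}$, which produces the single factor $\aqprod{q^2}{q^2}{\infty}/\aqprod{q,z,z^{-1}}{q^2}{\infty}$ appearing in the corollary.

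The derivation is entirely routine once the four-term expansion of $(1-z^{j-1})(1-z^j)z^{1-j}$ has been made; there is no substantive obstacle, only index-shift and sign bookkeeping to align each bilateral theta sum with a canonical form of Jacobi's triple product and to simplify the resulting infinite products using $\aqprod{-q}{q^2}{\infty}\aqprod{q}{q^2}{\infty} = \aqprod{q^2}{q^4}{\infty}$.
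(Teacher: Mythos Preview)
Your proposal is correct and follows essentially the same route as the paper: expand $(1-z^{j-1})(1-z^j)z^{1-j}=z^{1-j}+z^j-1-z$, recognize the resulting bilateral sums as theta series, and evaluate each via the Jacobi triple product identity (in base $q^2$ for $S_{F3}$ and base $q^4$ for $S_{G4}$, $S_{AG4}$). The only cosmetic difference is that the paper, for $S_{F3}$, uses the symmetry $j\mapsto 2-j$ to convert $z^{1-j}$ into $z^{j-1}$ and then factors $(1+z)$ out of $z^{j-1}+z^j$ before applying the triple product once, whereas you reindex the two $z$-dependent pieces separately; the outcome and the amount of work are the same.
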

These follow by rearranging the series in 
Theorem \ref{TheoremSingleSeries} and applying the Jacobi triple product 
identity. For example,
\begin{align*}
	(1+z)\aqprod{z,z^{-1},q}{q^2}{\infty}S_{F3}(z,q)
	&=
		\sum_{j=-\infty}^\infty 
		(1-z^{j-1})(1-z^j)z^{1-j}(-1)^{j+1}q^{(j-1)^2}
	\\
	&=
		\sum_{j=-\infty}^\infty 
		(z^{1-j}+z^j)(-1)^{j+1}q^{(j-1)^2}
		-
		(1+z)\sum_{j=-\infty}^\infty 
		(-1)^{j+1}q^{(j-1)^2}
	\\
	&=
		\sum_{j=-\infty}^\infty 
		(z^{j-1}+z^j)(-1)^{j+1}q^{(j-1)^2}
		-
		(1+z)\frac{\aqprod{q}{q}{\infty}^2}{\aqprod{q^2}{q^2}{\infty}}
	\\
	&=
		(1+z)\sum_{j=-\infty}^\infty 
		z^j(-1)^{j}q^{j^2}
		-
		(1+z)\frac{\aqprod{q}{q}{\infty}^2}{\aqprod{q^2}{q^2}{\infty}}
	\\
	&=
		(1+z)\aqprod{zq,z^{-1}q,q^2}{q^2}{\infty}
		-
		(1+z)\frac{\aqprod{q}{q}{\infty}^2}{\aqprod{q^2}{q^2}{\infty}}
.
\end{align*}
The identities for $S_{G4}(z,q)$ and $S_{AG4}(z,q)$ are similar.

We summarize the results of this article in the following table:

\begin{tabular}{|c|c|c|c|c|}
	\hline
	Bailey pair & linear congruence & single series & product  & dissection  
	\\
	$X$ &  mod $p$ & identity for $S_X(z,q)$ & identity for $S_X(z,q)$ & identity for $S_X(\zeta_p,q)$
	\\
	\hline
	$B2$ & $p=5,7$ 		& No 	& No 	& Yes
	\\
	$F3$ & $p=3,5,7$ 	& Yes & Yes & Yes
	\\
	$G4$ & $p=5$			& Yes & Yes & Yes
	\\
	$AG4$ & $p=5$			& Yes & Yes & Yes
	\\
	$J1$ & $p=3$			& Yes & No & No
	\\
	$J2$ & $p=3$			& Yes & No & No
	\\
	$J3$ & $p=3$			&Yes & No & No
	\\
	\hline
\end{tabular}

\noindent
In Section 3 we prove the series identities in Theorem 
\ref{TheoremSingleSeries}. 
In Section 4 we prove the dissections for
$S_{B2}(\zeta_5,q)$ and $S_{B2}(\zeta_7,q)$.
In Section 5 we prove the dissections for $S_{F3}(\zeta_3,q)$, 
$S_{F3}(\zeta_5,q)$, and $S_{F3}(\zeta_7,q)$.
In Section 6 we sketch a proof that is independent of Theorem
\ref{TheoremSingleSeries} for the dissections for
$S_{AG4}(\zeta_5,q)$ and $S_{AG4}(\zeta_7,q)$. 
In Section 7 we use Theorems
\ref{TheoremSingleSeries} and \ref{TheoremDissections}
to prove Corollary \ref{TheoremEqualCranks}.
In Section 8 we give some concluding remarks, in particular we discuss some
additional Bailey pairs from \cite{Slater1} whose
spt-crank-type functions reduce to previous
functions after a change of variables.

\section{Proof of Series Identities}

The proof of these identities is to verify that the coefficients of each
power of $z$ on the left hand side and right hand side agree. This depends on
a identity of Garvan from \cite{Garvan2} to determine the coefficients of the 
powers of $z$ in the left hand side of the identities in Theorem 
\ref{TheoremSingleSeries}, and a variant of Bailey's lemma applied to one of
two general Bailey pairs to transform the coefficients of the powers of $z$.
The following is Proposition 4.1 of \cite{Garvan2},
\begin{align}\label{GarvanProp41}
	\frac{(1+z)\aqprod{z,z^{-1}}{q}{n}}{\aqprod{q}{q}{2n}}
	&=
	\sum_{j=-n}^{n+1}
	\frac{ (-1)^{j+1}(1-q^{2j-1})z^jq^{\frac{j(j-3)}{2}+1}}
	{\aqprod{q}{q}{n+j}\aqprod{q}{q}{n-j+1}}
.
\end{align}

We recall a limiting case of Bailey's Lemma \cite{Bailey} gives that if $(\alpha,\beta)$ is a Bailey 
pair relative to $(a,q)$ then
\begin{align*}
	\sum_{n=0}^\infty \aqprod{\rho_1,\rho_2}{q}{n} 
		\left(\frac{aq}{\rho_1\rho_2} \right)^n \beta_n
	&=
	\frac{\aqprod{aq/\rho_1,aq/\rho_2}{q}{\infty}}{\aqprod{aq,aq/(\rho_1\rho_2)}{q}{\infty}}
	\sum_{n=0}^\infty \frac{
		\aqprod{\rho_1,\rho_2}{q}{n} 
		\left(\frac{aq}{\rho_1\rho_2} \right)^n \alpha_n
		}{\aqprod{aq/\rho_1,aq/\rho_2}{q}{n}}	
.  
\end{align*}
For one of the variants of Bailey's Lemma, we need the conjugate
Bailey pair in the following lemma. We recall that a pair of sequences 
$(\delta,\gamma)$ is a conjugate Bailey pair relative to $(a,q)$ if
\begin{align*}
	\gamma_n &= \sum_{j=n}^\infty \frac{\delta_j}{\aqprod{q}{q}{j-n}\aqprod{aq}{q}{j+n}}
.
\end{align*}
Different conjugate Bailey pairs give rise to different variants of Bailey's Lemma 
because Bailey's Transform states that if $(\alpha,\beta)$ is a Bailey pair
relative to $(a,q)$ and $(\delta,\gamma)$ is a conjugate Bailey pair relative
to $(a,q)$ then
\begin{align*}
	\sum_{n=0}^\infty \beta_n\delta_n &= \sum_{n=0}^\infty \alpha_n\gamma_n
.
\end{align*}
\begin{lemma}\label{LemmaNewConjugateBaileyPairs}
The following is a conjugate Bailey pair relative to 
$(a,q)$,
\begin{align*}
	\delta^1_n
	&=
	\aqprod{z\sqrt{a}q^{-\frac{1}{2}}, z^{-1}\sqrt{a}q^{-\frac{1}{2}}}{q}{n}q^n
	,\\
	\gamma^1_n
	&=
	\frac{q^n \aqprod{z\sqrt{a}q^{-\frac{1}{2}}, z^{-1}\sqrt{a}q^{-\frac{1}{2}}}{q}{\infty}
			( 1- (z+z^{-1})\sqrt{a}q^{n+\frac{1}{2}} + aq^{2n})}
		{\aqprod{q,aq}{q}{\infty}(1-z\sqrt{a}q^{n-\frac{1}{2}})
		(1-z\sqrt{a}q^{n+\frac{1}{2}})(1-z^{-1}\sqrt{a}q^{n-\frac{1}{2}})
		(1-z^{-1}\sqrt{a}q^{n+\frac{1}{2}})}
.
\end{align*}
With $z=\omega$, a primitive third root of unity, and $a\not=q$ this
conjugate Bailey pair becomes
\begin{align*}
	\delta^2_n
	&=
	\frac{\aqprod{a^{\frac{3}{2}}q^{-\frac{3}{2}}}{q^3}{n}q^n}
		{\aqprod{\sqrt{a}q^{-\frac{1}{2}}}{q}{n}}
	,
	&\gamma^2_n
	&=
	\frac{q^n \aqprod{a^{\frac{3}{2}}q^{-\frac{3}{2}}}{q^3}{\infty}
		(1-\sqrt{a}q^{n-\frac{1}{2}})
		(1-\sqrt{a}q^{n+\frac{1}{2}})
		\left( 1+ \sqrt{a}q^{n+\frac{1}{2}} + aq^{2n} \right)
	}
	{\aqprod{q,aq,\sqrt{a}q^{-\frac{1}{2}}}{q}{\infty}
		(1-a^{\frac{3}{2}}q^{3n-\frac{3}{2}})(1-a^{\frac{3}{2}}q^{3n+\frac{3}{2}})}
	.
\end{align*}
\end{lemma}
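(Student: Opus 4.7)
The plan is to verify the first conjugate Bailey pair directly from the definition, and then to deduce the second as a specialization. I would substitute $j = n+k$ into the defining relation $\gamma_n = \sum_{j\ge n}\delta_j/[\aqprod{q}{q}{j-n}\aqprod{aq}{q}{j+n}]$ and use $\aqprod{x}{q}{n+k} = \aqprod{x}{q}{n}\aqprod{xq^n}{q}{k}$ together with $\aqprod{aq}{q}{2n+k} = \aqprod{aq}{q}{2n}\aqprod{aq^{2n+1}}{q}{k}$ to factor out the $k=0$ pieces. This reduces the identity to evaluating
\begin{align*}
S_n = \sum_{k\ge 0} \frac{\aqprod{A,B}{q}{k} q^k}{\aqprod{q, ABq^2}{q}{k}}, \qquad A = z\sqrt{a}q^{n-1/2},\ B = z^{-1}\sqrt{a}q^{n-1/2},
\end{align*}
where I used $AB q^2 = aq^{2n+1}$.

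The base ratio $c/(ab)=q^2$ rules out a direct $q$-Gauss summation, so I would apply Heine's first transformation with $(a,b,c,z)=(A,B,ABq^2,q)$ to obtain
\begin{align*}
S_n = \frac{\aqprod{B, Aq}{q}{\infty}}{\aqprod{ABq^2, q}{q}{\infty}}\,{}_2\phi_1(Aq^2,\,q;\,Aq;\,q,\,B).
\end{align*}
The cancellation of $\aqprod{q}{q}{k}$ in the transformed series, together with $\aqprod{Aq^2}{q}{k}/\aqprod{Aq}{q}{k}=(1-Aq^{k+1})/(1-Aq)$, collapses the inner sum to $[(1-B)^{-1}-Aq(1-Bq)^{-1}]/(1-Aq)$, which equals $[1-(A+B)q+ABq]/[(1-Aq)(1-B)(1-Bq)]$. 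Substituting back $A, B$, using $\aqprod{Aq}{q}{\infty}/(1-Aq)=\aqprod{Aq^2}{q}{\infty}$ and $\aqprod{B}{q}{\infty}/[(1-B)(1-Bq)]=\aqprod{Bq^2}{q}{\infty}$, and converting $\aqprod{z^{\pm}\sqrt{a}q^{-1/2}}{q}{n}$ and $\aqprod{aq}{q}{2n}$ into ratios of infinite products via $\aqprod{x}{q}{n}=\aqprod{x}{q}{\infty}/\aqprod{xq^n}{q}{\infty}$, yields the stated closed form for $\gamma^1_n$ after all intermediate infinite products cancel.

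For the second pair, I would simply substitute $z=\omega$ into the first. The cube-root-of-unity identity $\aqprod{\omega x, \omega^{-1}x}{q}{m}=\aqprod{x^3}{q^3}{m}/\aqprod{x}{q}{m}$ (valid for $m\le\infty$), applied with $x=\sqrt{a}q^{-1/2}$, converts $\delta^1_n$ directly into $\delta^2_n$ and handles the Pochhammer factors of $\gamma^1_n$. Meanwhile $\omega+\omega^{-1}=-1$ turns the quadratic numerator factor into $1+\sqrt{a}q^{n+1/2}+aq^{2n}$, and $(1-\omega x)(1-\omega^{-1}x)=(1-x^3)/(1-x)$ applied to each of the two pairs of denominator factors produces $(1-a^{3/2}q^{3n\pm 3/2})/(1-\sqrt{a}q^{n\pm 1/2})$. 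The hypothesis $a\ne q$ is exactly what is needed so that $\aqprod{\sqrt{a}q^{-1/2}}{q}{\infty}$ is nonzero and the resulting expression is well-defined.

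The main obstacle is the closed-form evaluation of $S_n$: once one recognizes that Heine's first transformation moves a $q$ into the numerator parameters so that $\aqprod{q}{q}{k}$ cancels and the remaining sum is essentially a difference of two geometric series, the rest of the argument is routine but notationally heavy bookkeeping with infinite products.
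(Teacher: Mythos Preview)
Your proof is correct and follows essentially the same strategy as the paper: shift the summation index, factor out the $k=0$ pieces to obtain a ${}_2\phi_1$ with argument $q$, and collapse it to two terms via Heine's transformation. The only cosmetic difference is which iterate of Heine you invoke: the paper uses the form ${}_2\phi_1(a,b;c;q,z)=\frac{\aqprod{c/b,bz}{q}{\infty}}{\aqprod{c,z}{q}{\infty}}\,{}_2\phi_1(abz/c,b;bz;q,c/b)$, which with these parameters produces a $q^{-1}$ upper parameter and hence a terminating two-term series, whereas you use the first Heine transformation, which produces a $q$ upper parameter that cancels $\aqprod{q}{q}{k}$ and leaves a difference of two geometric series. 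Either way the resulting two-term expression is the same, and your handling of the $z=\omega$ specialization (via $\aqprod{\omega x,\omega^{-1}x}{q}{m}=\aqprod{x^3}{q^3}{m}/\aqprod{x}{q}{m}$ and $\omega+\omega^{-1}=-1$) is exactly what is needed and is more explicit than the paper, which simply states the specialized pair.
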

\begin{proof}
We are to show that 
\begin{align*}
	&\frac{q^n \aqprod{z\sqrt{a}q^{-\frac{1}{2}}, z^{-1}\sqrt{a}q^{-\frac{1}{2}}}{q}{\infty}
		( 1- (z+z^{-1}) \sqrt{a}q^{n+\frac{1}{2}} + aq^{2n} )}
		{\aqprod{q,aq}{q}{\infty}(1-z\sqrt{a}q^{n-\frac{1}{2}})
		(1-z\sqrt{a}q^{n+\frac{1}{2}})(1-z^{-1}\sqrt{a}q^{n-\frac{1}{2}})
		(1-z^{-1}\sqrt{a}q^{n+\frac{1}{2}})}
	\\
	&=
	\sum_{j=n}^\infty 
		\frac{ \aqprod{z\sqrt{a}q^{-\frac{1}{2}}, z^{-1}\sqrt{a}q^{-\frac{1}{2}}}{q}{j}q^j }
		{\aqprod{q}{q}{j-n}\aqprod{aq}{q}{j+n}}
.
\end{align*}
Other than elementary rearrangements, we only need Heine's Transformation, which
can be found as Corollary 2.3 in \cite{AndrewsBook}.
We recall Heine's Transformation is
\begin{align*}
	\TwoPhiOne{a}{b}{c}{q}{z}
	&=
	\frac{\aqprod{c/b,bz}{q}{\infty}}{\aqprod{c,z}{q}{\infty}}
	\TwoPhiOne{abz/c}{b}{bz}{q}{c/b}
.
\end{align*}
We have
\begin{align*}
	&\sum_{j=n}^\infty 
		\frac{ \aqprod{z\sqrt{a}q^{-\frac{1}{2}}, z^{-1}\sqrt{a}q^{-\frac{1}{2}}}{q}{j}q^j }
		{\aqprod{q}{q}{j-n}\aqprod{aq}{q}{j+n}}
	\\
	&=
	\sum_{j=0}^\infty 
		\frac{ \aqprod{z\sqrt{a}q^{-\frac{1}{2}}, z^{-1}\sqrt{a}q^{-\frac{1}{2}}}{q}{j+n}q^{j+n} }
		{\aqprod{q}{q}{j}\aqprod{aq}{q}{j+2n}}
	\\
	&=
	\frac{q^n \aqprod{z\sqrt{a}q^{-\frac{1}{2}}, z^{-1}\sqrt{a}q^{-\frac{1}{2}}}{q}{n} }
		{\aqprod{aq}{q}{2n}}
	\sum_{j=0}^\infty 
		\frac{ \aqprod{z\sqrt{a}q^{n-\frac{1}{2}}, z^{-1}\sqrt{a}q^{n-\frac{1}{2}}}{q}{j}q^{j} }
		{\aqprod{q}{q}{j}\aqprod{aq^{2n+1}}{q}{j}}
	\\
	&=
	\frac{q^n \aqprod{z\sqrt{a}q^{-\frac{1}{2}}, z^{-1}\sqrt{a}q^{-\frac{1}{2}}}{q}{n} }
		{\aqprod{aq}{q}{2n}}
	\TwoPhiOne{z\sqrt{a}q^{n-\frac{1}{2}}}{z^{-1}\sqrt{a}q^{n-\frac{1}{2}}}{aq^{2n+1}}{q}{q}
	\\
	&=
	\frac{q^n \aqprod{z\sqrt{a}q^{-\frac{1}{2}}, z^{-1}\sqrt{a}q^{-\frac{1}{2}}}{q}{n} }
		{\aqprod{aq}{q}{2n}}
	\frac{\aqprod{z\sqrt{a}q^{n+\frac{3}{2}}, z^{-1}\sqrt{a}q^{n+\frac{1}{2}}}{q}{\infty}}
		{\aqprod{aq^{2n+1},q}{q}{\infty}}
	\TwoPhiOne{q^{-1}}{z^{-1}\sqrt{a}q^{n-\frac{1}{2}}}{z^{-1}\sqrt{a}q^{n+\frac{1}{2}}}{q}
		{z\sqrt{a}q^{n+\frac{3}{2}}}
	\\
	&=
	\frac{q^n \aqprod{z\sqrt{a}q^{-\frac{1}{2}}, z^{-1}\sqrt{a}q^{-\frac{1}{2}}}{q}{\infty}}
		{\aqprod{aq,q}{q}{\infty}(1-z\sqrt{a}q^{n-\frac{1}{2}})
			(1-z\sqrt{a}q^{n+\frac{1}{2}})(1-z^{-1}\sqrt{a}q^{n-\frac{1}{2}})}
	\left(
		1+ 
		\frac{(1-q^{-1})(1-z^{-1}\sqrt{a}q^{n-\frac{1}{2}})z\sqrt{a}q^{n+\frac{3}{2}}}
		{(1-q)(1-z^{-1}\sqrt{a}q^{n+\frac{1}{2}})}
	\right)
	\\
	&=
	\frac{q^n \aqprod{z\sqrt{a}q^{-\frac{1}{2}}, z^{-1}\sqrt{a}q^{-\frac{1}{2}}}{q}{\infty}
		\left( 1-  (z+z^{-1})\sqrt{a}q^{n+\frac{1}{2}} + aq^{2n} \right)	
		}
		{\aqprod{aq,q}{q}{\infty}(1-z\sqrt{a}q^{n-\frac{1}{2}})
		(1-z\sqrt{a}q^{n+\frac{1}{2}})(1-z^{-1}\sqrt{a}q^{n-\frac{1}{2}})
		(1-z^{-1}\sqrt{a}q^{n+\frac{1}{2}})}
.
\end{align*}

\end{proof}

\begin{lemma}
If $(\alpha,\beta)$ is a Bailey pair relative to $(a,q)$ then
\begin{align}
	\label{LemmaBailey1}
	&\sum_{n=0}^\infty 
	\aqprod{\sqrt{a}}{q}{n}(-1)^n a^{\frac{n}{2}}q^{\frac{n(n+1)}{2}}\beta_n
	=
	\frac{\aqprod{\sqrt{a}q}{q}{\infty}}{\aqprod{aq}{q}{\infty}}
	\sum_{n=0}^\infty
	\frac{(1-\sqrt{a})(-1)^na^{\frac{n}{2}}q^{\frac{n(n+1)}{2}}\alpha_n}{(1-\sqrt{a}q^n)}
	,\\
	\label{LemmaBailey2}
	&\sum_{n=0}^\infty 
	\aqprod{\sqrt{aq}}{q}{n}(-1)^n a^{\frac{n}{2}}q^{\frac{n^2}{2}}\beta_n
	=
	\frac{\aqprod{\sqrt{aq}}{q}{\infty}}{\aqprod{aq}{q}{\infty}}
	\sum_{n=0}^\infty
	(-1)^na^{\frac{n}{2}}q^{\frac{n^2}{2}}\alpha_n
	,\\
	\label{LemmaBailey3}
	&\sum_{n=0}^\infty
	\aqprod{z\sqrt{a}q^{-\frac{1}{2}}, z^{-1}\sqrt{a}q^{-\frac{1}{2}}}{q}{n}q^n\beta_n
		\nonumber\\
		&=
		\frac{\aqprod{z\sqrt{a}q^{-\frac{1}{2}}, z^{-1}\sqrt{a}q^{-\frac{1}{2}}}{q}{\infty}}
		{\aqprod{q,aq}{q}{\infty}}
		\sum_{n=0}^\infty
		\frac{ (1- (z+z^{-1})\sqrt{a}q^{n+\frac{1}{2}} + aq^{2n})q^n \alpha_n}
		{(1-z\sqrt{a}q^{n-\frac{1}{2}}) (1-z\sqrt{a}q^{n+\frac{1}{2}})
		(1-z^{-1}\sqrt{a}q^{n-\frac{1}{2}}) (1-z^{-1}\sqrt{a}q^{n+\frac{1}{2}})}
	,\\
	\label{LemmaBailey4}
	&\sum_{n=0}^\infty
	\aqprod{z\sqrt{a}q^{-\frac{1}{2}}, z^{-1}\sqrt{a}q^{-\frac{1}{2}}}{q}{n}q^{2n}\beta_n
		\nonumber\\
		&=
		\frac{\aqprod{z\sqrt{a}q^{-\frac{1}{2}}, z^{-1}\sqrt{a}q^{-\frac{1}{2}}}{q}{\infty}}
		{\aqprod{q,aq}{q}{\infty}}	
		%\\&\quad\times
		\sum_{n=0}^\infty
		\frac{(1-q)q^{2n}  \alpha_n}
		{
		(1-z\sqrt{a}q^{n-\frac{1}{2}})(1-z\sqrt{a}q^{n+\frac{1}{2}})
		(1-z^{-1}\sqrt{a}q^{n-\frac{1}{2}})(1-z^{-1}\sqrt{a}q^{n+\frac{1}{2}})
		}
	,\\
	\label{LemmaBailey5}
	&\sum_{n=0}^\infty
	\frac{\aqprod{a^{\frac{3}{2}}q^{-\frac{3}{2}}}{q^3}{n}q^n\beta_n}
		{\aqprod{\sqrt{a}q^{-\frac{1}{2}}}{q}{n}}
		=
		\frac{\aqprod{a^{\frac{3}{2}}q^{-\frac{3}{2}}}{q^3}{\infty}}
			{\aqprod{q,aq,\sqrt{a}q^{-\frac{1}{2}}}{q}{\infty}}
		\sum_{n=0}^\infty
		\frac{
			(1-\sqrt{a}q^{n-\frac{1}{2}})(1-\sqrt{a}q^{n+\frac{1}{2}})
			( 1+ \sqrt{a}q^{n+\frac{1}{2}} + aq^{2n} )q^n \alpha_n}
		{(1-a^{\frac{3}{2}}q^{3n-\frac{3}{2}})(1-a^{\frac{3}{2}}q^{3n+\frac{3}{2}})}
	,\\
	\label{LemmaBailey6}
	&\sum_{n=0}^\infty	
	\frac{\aqprod{a^{\frac{3}{2}}q^{-\frac{3}{2}}}{q^3}{n}q^{2n}\beta_n}
		{\aqprod{\sqrt{a}q^{-\frac{1}{2}}}{q}{n}}
		=
		\frac{\aqprod{a^{\frac{3}{2}}q^{-\frac{3}{2}}}{q^3}{\infty}}
		{\aqprod{q,aq,\sqrt{a}q^{-\frac{1}{2}}}{q}{\infty}}
		\sum_{n=0}^\infty
		\frac{ (1-q)(1-\sqrt{a}q^{n-\frac{1}{2}})(1-\sqrt{a}q^{n+\frac{1}{2}})q^{2n}\alpha_n}
		{(1-a^{\frac{3}{2}}q^{3n-\frac{3}{2}})(1-a^{\frac{3}{2}}q^{3n+\frac{3}{2}})}
.
\end{align}
If $(\alpha,\beta)$ is a Bailey pair relative to $(a,q^2)$ then
\begin{align}
	\label{LemmaBailey7}
	&\sum_{n=0}^\infty
	\aqprod{-\sqrt{a}}{q}{2n}q^n\beta_n
		=
		\frac{\aqprod{-\sqrt{a}q}{q}{\infty}}{\aqprod{aq^2,q}{q^2}{\infty}}
		\sum_{n=0}^\infty \frac{(1+\sqrt{a})q^n\alpha_n}{(1+\sqrt{a}q^{2n})}
.
\end{align}
\end{lemma}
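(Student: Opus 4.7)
The plan is to derive each of the seven identities either by a direct specialization of the limiting form of Bailey's Lemma recalled just above the lemma---with appropriate choices of $\rho_1$ and $\rho_2$---or by applying Bailey's Transform $\sum_n \beta_n\delta_n = \sum_n \alpha_n\gamma_n$ to the conjugate Bailey pairs $(\delta^1,\gamma^1)$ and $(\delta^2,\gamma^2)$ already established in Lemma \ref{LemmaNewConjugateBaileyPairs}. Once the general framework is fixed, the real work is just careful bookkeeping of how Pochhammer symbols in the prefactor cancel against those in the summand.

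For (\ref{LemmaBailey1}) and (\ref{LemmaBailey2}) I would take $\rho_1=\sqrt{a}$, respectively $\rho_1=\sqrt{aq}$, in Bailey's Lemma and let $\rho_2\to\infty$, using the standard limit $\aqprod{\rho_2}{q}{n}/\rho_2^n \to (-1)^n q^{n(n-1)/2}$ together with $aq/\rho_2\to 0$; the ratios $\aqprod{\sqrt{a}}{q}{n}/\aqprod{\sqrt{a}q}{q}{n}$ and $\aqprod{\sqrt{aq}}{q}{n}/\aqprod{\sqrt{aq}}{q}{n}$ then collapse to $(1-\sqrt{a})/(1-\sqrt{a}q^n)$ and to $1$, matching the stated right-hand sides. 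For (\ref{LemmaBailey4}) I would instead take $\rho_1 = z\sqrt{a}q^{-1/2}$, $\rho_2 = z^{-1}\sqrt{a}q^{-1/2}$, noting $aq/(\rho_1\rho_2) = q^2$ (which is exactly what produces the weight $q^{2n}$) and $aq/\rho_j = z^{\mp 1}\sqrt{a}q^{3/2}$; the four linear factors $(1-z^{\pm 1}\sqrt{a}q^{n\pm 1/2})$ in the denominator of the summand emerge from the ratios $\aqprod{z^{\pm 1}\sqrt{a}q^{-1/2}}{q}{n}/\aqprod{z^{\mp 1}\sqrt{a}q^{3/2}}{q}{n}$ after these absorb the matching infinite products in the prefactor, and the factor $(1-q)$ arises from $\aqprod{q^2}{q}{\infty} = \aqprod{q}{q}{\infty}/(1-q)$. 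The same scheme, now applied to a Bailey pair relative to $(a,q^2)$ with $\rho_1 = -\sqrt{a}$ and $\rho_2 = -\sqrt{a}q$, yields (\ref{LemmaBailey7}); the identification $\aqprod{-\sqrt{a}}{q}{2n} = \aqprod{-\sqrt{a},-\sqrt{a}q}{q^2}{n}$ matches the left-hand side, and the choice gives $aq^2/(\rho_1\rho_2) = q$, hence the weight $q^n$, while the Pochhammer ratio telescopes to $(1+\sqrt{a})/(1+\sqrt{a}q^{2n})$.

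For (\ref{LemmaBailey3}) and (\ref{LemmaBailey5}) I would simply insert the pairs $(\delta^1,\gamma^1)$ and $(\delta^2,\gamma^2)$ from Lemma \ref{LemmaNewConjugateBaileyPairs} into Bailey's Transform; the identities then appear by inspection. Identity (\ref{LemmaBailey6}) follows from (\ref{LemmaBailey4}) by specializing $z=\omega$, a primitive cube root of unity, using $\aqprod{\omega x,\omega^{-1}x}{q}{n} = \aqprod{x^3}{q^3}{n}/\aqprod{x}{q}{n}$ together with $(1-\omega y)(1-\omega^{-1}y)(1-y) = 1-y^3$ (applied with $x = \sqrt{a}q^{-1/2}$ and $y = \sqrt{a}q^{n\pm 1/2}$); every factor involving $\omega$ then disappears and each denominator pair collapses to $(1-a^{3/2}q^{3n\pm 3/2})/(1-\sqrt{a}q^{n\pm 1/2})$.

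The main obstacle, conceptually, lies with the conjugate Bailey pairs $(\delta^1,\gamma^1)$ and $(\delta^2,\gamma^2)$ themselves, but this has already been handled in Lemma \ref{LemmaNewConjugateBaileyPairs} via Heine's transformation; once those pairs are in hand, the proof of the present lemma reduces to the routine if painstaking Pochhammer-symbol cancellations described above.
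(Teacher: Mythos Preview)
Your proposal is correct and follows essentially the same approach as the paper: the paper derives (\ref{LemmaBailey1}), (\ref{LemmaBailey2}), (\ref{LemmaBailey4}), and (\ref{LemmaBailey7}) from Bailey's Lemma with exactly the specializations of $\rho_1,\rho_2$ you describe, obtains (\ref{LemmaBailey3}) and (\ref{LemmaBailey5}) from Bailey's Transform with the conjugate pairs of Lemma~\ref{LemmaNewConjugateBaileyPairs}, and gets (\ref{LemmaBailey6}) by setting $z=\omega$ in (\ref{LemmaBailey4}). Your write-up in fact supplies more of the Pochhammer bookkeeping than the paper does.
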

\begin{proof}
Equation (\ref{LemmaBailey1}) follows from Bailey's Lemma by letting
$\rho_1=\sqrt{a}$ and $\rho_2\rightarrow\infty$. Equation
(\ref{LemmaBailey2}) follows from Bailey's Lemma by letting
$\rho_1=\sqrt{aq}$ and $\rho_2\rightarrow\infty$.
Equation (\ref{LemmaBailey7}) follows from Bailey's Lemma by letting
$q\mapsto q^2$, $\rho_1=-\sqrt{a}$, and $\rho_2=-q\sqrt{a}$.
Equations (\ref{LemmaBailey3}) and (\ref{LemmaBailey5}) 
are Bailey's
Transform with the conjugate Bailey pairs of Lemma 
\ref{LemmaNewConjugateBaileyPairs}.
Equation (\ref{LemmaBailey4}) follows from Bailey's Lemma by letting
$\rho_1 = z\sqrt{a}q^{-\frac{1}{2}}$ and $\rho_2 = z^{-1}\sqrt{a}q^{-\frac{1}{2}}$.
Lastly, equation (\ref{LemmaBailey6}) follows by letting
$z=\omega$, a primitive third root of unity, in  (\ref{LemmaBailey4}).
\end{proof}

We use the following Bailey pairs relative to $(a,q)$,
\begin{align}\label{FirstBaileyPair}
	\beta^*_n(a,q) &= \frac{1}{\aqprod{aq,q}{q}{n}}
	,
	&\alpha^*_n(a,q) &= \PieceTwo{1}{0}{n=0}{n\ge 1}
	,
	\\
	\label{SecondBaileyPair}
	\beta^{**}_n(a,q) &= \frac{1}{\aqprod{aq^2,q}{q}{n}}
	,
	&\alpha^{**}_n(a,q) &= 
		\left\{
   		\begin{array}{ll}
      		1 & n=0 \\
       		-aq & n=1 \\
				0 & n\ge 2 
     		\end{array}
		\right.
.
\end{align}
That these are Bailey pairs relative to $(a,q)$ follows immediately from the
definition of a Bailey pair.

%%%%%%%%%%%%%series proofs start here%%%%%%%%%%%%%%%%%%%%%%%%%%%%%%%%%%%%%

%%%%%%%%%%%%%%%%%%%%%%%%%%%%%%%%%%%%%%%%%%%%%%%%%%%%%%%%%%%%%%%%%%%%%%%%%%%%%%%
%%%%%%%%%%%%%%%%%%%%%%BEGIN PROOF FOR J2%%%%%%%%%%%%%%%%%%%%%%%%%%%%%%%%%%%%%%%
%%%%%%%%%%%%%%%%%%%%%%%%%%%%%%%%%%%%%%%%%%%%%%%%%%%%%%%%%%%%%%%%%%%%%%%%%%%%%%%
\begin{proof}[Proof of (\ref{TheoremSeriesForJ2})]
By (\ref{GarvanProp41}) we have that
\begin{align*}
	(1+z)\aqprod{z,z^{-1}}{q}{\infty}S_{J2}(z,q)
	&=
		\frac{\aqprod{q}{q}{\infty}^2}{\aqprod{q^3}{q^3}{\infty}}
		\sum_{n=1}^\infty 
		\frac{\aqprod{q^3}{q^3}{n-1}q^n(1+z)\aqprod{z,z^{-1}}{q}{n}}
			{\aqprod{q}{q}{2n}\aqprod{q}{q}{n-1}}
	\\
	&=
		\frac{\aqprod{q}{q}{\infty}^2}{\aqprod{q^3}{q^3}{\infty}}
		\sum_{n=1}^\infty 
		\frac{\aqprod{q^3}{q^3}{n-1}q^n}{\aqprod{q}{q}{n-1}}
		\sum_{j=-n}^{n+1}
		\frac{(-1)^{j+1}(1-q^{2j-1})z^jq^{\frac{j(j-3)}{2}+1}}
			{\aqprod{q}{q}{n+j}\aqprod{q}{q}{n-j+1}}
	.
\end{align*}
We note the coefficients of $z^{-j}$ and $z^{j+1}$ are then the same in
$(1+z)\aqprod{z,z^{-1}}{q}{\infty}S_{J2}(z,q)$, so we need only determine
the coefficients of $z^j$ for $j\ge 1$.
For $j\ge 2$ we see the coefficient of $z^j$ in 
$(1+z)\aqprod{z,z^{-1}}{q}{\infty}S_{J2}(z,q)$ is given by
\begin{align*}
	&\frac{\aqprod{q}{q}{\infty}^2}{\aqprod{q^3}{q^3}{\infty}}
	\sum_{n=j-1}^\infty 
	\frac{\aqprod{q^3}{q^3}{n-1} 
		(-1)^{j+1}(1-q^{2j-1})q^{n+\frac{j(j-3)}{2}+1}}
		{\aqprod{q}{q}{n-1}\aqprod{q}{q}{n+j}\aqprod{q}{q}{n-j+1}}
	\\
	&=
	\frac{\aqprod{q}{q}{\infty}^2}{\aqprod{q^3}{q^3}{\infty}}
	\sum_{n=0}^\infty 
	\frac{\aqprod{q^3}{q^3}{n+j-2} 
		(-1)^{j+1}(1-q^{2j-1})q^{n+\frac{j(j-1)}{2}}}
		{\aqprod{q}{q}{n+j-2}\aqprod{q}{q}{n+2j-1}\aqprod{q}{q}{n}}
	\\	
	&=
	\frac{\aqprod{q}{q}{\infty}^2(-1)^{j+1}\aqprod{q^3}{q^3}{j-2}(1-q^{2j-1})q^{\frac{j(j-1)}{2}}}
		{\aqprod{q^3}{q^3}{\infty}\aqprod{q}{q}{j-2}\aqprod{q}{q}{2j-1}}
	\sum_{n=0}^\infty 
	\frac{\aqprod{q^{3j-3}}{q^3}{n} q^n}
		{\aqprod{q^{j-1}}{q}{n}\aqprod{q^{2j}}{q}{n}\aqprod{q}{q}{n}}
	\\	
	&=
	\frac{\aqprod{q}{q}{\infty}^2(-1)^{j+1}\aqprod{q^3}{q^3}{j-2}(1-q^{2j-1})q^{\frac{j(j-1)}{2}}}
		{\aqprod{q^3}{q^3}{\infty}\aqprod{q}{q}{j-2}\aqprod{q}{q}{2j-1}}
	\sum_{n=0}^\infty
	\frac{\aqprod{q^{3j-3}}{q^3}{n} q^n\beta^*_n(q^{2j-1},q)}
		{\aqprod{q^{j-1}}{q}{n}}
	.
\end{align*}
We now apply (\ref{LemmaBailey5}) so that the coefficient of $z^j$ in 
$(1+z)\aqprod{z,z^{-1}}{q}{\infty}S_{J2}(z,q)$ is given by
\begin{align*}	
	&
	\frac{\aqprod{q}{q}{\infty}^2(-1)^{j+1}\aqprod{q^3}{q^3}{j-2}(1-q^{2j-1})q^{\frac{j(j-1)}{2}}
		\aqprod{q^{3j-3}}{q^3}{\infty}(1-q^{j-1})(1-q^j)(1+q^j+q^{2j-1})}
		{\aqprod{q^3}{q^3}{\infty}\aqprod{q}{q}{j-2}\aqprod{q}{q}{2j-1}
			\aqprod{q,q^{2j},q^{j-1}}{q}{\infty}(1-q^{3j-3})(1-q^{3j})
		}
	\\	
	&=
	\frac{(-1)^{j+1}(1-q^{2j-1})q^{\frac{j(j-1)}{2}} (1-q^{j-1})(1-q^j)(1+q^j+q^{2j-1})}
		{\aqprod{q}{q}{\infty}(1-q^{3j-3})(1-q^{3j})}
	\\	
	&=
	\frac{(-1)^{j+1}(1-q^{2j-1})q^{\frac{j(j-1)}{2}} (1+q^j+q^{2j-1})}
		{\aqprod{q}{q}{\infty}(1-\omega q^{j-1})(1-\omega^{-1}q^{j-1})(1-\omega q^j)(1-\omega^{-1}q^j)}
.
\end{align*}
The calculations are similar for the coefficient of $z$, except that
we use (\ref{LemmaBailey3}). In particular, we have
that the coefficient of $z$ in 
$(1+z)\aqprod{z,z^{-1}}{q}{\infty}S_{J2}(z,q)$
is given by
\begin{align*}
	&\frac{\aqprod{q}{q}{\infty}^2}{\aqprod{q^3}{q^3}{\infty}}
	\sum_{n=1}^\infty 
	\frac{\aqprod{q^3}{q^3}{n-1}q^{n}(1-q)}
		{\aqprod{q}{q}{n-1}\aqprod{q}{q}{n+1}\aqprod{q}{q}{n}}
	\\
	&=\frac{\aqprod{q}{q}{\infty}^2}{\aqprod{q^3}{q^3}{\infty}}
	\sum_{n=1}^\infty 
	\frac{\aqprod{\omega q, \omega^{-1}q}{q}{n-1}q^n(1-q)}
		{\aqprod{q}{q}{n+1}\aqprod{q}{q}{n}}
	\\
	&=
	\frac{\aqprod{q}{q}{\infty}^2}{\aqprod{q^3}{q^3}{\infty}}
	\sum_{n=0}^\infty 
	\frac{\aqprod{\omega q, \omega^{-1}q}{q}{n-1}q^n(1-q)}
		{\aqprod{q}{q}{n+1}\aqprod{q}{q}{n}}
	-\frac{\aqprod{q}{q}{\infty}^2}{3\aqprod{q^3}{q^3}{\infty}}
	\\
	&=
	\frac{\aqprod{q}{q}{\infty}^2}{\aqprod{q^3}{q^3}{\infty}(1-\omega)(1-\omega^{-1})}
	\sum_{n=0}^\infty 
	\frac{\aqprod{\omega, \omega^{-1}}{q}{n}q^n}
		{\aqprod{q^2}{q}{n}\aqprod{q}{q}{n}}
	-\frac{\aqprod{q}{q}{\infty}^2}{3\aqprod{q^3}{q^3}{\infty}}
	\\
	&=
	\frac{\aqprod{q}{q}{\infty}^2}{\aqprod{q^3}{q^3}{\infty}(1-\omega)(1-\omega^{-1})}
	\sum_{n=0}^\infty 
	\aqprod{\omega, \omega^{-1}}{q}{n}q^n\beta^*_n(q,q)
	-\frac{\aqprod{q}{q}{\infty}^2}{3\aqprod{q^3}{q^3}{\infty}}
	\\
	&=
	\frac{\aqprod{q}{q}{\infty}^2 \aqprod{\omega,\omega^{-1}}{q}{\infty}(1+2q)}
		{\aqprod{q^3}{q^3}{\infty}\aqprod{q^2,q}{q}{\infty}
		(1-\omega)^2(1-\omega^{-1})^2		
		(1-\omega q)(1-\omega^{-1}q)}
	-\frac{\aqprod{q}{q}{\infty}^2}{3\aqprod{q^3}{q^3}{\infty}}
	\\	
	&=
	\frac{(1-q)(1+2q)}
		{\aqprod{q}{q}{\infty}(1-\omega)(1-\omega^{-1})(1-\omega q)(1-\omega^{-1}q)}
	-\frac{\aqprod{q}{q}{\infty}^2}{3\aqprod{q^3}{q^3}{\infty}}
.
\end{align*}
Thus,
\begin{align*}
	&(1+z)\aqprod{z,z^{-1}}{q}{\infty}S_{J2}(z,q)
	\\
	&=
	-\frac{(1+z)}{3}\frac{\aqprod{q}{q}{\infty}^2}{\aqprod{q^3}{q^3}{\infty}}
	+
	\frac{1}{\aqprod{q}{q}{\infty}}
	\sum_{j=1}^\infty 
	\frac{(z^j+z^{1-j})(-1)^{j+1}q^{\frac{j(j-1)}{2}}(1-q^{2j-1})(1+q^j+q^{2j-1})}  
		{(1-\omega q^{j-1})(1-\omega^{-1}q^{j-1})(1-\omega q^{j})(1-\omega^{-1}q^{j})}
.
\end{align*}
Here setting $z=1$ yields
\begin{align*}
	\frac{1}{3}\frac{\aqprod{q}{q}{\infty}^2}{\aqprod{q^3}{q^3}{\infty}}
	&=
	\frac{1}{\aqprod{q}{q}{\infty}}
	\sum_{j=1}^\infty 
	\frac{(-1)^{j+1}q^{\frac{j(j-1)}{2}}(1-q^{2j-1})(1+q^j+q^{2j-1})}  
		{(1-\omega q^{j-1})(1-\omega^{-1}q^{j-1})(1-\omega q^{j})(1-\omega^{-1}q^{j})}
,
\end{align*}
so in fact
\begin{align*}
	&(1+z)\aqprod{z,z^{-1}}{q}{\infty}S_{J2}(z,q)
	\\
	&=\frac{1}{\aqprod{q}{q}{\infty}}
	\sum_{j=1}^\infty 
	\frac{(1-z^j)(1-z^{j-1})z^{1-j}(-1)^{j+1}q^{\frac{j(j-1)}{2}}(1-q^{2j-1})(1+q^j+q^{2j-1})}  
		{(1-\omega q^{j-1})(1-\omega^{-1}q^{j-1})(1-\omega q^{j})(1-\omega^{-1}q^{j})}
	\\
	&=
	\frac{1}{\aqprod{q}{q}{\infty}}
	\sum_{j=2}^\infty 
	\frac{(1-z^j)(1-z^{j-1})z^{1-j}(-1)^{j+1}q^{\frac{j(j-1)}{2}}(1-q^{2j-1})(1+q^j+q^{2j-1})}  
		{(1-\omega q^{j-1})(1-\omega^{-1}q^{j-1})(1-\omega q^{j})(1-\omega^{-1}q^{j})}
	\\
	&=
	\frac{1}{\aqprod{q}{q}{\infty}}
	\sum_{j=2}^\infty 
	\frac{(1-z^j)(1-z^{j-1})z^{1-j}(-1)^{j+1}q^{\frac{j(j-1)}{2}}(1-q^{2j-1})(1+q^j+q^{2j-1})
		(1-q^{j-1})(1-q^j)}  
		{(1-q^{3j-3})(1-q^{3j})}
	\\
	&=
	\frac{1}{\aqprod{q}{q}{\infty}}
	\sum_{j=2}^\infty 
	\frac{(1-z^j)(1-z^{j-1})z^{1-j}(-1)^{j+1}q^{\frac{j(j-1)}{2}}( 1-q^{j-1}-q^{2j}+q^{4j-1}+q^{5j-3}-q^{6j-3})}  
		{(1-q^{3j-3})(1-q^{3j})}
.
\end{align*}
\end{proof}
%%%%%%%%%%%%%%%%%%%%%%%%%%%%%%%%%%%%%%%%%%%%%%%%%%%%%%%%%%%%%%%%%%%%%%%%%%%%%%%
%%%%%%%%%%%%%%%%%%%%%%%%END PROOF FOR J2%%%%%%%%%%%%%%%%%%%%%%%%%%%%%%%%%%%%%%%
%%%%%%%%%%%%%%%%%%%%%%%%%%%%%%%%%%%%%%%%%%%%%%%%%%%%%%%%%%%%%%%%%%%%%%%%%%%%%%%

%%%%%%%%%%%%%%%%%%%%%%%%%%%%%%%%%%%%%%%%%%%%%%%%%%%%%%%%%%%%%%%%%%%%%%%%%%%%%%%
%%%%%%%%%%%%%%%%%%%%%%BEGIN PROOF FOR J3%%%%%%%%%%%%%%%%%%%%%%%%%%%%%%%%%%%%%%%
%%%%%%%%%%%%%%%%%%%%%%%%%%%%%%%%%%%%%%%%%%%%%%%%%%%%%%%%%%%%%%%%%%%%%%%%%%%%%%%
\begin{proof}[Proof of (\ref{TheoremSeriesForJ3})]
By (\ref{GarvanProp41}) we have that
\begin{align*}
	(1+z)\aqprod{z,z^{-1}}{q}{\infty}S_{J3}(z,q)
	&=
		\frac{\aqprod{q}{q}{\infty}^2}{\aqprod{q^3}{q^3}{\infty}}
		\sum_{n=1}^\infty 
		\frac{\aqprod{q^3}{q^3}{n-1}q^{2n}(1+z)\aqprod{z,z^{-1}}{q}{n}}
			{\aqprod{q}{q}{2n}\aqprod{q}{q}{n-1}}
	\\
	&=
		\frac{\aqprod{q}{q}{\infty}^2}{\aqprod{q^3}{q^3}{\infty}}
		\sum_{n=1}^\infty 
		\frac{ \aqprod{q^3}{q^3}{n-1}q^{2n}}{\aqprod{q}{q}{n-1}}
		\sum_{j=-n}^{n+1}
		\frac{(-1)^{j+1}(1-q^{2j-1})z^jq^{\frac{j(j-3)}{2}+1}}
			{\aqprod{q}{q}{n+j}\aqprod{q}{q}{n-j+1}}
	.
\end{align*}
We note the coefficients of $z^{-j}$ and $z^{j+1}$ are then the same in
$(1+z)\aqprod{z,z^{-1}}{q}{\infty}S_{J3}(z,q)$, so we need only determine
the coefficients of $z^j$ for $j\ge 1$.
The proof is now the same as it was for $S_{J2}(z,q)$, except that we use
(\ref{LemmaBailey6}).
For $j\ge 2$ we see the coefficient of $z^j$ in 
$(1+z)\aqprod{z,z^{-1}}{q}{\infty}S_{J3}(z,q)$ is given by
\begin{align*}
	&\frac{\aqprod{q}{q}{\infty}^2}{\aqprod{q^3}{q^3}{\infty}}
	\sum_{n=j-1}^\infty 
	\frac{\aqprod{q^3}{q^2}{n-1} 
		(-1)^{j+1}(1-q^{2j-1})q^{2n+\frac{j(j-3)}{2}+1}}
		{\aqprod{q}{q}{n-1}\aqprod{q}{q}{n+j}\aqprod{q}{q}{n-j+1}}
	\\
	&=
	\frac{\aqprod{q}{q}{\infty}^2}{\aqprod{q^3}{q^3}{\infty}}
	\sum_{n=0}^\infty 
	\frac{\aqprod{q^3}{q^3}{n+j-2} 
		(-1)^{j+1}(1-q^{2j-1})q^{2n+\frac{j(j+1)}{2} -1}}
		{\aqprod{q}{q}{n+j-2}\aqprod{q}{q}{n+2j-1}\aqprod{q}{q}{n}}
	\\	
	&=
	\frac{\aqprod{q}{q}{\infty}^2(-1)^{j+1}\aqprod{q^3}{q^3}{j-2}(1-q^{2j-1})q^{\frac{j(j+1)}{2}-1}}
		{\aqprod{q^3}{q^3}{\infty}\aqprod{q}{q}{j-2}\aqprod{q}{q}{2j-1}}
	\sum_{n=0}^\infty 
	\frac{\aqprod{q^{3j-3}}{q^3}{n} q^{2n}}
		{\aqprod{q^{j-1}}{q}{n}\aqprod{q^{2j}}{q}{n}\aqprod{q}{q}{n}}
	\\	
	&=
	\frac{\aqprod{q}{q}{\infty}^2(-1)^{j+1}\aqprod{q^3}{q^3}{j-2}(1-q^{2j-1})q^{\frac{j(j+1)}{2}-1}}
		{\aqprod{q^3}{q^3}{\infty}\aqprod{q}{q}{j-2}\aqprod{q}{q}{2j-1}}
	\sum_{n=0}^\infty 
	\frac{\aqprod{q^{3j-3}}{q^3}{n} q^{2n}\beta^*_n(q^{2j-1},q)}
		{\aqprod{q^{j-1}}{q}{n}}
	\\
	&=
		\frac{\aqprod{q}{q}{\infty}^2(-1)^{j+1}\aqprod{q^3}{q^3}{j-2}
			(1-q^{2j-1})q^{\frac{j(j+1)}{2} -1}
			\aqprod{q^{3j-3}}{q^3}{\infty}(1-q)(1-q^{j-1})(1-q^{j})
		}
		{\aqprod{q^3}{q^3}{\infty}	\aqprod{q}{q}{j-2}\aqprod{q}{q}{2j-1}
			\aqprod{q,q^{2j},q^{j-1}}{q}{\infty}(1-q^{3j-3})(1-q^{3j})}
	\\
	&=
	\frac{(-1)^{j+1}(1-q^{2j-1})(1-q)(1-q^{j-1})(1-q^j)q^{\frac{j(j+1)}{2} -1}}
	{\aqprod{q}{q}{\infty}(1-q^{3j-3})(1-q^{3j})}
	\\
	&=
	\frac{(-1)^{j+1}(1-q^{2j-1})(1-q)q^{\frac{j(j+1)}{2} -1}}
	{\aqprod{q}{q}{\infty}(1-\omega q^{j-1})(1-\omega^{-1}q^{j-1})
		(1-\omega q^{j})(1-\omega q^{j-1})  }
.
\end{align*}
The calculations are similar for the coefficient of $z$, but we
use (\ref{LemmaBailey4}). In particular, we have
that the coefficient of $z$ in 
$(1+z)\aqprod{z,z^{-1}}{q}{\infty}S_{J3}(z,q)$
is given by
\begin{align*}
	&\frac{\aqprod{q}{q}{\infty}^2}{\aqprod{q^3}{q^3}{\infty}}
	\sum_{n=1}^\infty 
	\frac{\aqprod{q^3}{q^3}{n-1}q^{2n}(1-q)}
		{\aqprod{q}{q}{n-1}\aqprod{q}{q}{n+1}\aqprod{q}{q}{n}}
	\\
	&=\frac{\aqprod{q}{q}{\infty}^2}{\aqprod{q^3}{q^3}{\infty}}
	\sum_{n=1}^\infty 
	\frac{\aqprod{\omega q, \omega^{-1}q}{q}{n-1}q^{2n}(1-q)}
		{\aqprod{q}{q}{n+1}\aqprod{q}{q}{n}}
	\\
	&=
	\frac{\aqprod{q}{q}{\infty}^2}{\aqprod{q^3}{q^3}{\infty}}
	\sum_{n=0}^\infty 
	\frac{\aqprod{\omega q, \omega^{-1}q}{q}{n-1}q^{2n}(1-q)}
		{\aqprod{q}{q}{n+1}\aqprod{q}{q}{n}}
	-\frac{\aqprod{q}{q}{\infty}^2}{3\aqprod{q^3}{q^3}{\infty}}
	\\
	&=
	\frac{\aqprod{q}{q}{\infty}^2}{\aqprod{q^3}{q^3}{\infty}(1-\omega)(1-\omega^{-1})}
	\sum_{n=0}^\infty 
	\frac{\aqprod{\omega, \omega^{-1}}{q}{n}q^{2n}}
		{\aqprod{q^2}{q}{n}\aqprod{q}{q}{n}}
	-\frac{\aqprod{q}{q}{\infty}^2}{3\aqprod{q^3}{q^3}{\infty}}
	\\
	&=
	\frac{\aqprod{q}{q}{\infty}^2}{\aqprod{q^3}{q^3}{\infty}(1-\omega)(1-\omega^{-1})}
	\sum_{n=0}^\infty \aqprod{\omega,\omega^{-1}}{q}{n}q^{2n}\beta^*_n(q,q)
	-\frac{\aqprod{q}{q}{\infty}^2}{3\aqprod{q^3}{q^3}{\infty}}
	\\
	&=
	\frac{\aqprod{q}{q}{\infty}^2 \aqprod{\omega,\omega^{-1}}{q}{\infty}(1-q)}
		{\aqprod{q^3}{q^3}{\infty}\aqprod{q,q^2}{q}{\infty}(1-\omega )^2(1-\omega^{-1})^2(1-\omega q)(1-\omega^{-1} q)}
	-\frac{\aqprod{q}{q}{\infty}^2}{3\aqprod{q^3}{q^3}{\infty}}
	\\	
	&=
	\frac{(1-q)^2}
		{\aqprod{q}{q}{\infty}(1-\omega)(1-\omega^{-1})(1-\omega q)(1-\omega^{-1}q)}
	-\frac{\aqprod{q}{q}{\infty}^2}{3\aqprod{q^3}{q^3}{\infty}}
.
\end{align*}
Thus,
\begin{align*}
	&(1+z)\aqprod{z,z^{-1}}{q}{\infty}S_{J3}(z,q)
	\\
	&=
	-\frac{(1+z)}{3}\frac{\aqprod{q}{q}{\infty}^2}{\aqprod{q^3}{q^3}{\infty}}
	+
	\frac{1}{\aqprod{q}{q}{\infty}}
	\sum_{j=1}^\infty 
	\frac{(z^j+z^{1-j})(-1)^{j+1}q^{\frac{j(j+1)}{2}-1}(1-q^{2j-1})(1-q)}  
		{(1-\omega q^{j-1})(1-\omega^{-1}q^{j-1})(1-\omega q^{j})(1-\omega^{-1}q^{j})}
.
\end{align*}
Here setting $z=1$ yields
\begin{align*}
	\frac{1}{3}\frac{\aqprod{q}{q}{\infty}^2}{\aqprod{q^3}{q^3}{\infty}}
	&=
	\frac{1}{\aqprod{q}{q}{\infty}}
	\sum_{j=1}^\infty 
	\frac{(-1)^{j+1}q^{\frac{j(j+1)}{2}-1}(1-q^{2j-1})(1-q)}  
		{(1-\omega q^{j-1})(1-\omega^{-1}q^{j-1})(1-\omega q^{j})(1-\omega^{-1}q^{j})}
,
\end{align*}
so in fact
\begin{align*}
	&(1+z)\aqprod{z,z^{-1},q}{q}{\infty}S_{J3}(z,q)
	\\
	&=
	\sum_{j=1}^\infty 
	\frac{(1-z^j)(1-z^{j-1})z^{1-j}(-1)^{j+1}q^{\frac{j(j+1)-1}{2}}(1-q^{2j-1})(1-q)}  
		{(1-\omega q^{j-1})(1-\omega^{-1}q^{j-1})(1-\omega q^{j})(1-\omega^{-1}q^{j})}
	\\
	&=
	\sum_{j=2}^\infty 
	\frac{(1-z^j)(1-z^{j-1})z^{1-j}(-1)^{j+1}q^{\frac{j(j+1)}{2}-1}
			(1-q-q^{j-1}-q^{j+1} + q^{3j-2} - q^{4j-2} - q^{3j} + q^{4j-1})}  
		{(1-q^{3j-3})(1-q^{3j})}
	\\
	&=
	\sum_{j=2}^\infty 
	\frac{(1-z^j)(1-z^{j-1})z^{1-j}(-1)^{j+1}q^{\frac{j(j-1)}{2}}
			(q^{j-1}-q^{j}-q^{2j-2}+q^{2j}+q^{4j-3}-q^{4j-1} -q^{5j-3} + q^{5j-2})}  
		{(1-q^{3j-3})(1-q^{3j})}
.
\end{align*}
\end{proof}
%%%%%%%%%%%%%%%%%%%%%%%%%%%%%%%%%%%%%%%%%%%%%%%%%%%%%%%%%%%%%%%%%%%%%%%%%%%%%%%
%%%%%%%%%%%%%%%%%%%%%%%%END PROOF FOR J4%%%%%%%%%%%%%%%%%%%%%%%%%%%%%%%%%%%%%%%
%%%%%%%%%%%%%%%%%%%%%%%%%%%%%%%%%%%%%%%%%%%%%%%%%%%%%%%%%%%%%%%%%%%%%%%%%%%%%%%

%%%%%%%%%%%%%%%%%%%%%%%%%%%%%%%%%%%%%%%%%%%%%%%%%%%%%%%%%%%%%%%%%%%%%%%%%%%%%%%
%%%%%%%%%%%%%%%%%%%%%%BEGIN PROOF FOR F3%%%%%%%%%%%%%%%%%%%%%%%%%%%%%%%%%%%%%%%
%%%%%%%%%%%%%%%%%%%%%%%%%%%%%%%%%%%%%%%%%%%%%%%%%%%%%%%%%%%%%%%%%%%%%%%%%%%%%%%
\begin{proof}[Proof of (\ref{TheoremSeriesForF3})]
By (\ref{GarvanProp41}) we have that
\begin{align*}
	(1+z)\aqprod{z,z^{-1}}{q^2}{\infty}
	S_{F3}(z,q)
	&=
	\aqprod{q}{q}{\infty}
	\sum_{n=1}^\infty	
	\frac{(1+z)\aqprod{z,z^{-1}}{q^2}{n}q^{n}}
		{\aqprod{q}{q}{2n}}
	\\
	&=
	\aqprod{q}{q}{\infty}
	\sum_{n=1}^\infty	
	\frac{(1+z)\aqprod{z,z^{-1}}{q^2}{n}q^{n}\aqprod{-q}{q}{2n}}
		{\aqprod{q^2}{q^2}{2n}}
	\\
	&=
	\aqprod{q}{q}{\infty}
	\sum_{n=1}^\infty	q^{n}\aqprod{-q}{q}{2n}
	\sum_{j=-n}^{n+1} \frac{(-1)^{j+1}(1-q^{4j-2})z^jq^{j(j-3)+2}}
		{\aqprod{q^2}{q^2}{n+j}\aqprod{q^2}{q^2}{n-j+1}}	
	.
\end{align*}
We note the coefficients of $z^{-j}$ and $z^{j+1}$ are then the same in
$(1+z)\aqprod{z,z^{-1}}{q^2}{\infty}S_{F3}(z,q)$, so we need only determine
the coefficients of $z^j$ for $j\ge 1$.
This time we will use (\ref{LemmaBailey7}).
For $j\ge 2$, the coefficient of $z^j$ in 
$(1+z)\aqprod{z,z^{-1}}{q^2}{\infty}S_{F3}(z,q)$
is given by
\begin{align*}
	&
	\aqprod{q}{q}{\infty}
	\sum_{n=j-1}^\infty \frac{q^n\aqprod{-q}{q}{2n}(-1)^{j+1}(1-q^{4j-2})q^{j(j-3)+2}}
		{\aqprod{q^2}{q^2}{n+j}\aqprod{q^2}{q^2}{n-j+1}}
	\\
	&=
	\aqprod{q}{q}{\infty}(-1)^{j+1}(1-q^{4j-2})q^{j(j-3)+2}
	\sum_{n=0}^\infty \frac{q^{n+j-1}\aqprod{-q}{q}{2n+2j-2}}
		{\aqprod{q^2}{q^2}{n+2j-1}\aqprod{q^2}{q^2}{n}}
	\\
	&=
	\frac{\aqprod{q}{q}{\infty}(-1)^{j+1}(1-q^{4j-2})q^{(j-1)^2}\aqprod{-q}{q}{2j-2}}
		{\aqprod{q^2}{q^2}{2j-1}}
	\sum_{n=0}^\infty \frac{q^{n}\aqprod{-q^{2j-1}}{q}{2n}}
		{\aqprod{q^{4j}}{q^2}{n}\aqprod{q^2}{q^2}{n}}
	\\
	&=
	\frac{\aqprod{q}{q}{\infty}(-1)^{j+1}(1-q^{4j-2})q^{(j-1)^2}\aqprod{-q}{q}{2j-2}}
		{\aqprod{q^2}{q^2}{2j-1}}
	\sum_{n=0}^\infty q^{n}\aqprod{-q^{2j-1}}{q}{2n}\beta^*_n(q^{4j-2},q^2)
	\\
	&=
	\frac{\aqprod{q}{q}{\infty}(-1)^{j+1}(1-q^{4j-2})q^{(j-1)^2}
		\aqprod{-q}{q}{2j-2}\aqprod{-q^{2j}}{q}{\infty}}
	{\aqprod{q^2}{q^2}{2j-1}\aqprod{q^{4j},q}{q^2}{\infty}}
	\\
	&=
	\frac{(-1)^{j+1}(1-q^{2j-1})q^{(j-1)^2}}{\aqprod{q}{q^2}{\infty}}
.
\end{align*}
The calculations for the coefficient of $z$ are similar and we still use
(\ref{LemmaBailey7}). In particular, the coefficient of $z$ in
$(1+z)\aqprod{z,z^{-1}}{q^2}{\infty}S_{F3}(z,q)$ is given by
\begin{align*}
	\aqprod{q}{q}{\infty}
	\sum_{n=1}^\infty \frac{q^n\aqprod{-q}{q}{2n}(1-q^{2})}
		{\aqprod{q^2}{q^2}{n+1}\aqprod{q^2}{q^2}{n}}
	&=
	\aqprod{q}{q}{\infty}
	\sum_{n=0}^\infty \frac{q^n\aqprod{-q}{q}{2n}}
		{\aqprod{q^4}{q^2}{n}\aqprod{q^2}{q^2}{n}}
	-\aqprod{q}{q}{\infty}
	\\
	&=
	\aqprod{q}{q}{\infty}
	\sum_{n=0}^\infty q^n\aqprod{-q}{q}{2n}\beta^*_n(q^2,q^2)
	-\aqprod{q}{q}{\infty}
	\\
	&=
	\frac{\aqprod{q}{q}{\infty}\aqprod{-q^2}{q}{\infty}}
	{\aqprod{q^4,q}{q^2}{\infty}}
	\\
	&=
	\frac{(1-q)}{\aqprod{q}{q^2}{\infty}}
.
\end{align*}
Thus
\begin{align*}
	&(1+z)\aqprod{z,z^{-1},q}{q^2}{\infty}S_{F3}(z,q)
	\\
	&=
		-(1+z)\aqprod{q}{q}{\infty}\aqprod{q}{q^2}{\infty}
		+
		\sum_{j=1}^\infty (z^j+z^{1-j})(-1)^{j+1}(1-q^{2j-1})q^{(j-1)^2}
	\\
	&=
		-(1+z)\aqprod{q}{q}{\infty}\aqprod{q}{q^2}{\infty}
		+
		\sum_{j=1}^\infty (z^j + z^{1-j}) (-1)^{j+1} q^{(j-1)^2}
		+
		\sum_{j=1}^\infty (z^j + z^{1-j}) (-1)^{j+1} q^{j^2}
	\\
	&=
		-(1+z)\aqprod{q}{q}{\infty}\aqprod{q}{q^2}{\infty}
		+
		\sum_{j=-\infty}^\infty (z^j + z^{1-j}) (-1)^{j+1} q^{(j-1)^2}
.
\end{align*}
However, we have by Gauss that
\begin{align*}
	\aqprod{q}{q}{\infty}\aqprod{q}{q^2}{\infty}
	&=
	\sum_{j=-\infty}^\infty (-1)^jq^{j^2}	
	=
	\sum_{j=-\infty}^\infty (-1)^{j+1}q^{(j-1)^2}	
.
\end{align*}
We then have that
\begin{align*}
	(1+z)\aqprod{z,z^{-1},q}{q^2}{\infty}S_{F3}(z,q)
	&=
		\sum_{j=-\infty}^\infty (z^j + z^{1-j}-1-z) (-1)^{j+1} q^{(j-1)^2}
	\\
	&=
		\sum_{j=-\infty}^\infty (1-z^{j-1})(1-z^j)z^{1-j} (-1)^{j+1} q^{(j-1)^2}
	.
\end{align*}

\end{proof}
%%%%%%%%%%%%%%%%%%%%%%%%%%%%%%%%%%%%%%%%%%%%%%%%%%%%%%%%%%%%%%%%%%%%%%%%%%%%%%%
%%%%%%%%%%%%%%%%%%%%%%%%END PROOF FOR F3%%%%%%%%%%%%%%%%%%%%%%%%%%%%%%%%%%%%%%%
%%%%%%%%%%%%%%%%%%%%%%%%%%%%%%%%%%%%%%%%%%%%%%%%%%%%%%%%%%%%%%%%%%%%%%%%%%%%%%%

%%%%%%%%%%%%%%%%%%%%%%%%%%%%%%%%%%%%%%%%%%%%%%%%%%%%%%%%%%%%%%%%%%%%%%%%%%%%%%%
%%%%%%%%%%%%%%%%%%%%%%BEGIN PROOF FOR G4%%%%%%%%%%%%%%%%%%%%%%%%%%%%%%%%%%%%%%%
%%%%%%%%%%%%%%%%%%%%%%%%%%%%%%%%%%%%%%%%%%%%%%%%%%%%%%%%%%%%%%%%%%%%%%%%%%%%%%%
\begin{proof}[Proof of (\ref{TheoremSeriesForG4})]
By (\ref{GarvanProp41}) we have that
\begin{align*}
	(1+z)\aqprod{z,z^{-1}}{q^2}{\infty}
	S_{G4}(z,q)
	&=
	\frac{\aqprod{q^2}{q^2}{\infty}}{\aqprod{q}{q^2}{\infty}}
	\sum_{n=1}^\infty	
	\frac{(1+z)\aqprod{z,z^{-1}}{q^2}{n}(-1)^nq^{n^2+2n}}
		{\aqprod{-q}{q^2}{n}\aqprod{q^4}{q^4}{n}}
	\\
	&=
	\frac{\aqprod{q^2}{q^2}{\infty}}{\aqprod{q}{q^2}{\infty}}
	\sum_{n=1}^\infty	
	\frac{(1+z)\aqprod{z,z^{-1}}{q^2}{n}(-1)^nq^{n^2+2n}\aqprod{q}{q^2}{n}}
		{\aqprod{q^2}{q^2}{2n}}
	\\
	&=
	\frac{\aqprod{q^2}{q^2}{\infty}}{\aqprod{q}{q^2}{\infty}}
	\sum_{n=1}^\infty	
	(-1)^nq^{n^2+2n}\aqprod{q}{q^2}{n}
	\sum_{j=-n}^{n+1} \frac{(-1)^{j+1}(1-q^{4j-2})z^jq^{j(j-3)+2}}
		{\aqprod{q^2}{q^2}{n+j}\aqprod{q^2}{q^2}{n-j+1}}	
	.
\end{align*}
We note the coefficients of $z^{-j}$ and $z^{j+1}$ are then the same in
$(1+z)\aqprod{z,z^{-1}}{q^2}{\infty}S_{G4}(z,q)$, so we need only determine
the coefficients of $z^j$ for $j\ge 1$.
This time we will use (\ref{LemmaBailey1}).
For $j\ge 2$, the coefficient of $z^j$ in 
$(1+z)\aqprod{z,z^{-1}}{q^2}{\infty}S_{G4}(z,q)$
is given by
\begin{align*}
	&
	\frac{\aqprod{q^2}{q^2}{\infty}}{\aqprod{q}{q^2}{\infty}}
	\sum_{n=j-1}^\infty	
	\frac{(-1)^{j+n+1}q^{n^2+2n}\aqprod{q}{q^2}{n}(1-q^{4j-2})q^{j(j-3)+2}}
	{\aqprod{q^2}{q^2}{n+j}\aqprod{q^2}{q^2}{n-j+1}}
	\\
	&=
	\frac{\aqprod{q^2}{q^2}{\infty}(1-q^{4j-2})q^{2j^2-3j+1}}
		{\aqprod{q}{q^2}{\infty}}	
	\sum_{n=0}^\infty	
	\frac{(-1)^{n}q^{n^2+2jn}\aqprod{q}{q^2}{n+j-1}}
	{\aqprod{q^2}{q^2}{n+2j-1}\aqprod{q^2}{q^2}{n}}
	\\
	&=
	\frac{\aqprod{q^2}{q^2}{\infty} \aqprod{q}{q^2}{j-1}(1-q^{4j-2})q^{2j^2-3j+1}}
	{\aqprod{q}{q^2}{\infty}\aqprod{q^2}{q^2}{2j-1}}
	\sum_{n=0}^\infty	
	\frac{(-1)^{n}q^{n^2+2jn}\aqprod{q^{2j-1}}{q^2}{n}}
	{\aqprod{q^{4j}}{q^2}{n}\aqprod{q^2}{q^2}{n}}
	\\
	&=
	\frac{\aqprod{q^2}{q^2}{\infty} \aqprod{q}{q^2}{j-1}(1-q^{4j-2})q^{2j^2-3j+1}}
	{\aqprod{q}{q^2}{\infty}\aqprod{q^2}{q^2}{2j-1}}
	\sum_{n=0}^\infty	
	(-1)^{n}q^{n^2+2jn}\aqprod{q^{2j-1}}{q^2}{n}\beta^*_n(q^{4j-2},q^2)
	\\
	&=
	\frac{\aqprod{q^2}{q^2}{\infty} \aqprod{q}{q^2}{j-1}(1-q^{4j-2})q^{2j^2-3j+1}}
	{\aqprod{q}{q^2}{\infty}\aqprod{q^2}{q^2}{2j-1}}
	\frac{\aqprod{q^{2j+1}}{q^2}{\infty}}{\aqprod{q^{4j}}{q^2}{\infty}}
	\\
	&=
	(1+q^{2j-1})q^{2j^2-3j+1}
.
\end{align*}
The calculations for the coefficient of $z$ are similar and we still use
(\ref{LemmaBailey1}). In particular, the coefficient of $z$ in
$(1+z)\aqprod{z,z^{-1}}{q^2}{\infty}S_{G4}(z,q)$ is given by
\begin{align*}
	\frac{\aqprod{q^2}{q^2}{\infty}}{\aqprod{q}{q^2}{\infty}}
	\sum_{n=1}^\infty \frac{(-1)^nq^{n^2+2n}\aqprod{q}{q^2}{n}(1-q^2)}
		{\aqprod{q^2}{q^2}{n+1}\aqprod{q^2}{q^2}{n}}
	&=
	\frac{\aqprod{q^2}{q^2}{\infty}}{\aqprod{q}{q^2}{\infty}}
	\sum_{n=0}^\infty \frac{(-1)^nq^{n^2+2n}\aqprod{q}{q^2}{n}}
		{\aqprod{q^4}{q^2}{n}\aqprod{q^2}{q^2}{n}}
	-
	\frac{\aqprod{q^2}{q^2}{\infty}}{\aqprod{q}{q^2}{\infty}}
	\\
	&=
	\frac{\aqprod{q^2}{q^2}{\infty}}{\aqprod{q}{q^2}{\infty}}
	\sum_{n=0}^\infty (-1)^nq^{n^2+2n}\aqprod{q}{q^2}{n}\beta^*_n(q^2;q^2)
	-
	\frac{\aqprod{q^2}{q^2}{\infty}}{\aqprod{q}{q^2}{\infty}}
	\\	
	&=
	\frac{\aqprod{q^2}{q^2}{\infty}}{\aqprod{q}{q^2}{\infty}}
	\frac{\aqprod{q^3}{q^2}{\infty}}{\aqprod{q^4}{q^2}{\infty}}
	-
	\frac{\aqprod{q^2}{q^2}{\infty}}{\aqprod{q}{q^2}{\infty}}
	\\
	&=
	1+q	
	-
	\frac{\aqprod{q^2}{q^2}{\infty}}{\aqprod{q}{q^2}{\infty}}
	.
\end{align*}
Thus
\begin{align*}
	(1+z)\aqprod{z,z^{-1}}{q^2}{\infty}S_{G4}(z,q)
	&=
	-(1+z)\frac{\aqprod{q^2}{q^2}{\infty}}{\aqprod{q}{q^2}{\infty}}
	+
	\sum_{j=1}^\infty (z^j + z^{1-j})(1+q^{2j-1})q^{2j^2-3j+1}
	\\
	&=
	-(1+z)\frac{\aqprod{q^2}{q^2}{\infty}}{\aqprod{q}{q^2}{\infty}}
	+
	\sum_{j=-\infty}^\infty z^j(1+q^{2j-1})q^{2j^2-3j+1}
	\\
	&=
	-(1+z)\frac{\aqprod{q^2}{q^2}{\infty}}{\aqprod{q}{q^2}{\infty}}
	+
	\sum_{j=-\infty}^\infty z^jq^{2j^2-3j+1}
	+
	\sum_{j=-\infty}^\infty z^jq^{2j^2-j}
	\\
	&=
	-(1+z)\frac{\aqprod{q^2}{q^2}{\infty}}{\aqprod{q}{q^2}{\infty}}
	+
	\sum_{j=-\infty}^\infty z^{1-j}q^{2j^2-j}
	+
	\sum_{j=-\infty}^\infty z^jq^{2j^2-j}
	\\
	&=
	-(1+z)\frac{\aqprod{q^2}{q^2}{\infty}}{\aqprod{q}{q^2}{\infty}}
	+
	\sum_{j=-\infty}^\infty (z^j+z^{1-j})q^{2j^2-j}
.
\end{align*}
However, we have by Gauss that
\begin{align}
	\label{IdGaussProduct}
	\frac{\aqprod{q^2}{q^2}{\infty}}{\aqprod{q}{q^2}{\infty}}
	&=
	\frac{1}{2}\sum_{n=-\infty}^\infty q^{n(n+1)/2}
	\nonumber\\
	&=
	\frac{1}{2}\left(
		\sum_{n=-\infty}^\infty q^{n(2n+1)}
		+
	\sum_{n=-\infty}^\infty q^{(2n-1)n}
	\right)
	\nonumber\\
	&= 
	\sum_{n=-\infty}^\infty q^{2n^2-n}
.
\end{align}
We then have that
\begin{align*}
	(1+z)\aqprod{z,z^{-1}}{q^2}{\infty}S_{G4}(z,q)
	&=
	\sum_{j=-\infty}^\infty (z^j+z^{1-j}-1-z)q^{2j^2-j}
	\\
	&=
	\sum_{j=-\infty}^\infty (1-z^{j-1})(1-z^j)z^{1-j}q^{2j^2-j}
.
\end{align*}
\end{proof}
%%%%%%%%%%%%%%%%%%%%%%%%%%%%%%%%%%%%%%%%%%%%%%%%%%%%%%%%%%%%%%%%%%%%%%%%%%%%%%%
%%%%%%%%%%%%%%%%%%%%%%%%END PROOF FOR G4%%%%%%%%%%%%%%%%%%%%%%%%%%%%%%%%%%%%%%%
%%%%%%%%%%%%%%%%%%%%%%%%%%%%%%%%%%%%%%%%%%%%%%%%%%%%%%%%%%%%%%%%%%%%%%%%%%%%%%%

%%%%%%%%%%%%%%%%%%%%%%%%%%%%%%%%%%%%%%%%%%%%%%%%%%%%%%%%%%%%%%%%%%%%%%%%%%%%%%%
%%%%%%%%%%%%%%%%%%%%%%BEGIN PROOF FOR AG4%%%%%%%%%%%%%%%%%%%%%%%%%%%%%%%%%%%%%%
%%%%%%%%%%%%%%%%%%%%%%%%%%%%%%%%%%%%%%%%%%%%%%%%%%%%%%%%%%%%%%%%%%%%%%%%%%%%%%%
\begin{proof}[Proof of (\ref{TheoremSeriesForAG4})]
We have by (\ref{GarvanProp41}) that
\begin{align*}
	(1+z)\aqprod{z,z^{-1}}{q^2}{\infty}
	S_{AG4}(z,q)
	&=
	\frac{\aqprod{q^2}{q^2}{\infty}}{\aqprod{q}{q^2}{\infty}}
	\sum_{n=1}^\infty	
	\frac{(1+z)\aqprod{z,z^{-1}}{q^2}{n}(-1)^nq^{n^2}}
		{\aqprod{-q}{q^2}{n}\aqprod{q^4}{q^4}{n}}
	\\
	&=
	\frac{\aqprod{q^2}{q^2}{\infty}}{\aqprod{q}{q^2}{\infty}}
	\sum_{n=1}^\infty	
	\frac{(1+z)\aqprod{z,z^{-1}}{q^2}{n}(-1)^nq^{n^2}\aqprod{q}{q^2}{n}}
		{\aqprod{q^2}{q^2}{2n}}
	\\
	&=
	\frac{\aqprod{q^2}{q^2}{\infty}}{\aqprod{q}{q^2}{\infty}}
	\sum_{n=1}^\infty	
	(-1)^nq^{n^2}\aqprod{q}{q^2}{n}
	\sum_{j=-n}^{n+1} \frac{(-1)^{j+1}(1-q^{4j-2})z^jq^{j(j-3)+2}}
		{\aqprod{q^2}{q^2}{n+j}\aqprod{q^2}{q^2}{n-j+1}}	
	.
\end{align*}
We note the coefficients of $z^{-j}$ and $z^{j+1}$ are then the same in
$(1+z)\aqprod{z,z^{-1}}{q^2}{\infty}S_{AG4}(z,q)$, so we need only determine
the coefficients of $z^j$ for $j\ge 1$.
This time we use (\ref{LemmaBailey2}).
For $j\ge 2$, the coefficient of $z^j$ in 
$(1+z)\aqprod{z,z^{-1}}{q^2}{\infty}S_{AG4}(z,q)$
is given by
\begin{align*}
	&
	\frac{\aqprod{q^2}{q^2}{\infty}}{\aqprod{q}{q^2}{\infty}}
	\sum_{n=j-1}^\infty	
	\frac{(-1)^{j+n+1}q^{n^2}\aqprod{q}{q^2}{n}(1-q^{4j-2})q^{j(j-3)+2}}
	{\aqprod{q^2}{q^2}{n+j}\aqprod{q^2}{q^2}{n-j+1}}
	\\
	&=
	\frac{\aqprod{q^2}{q^2}{\infty}(1-q^{4j-2})q^{2j^2-5j+3}}
		{\aqprod{q}{q^2}{\infty}}	
	\sum_{n=0}^\infty	
	\frac{(-1)^{n}q^{n^2+2jn-2n}\aqprod{q}{q^2}{n+j-1}}
	{\aqprod{q^2}{q^2}{n+2j-1}\aqprod{q^2}{q^2}{n}}
	\\
	&=
	\frac{\aqprod{q^2}{q^2}{\infty} \aqprod{q}{q^2}{j-1}(1-q^{4j-2})q^{2j^2-5j+3}}
	{\aqprod{q}{q^2}{\infty}\aqprod{q^2}{q^2}{2j-1}}
	\sum_{n=0}^\infty	
	\frac{(-1)^{n}q^{n^2+2jn-2n}\aqprod{q^{2j-1}}{q^2}{n}}
	{\aqprod{q^{4j}}{q^2}{n}\aqprod{q^2}{q^2}{n}}
	\\
	&=
	\frac{\aqprod{q^2}{q^2}{\infty} \aqprod{q}{q^2}{j-1}(1-q^{4j-2})q^{2j^2-5j+3}}
	{\aqprod{q}{q^2}{\infty}\aqprod{q^2}{q^2}{2j-1}}
	\sum_{n=0}^\infty	
	(-1)^{n}q^{n^2+2jn-2n}\aqprod{q^{2j-1}}{q^2}{n}\beta^{**}_n(q^{4j-4},q^2)
	\\
	&=
	\frac{\aqprod{q^2}{q^2}{\infty} \aqprod{q}{q^2}{j-1}(1-q^{4j-2})q^{2j^2-5j+3}}
	{\aqprod{q}{q^2}{\infty}\aqprod{q^2}{q^2}{2j-1}}
	\frac{\aqprod{q^{2j-1}}{q^2}{\infty}}{\aqprod{q^{4j-2}}{q^2}{\infty}}
	\sum_{n=0}^\infty (-1)^n q^{n^2+2jn-2n}\alpha^{**}_n(q^{4j-4},q^2)
	\\
	&=
	q^{2j^2-5j+3}(1+q^{6j-3})	
.
\end{align*}
Similarly, the coefficient of $z$ in
$(1+z)\aqprod{z,z^{-1}}{q^2}{\infty}S_{AG4}(z,q)$ 
is given by
\begin{align*}
	\frac{\aqprod{q^2}{q^2}{\infty}}{\aqprod{q}{q^2}{\infty}}
	\sum_{n=1}^\infty \frac{(-1)^nq^{n^2}\aqprod{q}{q^2}{n}(1-q^2)}
		{\aqprod{q^2}{q^2}{n+1}\aqprod{q^2}{q^2}{n}}
	&=
	\frac{\aqprod{q^2}{q^2}{\infty}}{\aqprod{q}{q^2}{\infty}}
	\sum_{n=0}^\infty \frac{(-1)^nq^{n^2}\aqprod{q}{q^2}{n}}
		{\aqprod{q^4}{q^2}{n}\aqprod{q^2}{q^2}{n}}
	-
	\frac{\aqprod{q^2}{q^2}{\infty}}{\aqprod{q}{q^2}{\infty}}
	\\
	&=
	\frac{\aqprod{q^2}{q^2}{\infty}}{\aqprod{q}{q^2}{\infty}}
	\sum_{n=0}^\infty (-1)^nq^{n^2}\aqprod{q}{q^2}{n}\beta^{**}_n(1;q^2)
	-
	\frac{\aqprod{q^2}{q^2}{\infty}}{\aqprod{q}{q^2}{\infty}}
	\\
	&=
	\frac{\aqprod{q^2}{q^2}{\infty}}{\aqprod{q}{q^2}{\infty}}
	\frac{\aqprod{q}{q^2}{\infty}}{\aqprod{q^2}{q^2}{\infty}}
	(1+q^3)
	-
	\frac{\aqprod{q^2}{q^2}{\infty}}{\aqprod{q}{q^2}{\infty}}
	\\
	&=
	1+q^3
	-
	\frac{\aqprod{q^2}{q^2}{\infty}}{\aqprod{q}{q^2}{\infty}}
	.
\end{align*}
Along with (\ref{IdGaussProduct}), this gives
\begin{align*}
	(1+z)\aqprod{z,z^{-1}}{q^2}{\infty}S_{AG4}(z,q)
	&=
	-(1+z)\frac{\aqprod{q^2}{q^2}{\infty}}{\aqprod{q}{q^2}{\infty}}
	+
	\sum_{j=1}^\infty (z^j + z^{1-j})(1+q^{6j-3})q^{2j^2-5j+3}
	\\
	&=
	-(1+z)\frac{\aqprod{q^2}{q^2}{\infty}}{\aqprod{q}{q^2}{\infty}}
	+
	\sum_{j=-\infty}^\infty z^j(1+q^{6j-3})q^{2j^2-5j+3}
	\\
	&=
	-(1+z)\frac{\aqprod{q^2}{q^2}{\infty}}{\aqprod{q}{q^2}{\infty}}
	+
	\sum_{j=-\infty}^\infty z^jq^{2j^2-5j+3}
	+
	\sum_{j=-\infty}^\infty z^jq^{2j^2+j}
	\\
	&=
	-(1+z)\frac{\aqprod{q^2}{q^2}{\infty}}{\aqprod{q}{q^2}{\infty}}
	+
	\sum_{j=-\infty}^\infty z^{1-j}q^{2j^2+j}
	+
	\sum_{j=-\infty}^\infty z^jq^{2j^2+j}
	\\
	&=
	\sum_{j=-\infty}^\infty (z^j+z^{1-j}-1-z)q^{2j^2+j}
	\\
	&=
	\sum_{j=-\infty}^\infty (1-z^{j-1})(1-z^j)z^{1-j}q^{2j^2+j}
.
\end{align*}
\end{proof}
%%%%%%%%%%%%%%%%%%%%%%%%%%%%%%%%%%%%%%%%%%%%%%%%%%%%%%%%%%%%%%%%%%%%%%%%%%%%%%%
%%%%%%%%%%%%%%%%%%%%%%%%END PROOF FOR AG4%%%%%%%%%%%%%%%%%%%%%%%%%%%%%%%%%%%%%%
%%%%%%%%%%%%%%%%%%%%%%%%%%%%%%%%%%%%%%%%%%%%%%%%%%%%%%%%%%%%%%%%%%%%%%%%%%%%%%%

%%%%%%%%%%%%%%%%%%%%%%%%%%%%%%%%%%%%%%%%%%%%%%%%%%%%%%%%%%%%%%%%%%%%%%%%%%%%%%%
%%%%%%%%%%%%%%%%%%%%%%%%%BEGIN DISSECTIONS FOR B2%%%%%%%%%%%%%%%%%%%%%%%%%%%%%%
%%%%%%%%%%%%%%%%%%%%%%%%%%%%%%%%%%%%%%%%%%%%%%%%%%%%%%%%%%%%%%%%%%%%%%%%%%%%%%%
\section{Dissections for $S_{B2}(z,q)$}

To begin, by Bailey's Lemma with $\rho_1=z$ and $\rho_2=z^{-1}$ we have that
\begin{align*}
	(1-z)(1-z^{-1})S_{B2}(z,q)
	&=
	\frac{\aqprod{q}{q}{\infty}}{\aqprod{zq,z^{-1}q}{q}{\infty}}
	\sum_{n=0}^\infty \frac{\aqprod{z,z^{-1}}{q}{n} q^{2n}}{\aqprod{q}{q}{n}}
	-\frac{\aqprod{q}{q}{\infty}}{\aqprod{zq,z^{-1}q}{q}{\infty}}
	\\
	&=
	\frac{\aqprod{q}{q}{\infty}}{\aqprod{zq,z^{-1}q}{q}{\infty}}
	\sum_{n=0}^\infty \aqprod{z,z^{-1}}{q}{n} q^{n}\beta^{B2}_n
	-
	\frac{\aqprod{q}{q}{\infty}}{\aqprod{zq,z^{-1}q}{q}{\infty}}
	\\
	&=
	\frac{1}{\aqprod{q}{q}{\infty}}
	\left(1+
		\sum_{n=1}^\infty \frac{(1-z)(1-z^{-1})(-1)^nq^{(3n^2-n)/2}(1+q^{3n})}
		{(1-zq^n)(1-z^{-1}q^n)}
	\right)
	-\frac{\aqprod{q}{q}{\infty}}{\aqprod{zq,z^{-1}q}{q}{\infty}}
.
\end{align*}
While the series term is not the generating function for the rank of 
partitions, it is surprisingly close to it.
We recall the rank of a partition is the largest part minus the number of 
parts. One form of the generating function for the rank of partitions, which is
given on page 64 of \cite{Watson}, is
\begin{align}
	\label{EqRankDef}
	R(z,q)
	&=
	\frac{1}{\aqprod{q}{q}{\infty}}
	\left(
	1+\sum_{n=1}^\infty\frac{(1-z)(1-z^{-1})(-1)^n q^{n(3n+1)/2}(1+q^{n})}
		{(1-zq^n)(1-z^{-1}q^n)}
	\right)
.
\end{align}
We recall the crank of a partition is the largest part, if there are no ones,
and otherwise is the number of parts larger than the number of ones minus the 
number of ones. One form of the generating function for the crank of partitions,
which is given in (7.15) of \cite{Garvan1},
is
\begin{align}
	\label{EqCrankDef}
	C(z,q)
	&=
	\frac{\aqprod{q}{q}{\infty}}{\aqprod{zq,z^{-1}q}{q}{\infty}}
.
\end{align}

\begin{lemma}\label{LemmaB2ToRank}
\begin{align*}
	\frac{1}{\aqprod{q}{q}{\infty}}
	\left(
	1+\sum_{n=1}^\infty\frac{(1-z)(1-z^{-1})(-1)^n q^{n(3n-1)/2}(1+q^{3n})}
		{(1-zq^n)(1-z^{-1}q^n)}
	\right)
	&=
	(z+z^{-1}-1)R(z,q) + (1-z)(1-z^{-1})
.
\end{align*}
\end{lemma}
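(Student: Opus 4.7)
The plan is to rewrite the series on the left so that the rank generating function $R(z,q)$ from (\ref{EqRankDef}) appears naturally, with the extra piece reducing via Euler's pentagonal number theorem.

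The first step is a purely algebraic identity comparing the $q$-exponent in the lemma with the $q$-exponent in $R(z,q)$. A short computation (multiply out $(1-q^n)(1-q^{2n})$) verifies
\begin{align*}
q^{n(3n-1)/2}(1+q^{3n})
&= q^{n(3n+1)/2}(1+q^n) + q^{n(3n-1)/2}(1-q^n)^2(1+q^n).
\end{align*}
Substituting this into the LHS sum splits it into two pieces. The first piece is exactly $\aqprod{q}{q}{\infty}(R(z,q) - 1/\aqprod{q}{q}{\infty})$, i.e., the rank generating function with the $n=0$ term removed.

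The second piece is where the main work happens. The key observation is the elementary identity
\begin{align*}
(1-q^n)^2 = (1-zq^n)(1-z^{-1}q^n) - (1-z)(1-z^{-1})q^n,
\end{align*}
obtained by expanding both sides. Dividing by $(1-zq^n)(1-z^{-1}q^n)$ and multiplying by $(1+q^n)$ converts the correction sum into
\begin{align*}
(1-z)(1-z^{-1})\sum_{n=1}^\infty (-1)^n q^{n(3n-1)/2}(1+q^n)
- (1-z)(1-z^{-1})\cdot[\text{rank-like sum}],
\end{align*}
where the rank-like sum is precisely the one defining $R(z,q)$ (the shift $q^{n(3n-1)/2}\cdot q^n = q^{n(3n+1)/2}$ lines things up).

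The first of these two remaining sums is a classical pentagonal sum: Euler's pentagonal number theorem gives
\begin{align*}
\sum_{n=1}^\infty (-1)^n q^{n(3n-1)/2}(1+q^n) = \aqprod{q}{q}{\infty} - 1.
\end{align*}
Putting the pieces together, multiplying through by $1/\aqprod{q}{q}{\infty}$, and using $1-(1-z)(1-z^{-1}) = z+z^{-1}-1$ reassembles everything into $(z+z^{-1}-1)R(z,q) + (1-z)(1-z^{-1})$, as desired.

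There is no real obstacle here; the proof is a direct manipulation of series. The only nontrivial step is recognizing the algebraic identity $(1-q^n)^2 = (1-zq^n)(1-z^{-1}q^n) - (1-z)(1-z^{-1})q^n$, which is what lets the correction term fold back onto itself and close the argument via the pentagonal number theorem.
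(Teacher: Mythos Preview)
Your proof is correct and uses essentially the same ingredients as the paper's: the rank definition, Euler's pentagonal number theorem, and an elementary algebraic relation between $(1-q^n)^2$ (or $1-q^n+q^{2n}$) and the denominator $(1-zq^n)(1-z^{-1}q^n)$. The only difference is directional---the paper starts from the right-hand side and combines the pentagonal series with the rank series term by term via $\frac{q^n(z+z^{-1}-1)}{(1-zq^n)(1-z^{-1}q^n)}+1=\frac{1-q^n+q^{2n}}{(1-zq^n)(1-z^{-1}q^n)}$ and then $(1+q^n)(1-q^n+q^{2n})=1+q^{3n}$, while you start from the left and split $1+q^{3n}$ first---but this is the same algebra read in reverse.
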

\begin{proof}
To prove this identity, we multiply both sides by $\aqprod{q}{q}{\infty}$
and expand $\aqprod{q}{q}{\infty}$ into a series by Euler's pentagonal number
theorem. 
We then have
\begin{align*}
	&\aqprod{q}{q}{\infty}((z+z^{-1}-1)R(z,q) + (1-z)(1-z^{-1}))
	\\
	&=
	z+z^{-1}-1 + (1-z)(1-z^{-1})\aqprod{q}{q}{\infty}
	+
	\sum_{n=1}^\infty\frac{(1-z)(1-z^{-1})(-1)^nq^{n(3n+1)/2}(1+q^n)(z+z^{-1}-1)}
		{(1-zq^n)(1-z^{-1}q^n)}
	\\
	&=
	1+
	\sum_{n=1}^\infty (1-z)(1-z^{-1})(-1)^nq^{n(3n-1)/2}(1+q^n)
	\left(
		\frac{q^n(z+z^{-1}-1)}{(1-zq^n)(1-z^{-1}q^n)}
		+1
 	\right)
	\\		
	&=
	1+
	\sum_{n=1}^\infty \frac{(1-z)(1-z^{-1})(-1)^nq^{n(3n-1)/2}(1+q^n)(1-q^n+q^{2n})}
		{(1-zq^n)(1-z^{-1}q^n)}
	\\		
	&=
	1+
	\sum_{n=1}^\infty \frac{(1-z)(1-z^{-1})(-1)^nq^{n(3n-1)/2}(1+q^{3n})}
		{(1-zq^n)(1-z^{-1}q^n)}
	.
\end{align*}
This proves the lemma.
\end{proof}
With Lemma \ref{LemmaB2ToRank} we now have
\begin{align}\label{B2RankCrankId}
	S_{B2}(z,q)
	&=
	\frac{(z+z^{-1}-1)R(z,q)-C(z,q)}{(1-z)(1-z^{-1})}+1
.
\end{align}
Using the rank difference formulas from \cite{AS}, we can deduce the following
dissections for the rank function.
Theorem 4 of \cite{AS} gives the following dissection for $R(\zeta_5,q)$,
which can also be found as Entry 2.1.2 in \cite{AndrewsBerndt3},
\begin{align}\label{RankZeta5Id}
	R(\zeta_5,q)
	&=
	\frac{\aqprod{q^{25}}{q^{25}}{\infty}\jacprod{q^{10}}{q^{25}}}
		{\jacprod{q^{5}}{q^{25}}^2}
	+
	q^5\frac{\zeta_5+\zeta_5^{-1}-2}{\aqprod{q^{25}}{q^{25}}{\infty}}
		\sum_{n=-\infty}^\infty \frac{(-1)^nq^{75n(n+1)/2}}{1-q^{25n+5}}
	+
	q\frac{\aqprod{q^{25}}{q^{25}}{\infty}}{\jacprod{q^5}{q^{25}}}
	\nonumber\\&\quad
	+
	q^2(\zeta_5+\zeta_5^{-1})\frac{\aqprod{q^{25}}{q^{25}}{\infty}}
		{\jacprod{q^{10}}{q^{25}}}
	-
	q^3(\zeta_5+\zeta_5^{-1})\frac{\aqprod{q^{25}}{q^{25}}{\infty}\jacprod{q^5}{q^{25}}}
		{\jacprod{q^{10}}{q^{25}}^2}
	\nonumber\\&\quad
	-
	q^8\frac{2\zeta_5+2\zeta_5^{-1}+1}{\aqprod{q^{25}}{q^{25}}{\infty}}
		\sum_{n=-\infty}^\infty \frac{(-1)^nq^{75n(n+1)/2}}{1-q^{25n+10}}
	%checked in maple
.
\end{align}
Similarly
Theorem 5 of \cite{AS} gives the following dissection for $R(\zeta_7,q)$,
which is also Entry 2.1.5 of \cite{AndrewsBerndt3}.
\begin{align}\label{RankZeta7Id}
	&R(\zeta_7,q)
	\nonumber\\
	&=
		(1-\zeta_7)(1-\zeta_7^6)
		+
		(-1+\zeta_7+\zeta_7^6)\frac{\aqprod{q^{49}}{q^{49}}{\infty}\jacprod{q^{21}}{q^{49}}}
			{\jacprod{q^{7},q^{14}}{q^{49}}}
		\nonumber\\&\quad
		+
		(2-\zeta_7-\zeta_7^6)q^7\frac{1}{\aqprod{q^{49}}{q^{49}}{\infty}}
			\sum_{n=-\infty}^\infty \frac{(-1)^nq^{147n(n+1)/2}}{1-q^{49n+7}}
		+
		q\frac{\aqprod{q^{49}}{q^{49}}{\infty}}
			{\jacprod{q^{7}}{q^{49}}}
		\nonumber\\&\quad
		+
		(\zeta_7+\zeta_7^6)q^2\frac{\aqprod{q^{49}}{q^{49}}{\infty}\jacprod{q^{14}}{q^{49}}}
			{\jacprod{q^{7},q^{21}}{q^{49}}}	
		+
		(\zeta_7-\zeta_7^2-\zeta_7^5+\zeta_7^6)q^{16}\frac{1}{\aqprod{q^{49}}{q^{49}}{\infty}}
			\sum_{n=-\infty}^\infty \frac{(-1)^nq^{147n(n+1)/2}}{1-q^{49n+21}}
		\nonumber\\&\quad
		+
		(1+\zeta_7^2+\zeta_7^5)q^3\frac{\aqprod{q^{49}}{q^{49}}{\infty}}
			{\jacprod{q^{14}}{q^{49}}}
		-
		(\zeta_7^2+\zeta_7^5)q^4\frac{\aqprod{q^{49}}{q^{49}}{\infty}}
			{\jacprod{q^{21}}{q^{49}}}
		\nonumber\\&\quad
		+
		(1+\zeta_7+2\zeta_7^2+2\zeta_7^5+\zeta_7^6)q^{13}\frac{1}{\aqprod{q^{49}}{q^{49}}{\infty}}
			\sum_{n=-\infty}^\infty \frac{(-1)^nq^{147n(n+1)/2}}{1-q^{49n+14}}
		\nonumber\\&\quad
		+
		(\zeta_7+\zeta_7^2+\zeta_7^5+\zeta_7^6)q^6
			\frac{\aqprod{q^{49}}{q^{49}}{\infty}\jacprod{q^{7}}{q^{49}}}
			{\jacprod{q^{14},q^{21}}{q^{49}}}
	%checked in maple
.
\end{align}
Next by (3.8) of \cite{Garvan1}
\begin{align}\label{CrankZeta5Id}
	\frac{\aqprod{q}{q}{\infty}}{\aqprod{\zeta_5q,\zeta_5^{-1}q}{q}{\infty}}	
	&=
	\aqprod{q^{25}}{q^{25}}{\infty}
	\left(
		\frac{\jacprod{q^{10}}{q^{25}}}{\jacprod{q^{5}}{q^{25}}^2}
		+(\zeta_5+\zeta_5^4-1)q\frac{1}{\jacprod{q^{5}}{q^{25}}}
		-(\zeta_5+\zeta_5^4+1)q^2\frac{1}{\jacprod{q^{10}}{q^{25}}}
		\right.\nonumber\\&\left.\quad
		-(\zeta_5+\zeta_5^4)q^3\frac{\jacprod{q^{5}}{q^{25}}}{\jacprod{q^{10}}{q^{25}}^2}
	\right)
.
\end{align}
Also by Theorem 5.1 of \cite{Garvan1} we have 
\begin{align}\label{CrankZeta7Id}
	\frac{\aqprod{q}{q}{\infty}}
		{\aqprod{\zeta_7q,\zeta_7^{-1}q}{q}{\infty}}
	&=
	\aqprod{q^{49}}{q^{49}}{\infty}
	\left(
		\frac{\jacprod{q^{21}}{q^{49}}}{\jacprod{q^{7},q^{14}}{q^{49}}}
		+(\zeta_7+\zeta_7^6-1)q\frac{1}{\jacprod{q^{7}}{q^{49}}}
		+(\zeta_7^2+\zeta_7^5)q^2\frac{\jacprod{q^{14}}{q^{49}}}{\jacprod{q^7,q^{21}}{q^{49}}}
		\right.\nonumber\\&\left.\quad
		-(\zeta_7+\zeta_7^2+\zeta_7^5+\zeta_7^6)q^3\frac{1}{\jacprod{q^{14}}{q^{49}}}
		-(\zeta_7+\zeta_7^6)q^4\frac{1}{\jacprod{q^{21}}{q^{49}}}
		\right.\nonumber\\&\left.\quad
		-(\zeta_7^2+\zeta_7^5+1)q^6\frac{\jacprod{q^{7}}{q^{49}}}{\jacprod{q^{14},q^{21}}{q^{49}}}
	\right)
.
\end{align}
We then find (\ref{Dissection5B2}) of Theorem \ref{TheoremDissections} follows by (\ref{B2RankCrankId}),
(\ref{RankZeta5Id}), and (\ref{CrankZeta5Id}) and 
(\ref{Dissection7B2}) of Theorem \ref{TheoremDissections} follows by (\ref{B2RankCrankId}),
(\ref{RankZeta7Id}), and (\ref{CrankZeta7Id}).

%%%%%%%%%%%%%%%%%%%%%%%%%%%%%%%%%%%%%%%%%%%%%%%%%%%%%%%%%%%%%%%%%%%%%%%%%%%%%%%
%%%%%%%%%%%%%%%%%%%%%%%%%%%END DISSECTIONS FOR B2%%%%%%%%%%%%%%%%%%%%%%%%%%%%%%
%%%%%%%%%%%%%%%%%%%%%%%%%%%%%%%%%%%%%%%%%%%%%%%%%%%%%%%%%%%%%%%%%%%%%%%%%%%%%%%

%%%%%%%%%%%%%%%%%%%%%%%%%%%%%%%%%%%%%%%%%%%%%%%%%%%%%%%%%%%%%%%%%%%%%%%%%%%%%%%
%%%%%%%%%%%%%%%%%%%%%%%%%BEGIN DISSECTIONS FOR F3%%%%%%%%%%%%%%%%%%%%%%%%%%%%%%
%%%%%%%%%%%%%%%%%%%%%%%%%%%%%%%%%%%%%%%%%%%%%%%%%%%%%%%%%%%%%%%%%%%%%%%%%%%%%%%
\section{Dissections of $S_{F3}(z,q)$}

By Bailey's Lemma with $\rho_1=z$ and $\rho_2=z^{-1}$ we have that
\begin{align*}
	(1-z)(1-z^{-1})S_{F3}(z,q)
	&=
	\frac{\aqprod{q}{q}{\infty}}{\aqprod{zq^2,z^{-1}q^2}{q^2}{\infty}}
	\sum_{n=0}^\infty\frac{\aqprod{z,z^{-1}}{q^2}{n}q^n}{\aqprod{q}{q}{2n}}
	-
	\frac{\aqprod{q}{q}{\infty}}{\aqprod{zq^2,z^{-1}q^2}{q^2}{\infty}}
	\\
	&=
	\frac{\aqprod{q}{q^2}{\infty}}{\aqprod{q^2}{q^2}{\infty}}
	\left(
		1
		+
		\sum_{n=1}^\infty\frac{(1-z)(1-z^{-1})q^n(1+q^{2n})}
		{(1-zq^{2n})(1-z^{-1}q^{2n})}
	\right)
	-
	\frac{\aqprod{q}{q}{\infty}}{\aqprod{zq^2,z^{-1}q^2}{q}{\infty}}
	.
\end{align*}

We first find the dissections for the product term and then proceed with the 
series term. When $z=\zeta_7$ we use the theory of modular
functions, both for the product term and the series term. 
We recall some facts about modular functions as in \cite{Rankin} and use 
the notation in \cite{Berndt} and \cite{Robins}. 
The generalized eta function is defined by
\begin{align*}
	\GEta{\delta}{g}{\tau} 
	&= 
	q^{P(g/\delta)\delta/2 }
	\prod_{\substack{n>0\\n\equiv g \pmod{d}}} (1-q^n)
	\prod_{\substack{n>0\\n\equiv -g \pmod{d}}} (1-q^n)
,
\end{align*}
where $q = e^{2\pi i \tau}$ and
$P(t) = \CBrackets{t}^2-\CBrackets{t}+\frac{1}{6}$. 
So $\GEta{\delta}{0}{\tau}=q^{\delta/12}\aqprod{q^\delta}{q^\delta}{\infty}^2$ and 
$\GEta{\delta}{g}{\tau} = q^{P(g/\delta)\delta/2 }\jacprod{q^g}{q^{\delta}}$
for $0<g<\delta$.
We use Theorem 3 of
\cite{Robins} to determine when a quotient of $\GEta{\delta}{g}{\tau}$
is a modular function with respect to a congruence subgroup 
$\Gamma_1(N)$ and use Theorem 4 of \cite{Robins} to determine the order
at the cusps.
Suppose $f$ is a modular function with respect to the congruence subgroup $\Gamma$ 
of $\Gamma_0(1)$. For $A\in\Gamma_0(1)$ we have a cusp given 
by $\zeta=A^{-1}\infty$. The width of the cusp $W:=W(\Gamma,\zeta)$ is
given by
\begin{align*}
	W(\Gamma,\zeta) &= \min\{ k>0:\pm A^{-1}T^kA\in\Gamma \},
\end{align*}
where $T$ is the translation matrix 
\begin{align*}
	T &= {\Parans{\begin{array}{cc}
				1&1\\
				0&1
			\end{array}}}
.
\end{align*}

If
\begin{align*}
	f(A^{-1}\tau) &= \sum_{m=m_0}^\infty b_mq^{m/W}
\end{align*}
and $b_{m_0}\not=0$, then we say $m_0$ is the order of $f$ at
$\zeta$ with respect to $\Gamma$ and we denote this value 
by $Ord_\Gamma(f;\zeta)$. By
$ord(f;\zeta)$ we mean the invariant order of $f$ at $\zeta$
given by
\begin{align*}
	ord(f;\zeta) = \frac{Ord_\Gamma(f;\zeta)}{W}.
\end{align*}

For $z$ in the upper half plane $\mathcal{H}$, we write 
$ord(f;z)$ for the order of $f$ at $z$ as an analytic function
in $z$. We define the order of $f$ at $z$ with respect to
$\Gamma$ by
\begin{align*}
	Ord_\Gamma(f;z) = \frac{ord(f;z)}{m},
\end{align*}
where $m$ is the order of $z$ as a fixed point of $\Gamma$.

The valence formula for modular functions is as follows.
Suppose a subset $\mathcal{F}$ of 
$\mathcal{H}\cup\{\infty\}\cup\mathbb{Q}$ is a fundamental
region for the action of $\Gamma$ along with a complete set of
inequivalent cusps, if $f$ is not the zero 
function then
\begin{align}
	\sum_{z\in\mathcal{F}}Ord_\Gamma(f;z) &=0.
\end{align}
We can verify an identity between sums of generalized eta quotients as follows.
Suppose we are to show
\begin{align*}
	a_1f_1 + a_2f_2 + \dots + a_kf_k &=	a_{k+1}f_{k+1} + a_{k+2}f_{k+2} + \dots + a_{k+m}f_{k+m}
,\end{align*}
where each $a_i\in\mathbb{C}$ and each $f_i$ is of the form
\begin{align*}
	f_i &= \prod_{j=1}^{n}  \GEta{\delta_j}{g_j}{\tau}^{r_j}   
.
\end{align*}
We verify each $f_i$ is a modular function with respect to a common
$\Gamma_1(N)$, so that $f = a_1f_1+\dots+a_kf_k - a_{k+1}f_{k+1}-\dots-a_{k+m}f_{k+m}$ 
is a modular function
with respect to $\Gamma_1(N)$. Although $f$ may have zeros at points other than
the cusps, the poles must occur only at the cusps.
At each cusp $\zeta$, not equivalent to $\infty$, we compute a lower bound
for $Ord_\Gamma(f;\zeta)$ by taking the minimum of the $Ord_\Gamma(f_i,\zeta)$
, we call this lower bound $B_\zeta$. We then use the $q$-expansion
of $f$ to find that $Ord_\Gamma(f;\infty)$ is larger than 
$-\sum_{\zeta\in\mathcal{C}'} B_\zeta$, where $\mathcal{C}'$ is a set 
of cusps with a representative of each cusp not equivalent to $\infty$. By the
valence formula we have $f\equiv 0$ since 
$\sum_{z\in\mathcal{F}}Ord_\Gamma(f;z) >0$.

\begin{proposition}
\begin{align}
	\label{EqF3Mod3CrankId}
	\frac{\aqprod{q,q^2}{q^2}{\infty}}
		{\aqprod{\zeta_3q^2,\zeta_3^{-1}q^2}{q^2}{\infty}}
	&=
	\frac{\aqprod{q^9}{q^9}{\infty}^4}
	{\aqprod{q^{18}}{q^{18}}{\infty}^2\aqprod{q^3}{q^3}{\infty}}
	-
	q\frac{\aqprod{q^{18}}{q^{18}}{\infty}\aqprod{q^9}{q^9}\infty{}}
	{\aqprod{q^{6}}{q^{6}}{\infty}}
	-
	2q^2\frac{\aqprod{q^{18}}{q^{18}}{\infty}^4\aqprod{q^3}{q^3}{\infty}}
	{\aqprod{q^{9}}{q^{9}}{\infty}^2\aqprod{q^6}{q^6}{\infty}^2}
	%checked in maple
	,\\
	%%%%%%%%%%%%%%%%%%%%%%%%%%%%%%%%%%%%%%%%%%%%%%%%%%%%%%%%%%%%%%%%%%%%%%%%%%%%%%
	\label{EqF3Mod5CrankId}
	\frac{\aqprod{q,q^2}{q^2}{\infty}}{\aqprod{\zeta_5q^2,\zeta_5^4q^2}{q^2}{\infty}}
	&=
		\frac{\aqprod{q^{25}}{q^{25}}{\infty}\jacprod{q^{15}}{q^{50}}}
			{\jacprod{q^5}{q^{25}}} 	
		-
		q\frac{\aqprod{q^{25}}{q^{25}}{\infty}}
			{\jacprod{q^{10}}{q^{50}}}
		+		
		(\zeta_5+\zeta_5^4)q^2
		\frac{\aqprod{q^{25}}{q^{25}}{\infty}\jacprod{q^{10}}{q^{25}}}
			{\jacprod{q^5}{q^{25}}\jacprod{q^{20}}{q^{50}}}
		\nonumber\\&\quad
		-
		q^2\frac{\aqprod{q^{25}}{q^{25}}{\infty}\jacprod{q^5}{q^{25}}}
			{\jacprod{q^{10}}{q^{25}}\jacprod{q^{10}}{q^{50}}}
		-
		(\zeta_5+\zeta_5^4)
		q^3\frac{\aqprod{q^{25}}{q^{25}}{\infty}}
			{\jacprod{q^{20}}{q^{50}}}
		\nonumber\\&\quad
		-
		(\zeta_5+\zeta_5^4)q^4
			\frac{\aqprod{q^{25}}{q^{25}}{\infty}\jacprod{q^5}{q^{50}}}
				{\jacprod{q^{10}}{q^{25}}}
	%checked in maple
	,\\
	%%%%%%%%%%%%%%%%%%%%%%%%%%%%%%%%%%%%%%%%%%%%%%%%%%%%%%%%%%%%%%%%%%%%%%%%%%%%%%
	\label{EqF3Mod7CrankId}
	\frac{\aqprod{q,q^2}{q^2}{\infty}}{\aqprod{\zeta_7q^2,\zeta_7^{-1}q^2}{q^2}{\infty}}
	&=
		\frac{\aqprod{q^{49}}{q^{49}}{\infty}}{\jacprod{q^{14}}{q^{98}}}
		-
		(\zeta_7^2+\zeta_7^5)q\frac{\aqprod{q^{49}}{q^{49}}{\infty}\jacprod{q^{35}}{q^{98}}}
			{\jacprod{q^{21}}{q^{49}}}
		\nonumber\\&\quad
		+
		(1+\zeta_7^2+\zeta_7^5)q\frac{\aqprod{q^{49}}{q^{49}}{\infty}\jacprod{q^{21}}{q^{98}}}
			{\jacprod{q^{7}}{q^{49}}}
		-
		2q\frac{\aqprod{q^{98}}{q^{98}}{\infty}\jacprod{q^{35}}{q^{98}}}
			{\aqprod{q^{49}}{q^{98}}{\infty}\jacprod{q^{14}}{q^{98}}}
		\nonumber\\&\quad
		-
		(1-\zeta_7-\zeta_7^6)
		q^2\frac{\aqprod{q^{49}}{q^{49}}{\infty}\jacprod{q^{14}}{q^{49}}}
			{\jacprod{q^{7}}{q^{49}}\jacprod{q^{28}}{q^{98}}}
		-
		(\zeta_7+\zeta_7^6)
		q^3\frac{\aqprod{q^{49}}{q^{49}}{\infty}\jacprod{q^{7}}{q^{49}}}
			{\jacprod{q^{21}}{q^{49}}\jacprod{q^{14}}{q^{98}}}
		\nonumber\\&\quad
		+
		(1+\zeta_7^2+\zeta_7^5)
		q^4\frac{\aqprod{q^{49}}{q^{49}}{\infty}}{\jacprod{q^{28}}{q^{98}}}
		-
		(\zeta_7+\zeta_7^2+\zeta_7^5+\zeta_7^6)
		q^5\frac{\aqprod{q^{49}}{q^{49}}{\infty}\jacprod{q^{21}}{q^{49}}}
			{\jacprod{q^{14}}{q^{49}}\jacprod{q^{42}}{q^{98}}}
		\nonumber\\&\quad
		-
		(\zeta_7^2+\zeta_7^5)
		q^6\frac{\aqprod{q^{49}}{q^{49}}{\infty}}{\jacprod{q^{42}}{q^{98}}}	
		%maple agrees to this 7 dissection.
.
\end{align}
\end{proposition}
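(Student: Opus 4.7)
The plan is to prove all three identities by recognizing each side as a $\mathbb{Q}(\zeta_p)$-linear combination of generalized eta-quotients and applying the valence-formula method outlined in the previous section.

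First I would rewrite the left hand side. Since $\aqprod{q,q^2}{q^2}{\infty}=\aqprod{q}{q}{\infty}$, the LHS equals $\aqprod{q}{q}{\infty}/\aqprod{\zeta_p q^2,\zeta_p^{-1}q^2}{q^2}{\infty}$. The denominator is decomposed by sorting the indices $n\ge 1$ modulo $p$: for each residue $r$ one collects $\prod_{m\ge 0}(1-\zeta_p q^{2r+2pm})(1-\zeta_p^{-1}q^{2r+2pm})$, and repackages it as a Jacobi theta product $\jacprod{\cdot}{q^{2p}}$. For $p=3$ the clean identity $(1-\zeta_3 x)(1-\zeta_3^{-1}x)=(1-x^3)/(1-x)$ applies globally and gives $\aqprod{\zeta_3 q^2,\zeta_3^{-1}q^2}{q^2}{\infty}=\aqprod{q^6}{q^6}{\infty}/\aqprod{q^2}{q^2}{\infty}$, so the LHS of (\ref{EqF3Mod3CrankId}) collapses to the pure eta-quotient $\aqprod{q}{q}{\infty}\aqprod{q^2}{q^2}{\infty}/\aqprod{q^6}{q^6}{\infty}$. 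For $p=5,7$ the analogous step retains nontrivial Jacobi factors, but the entire LHS is still expressible, after multiplying by a suitable $q$-power, as a product of the generalized eta-quotients $\GEta{\delta}{g}{\tau}$ on which the theory of Section 2 operates.

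With both sides expressed in the form $q^?\prod_j\GEta{\delta_j}{g_j}{\tau}^{r_j}$, I would then take the congruence subgroup $\Gamma_1(N)$ with $N=18,50,98$ for $p=3,5,7$ respectively. Theorem~3 of Robins confirms that each summand is a modular function on $\Gamma_1(N)$, so the difference $f$ of LHS and RHS is again such a function and has its only possible poles at the cusps. For every cusp $\zeta$ of $\Gamma_1(N)$ not equivalent to $\infty$, Theorem~4 of Robins gives $Ord_{\Gamma_1(N)}(f_i;\zeta)$ for each summand $f_i$; taking the minimum over $i$ yields a lower bound $B_\zeta$ for $Ord_{\Gamma_1(N)}(f;\zeta)$. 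The valence formula then forces $f\equiv 0$ provided one verifies by direct $q$-expansion at $\infty$ that $f$ vanishes there to order strictly larger than $-\sum_\zeta B_\zeta$, a finite check with coefficients in $\mathbb{Q}(\zeta_p)$.

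An alternative for $p=3,5$ would be to substitute $q\mapsto q^2$ in the classical crank dissection (\ref{CrankZeta5Id}) (and its analogue at $\zeta_3$), then multiply by $\aqprod{q}{q^2}{\infty}$ and simplify via standard theta identities relating $\aqprod{q^{25}}{q^{25}}{\infty}$ and $\aqprod{q^{50}}{q^{50}}{\infty}$, which avoids the valence formula. The hard part will be the $p=7$ identity (\ref{EqF3Mod7CrankId}) on $\Gamma_1(98)$: the level is large, there are many inequivalent cusps, the right hand side has ten summands each requiring its own order computation, and the cyclotomic coefficients must be tracked throughout. The corresponding $q$-expansion at $\infty$ must be computed to a fairly high order in order to make the valence formula bite, and this bookkeeping—rather than any conceptual difficulty—is where the bulk of the work will lie.
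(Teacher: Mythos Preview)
Your plan for $p=3$ is fine: the factorization $(1-\zeta_3 x)(1-\zeta_3^{-1}x)=(1-x^3)/(1-x)$ really does collapse the left side to the pure eta-quotient $\aqprod{q}{q}{\infty}\aqprod{q^2}{q^2}{\infty}/\aqprod{q^6}{q^6}{\infty}$, and a valence-formula check on $\Gamma_1(18)$ would go through. (The paper simply cites this case from earlier work rather than reproving it.)

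The gap is in your primary approach for $p=5,7$. The assertion that ``the entire LHS is still expressible \ldots\ as a product of the generalized eta-quotients $\GEta{\delta}{g}{\tau}$'' is not correct. For $p\ge 5$ the quadratic factor $(1-\zeta_p q^{2n})(1-\zeta_p^{-1}q^{2n})=1-(\zeta_p+\zeta_p^{-1})q^{2n}+q^{4n}$ does not split into factors of the shape $(1-q^m)$, so $\aqprod{\zeta_p q^2,\zeta_p^{-1}q^2}{q^2}{\infty}$ is not an $\eta_{\delta,g}$-quotient at all, and Robins' Theorems~3 and~4 give you nothing to compute with on that side. Sorting indices modulo $p$ does not help: each residue class still carries the irrational coefficient $\zeta_p+\zeta_p^{-1}$. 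Equivalently, $C(\zeta_p,q)$ for $p\ge 5$ is not a single eta-quotient; that is precisely why identities like (\ref{CrankZeta5Id}) and (\ref{CrankZeta7Id}) are nontrivial multi-term dissections.

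What you list as an ``alternative'' is in fact the essential step, and it is exactly what the paper does. For $p=3,5$ the paper invokes earlier results of Garvan--Jennings-Shaffer (obtained by replacing $q$ by $-q$). For $p=7$ the paper writes the left side as $\aqprod{q}{q^2}{\infty}C(\zeta_7,q^2)$, substitutes the known crank dissection (\ref{CrankZeta7Id}) with $q\mapsto q^2$ so that the left side becomes a $\mathbb{Q}(\zeta_7)$-linear combination of genuine $\eta_{\delta,g}$-quotients, divides through by one of them, and only then runs the valence-formula argument on $\Gamma_1(98)$ (checking the $q$-expansion past $q^{210}$). So: swap the roles of your two approaches. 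Use the crank dissection first to turn the left side into a sum of eta-quotients, and then apply the modular-function machinery you describe; your description of the $p=7$ bookkeeping is otherwise accurate.
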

\begin{proof}
Equation (\ref{EqF3Mod3CrankId}) follows from
Theorem 2.11 of \cite{GarvanJennings} by replacing $q$ by $-q$ 
and simplifying the products.
Similarly (\ref{EqF3Mod5CrankId}) follows from Theorem 2.12 of 
\cite{GarvanJennings} with $q$ replaced by $-q$.

We recognize the left hand side of (\ref{EqF3Mod7CrankId}) 
as $\aqprod{q}{q^2}{\infty}C(\zeta_7,q^2)$, where $C(z,q)$ is defined 
in (\ref{EqCrankDef}).
For $C(\zeta_7,q^2)$, we use
(\ref{CrankZeta7Id}) with $q$ replaced by $q^2$.
If we divide both sides by 
$\frac{\aqprod{q}{q^2}{\infty}\aqprod{q^{98}}{q^{98}}{\infty}\jacprod{q^{42}}{q^{98}}}
	{\jacprod{q^{14},q^{28}}{q^{98}}}$,
we find that (\ref{EqF3Mod7CrankId}) is equivalent to
\begin{align}
	\label{EqF3Mod7CrankModularMess}
	&1
	+
	(\zeta_7^6+\zeta_7-1)\frac{\GEta{98}{28}{\tau}}{\GEta{98}{42}{\tau}}
	+
	(\zeta_7^5+\zeta_7^2)\frac{\GEta{98}{28}{\tau}^{2}}{\GEta{98}{42}{\tau}^{2}}
	+
	(\zeta_7^4+\zeta_7^3+1)\frac{\GEta{98}{14}{\tau}}{\GEta{98}{42}{\tau}}
	-
	(\zeta_7^6+\zeta_7)\frac{\GEta{98}{14}{\tau}\GEta{98}{28}{\tau}}
		{\GEta{98}{42}{\tau}^{2}}
	\nonumber\\&\quad
	-
	(\zeta_7^5+\zeta_7^2+1)\frac{\GEta{98}{14}{\tau}^{2}}{\GEta{98}{42}{\tau}^{2}}
	\nonumber\\
	&=
	\frac{\GEta{98}{28}{\tau}\GEta{98}{49}{\tau}^{1/2}}
		{\GEta{2}{1}{\tau}^{1/2}\GEta{98}{42}{\tau}}
	-
	(\zeta_7^5+\zeta_7^2)
	\frac{\GEta{98}{14}{\tau}\GEta{98}{28}{\tau}\GEta{98}{35}{\tau}\GEta{98}{49}{\tau}^{1/2}}
		{\GEta{2}{1}{\tau}^{1/2}\GEta{49}{21}{\tau}\GEta{98}{42}{\tau}}
	\nonumber\\&\quad
	+
	(\zeta_7^5+\zeta_7^2+1)
	\frac{\GEta{98}{14}{\tau}\GEta{98}{21}{\tau}\GEta{98}{28}{\tau}\GEta{98}{49}{\tau}^{1/2}}
		{\GEta{2}{1}{\tau}^{1/2}\GEta{49}{7}{\tau}\GEta{98}{42}{\tau}}
	-
	2\frac{\GEta{98}{28}{\tau}\GEta{98}{35}{\tau}}
		{\GEta{2}{1}{\tau}^{1/2}\GEta{98}{42}{\tau}\GEta{98}{49}{\tau}^{1/2}}
	\nonumber\\&\quad
	-
	(-\zeta_7^6-\zeta_7+1)
		\frac{\GEta{49}{14}{\tau}\GEta{98}{14}{\tau}\GEta{98}{49}{\tau}^{1/2}}
		{\GEta{2}{1}{\tau}^{1/2}\GEta{49}{7}{\tau}\GEta{98}{42}{\tau}}
	-
	(\zeta_7^6+\zeta_7)\frac{\GEta{49}{7}{\tau}\GEta{98}{28}{\tau}\GEta{98}{49}{\tau}^{1/2}}
		{\GEta{2}{1}{\tau}^{1/2}\GEta{49}{21}{\tau}\GEta{98}{42}{\tau}}
	\nonumber\\&\quad
	+
	(\zeta_7^5+\zeta_7^2+1)\frac{\GEta{98}{14}{\tau}\GEta{98}{49}{\tau}^{1/2}}
		{\GEta{2}{1}{\tau}^{1/2}\GEta{98}{42}{\tau}}
	-
	(\zeta_7^6+\zeta_7^5+\zeta_7^2+\zeta_7)
		\frac{\GEta{49}{21}{\tau}\GEta{98}{14}{\tau}\GEta{98}{28}{\tau}\GEta{98}{49}{\tau}^{1/2}}
		{\GEta{2}{1}{\tau}^{1/2}\GEta{49}{14}{\tau}\GEta{98}{42}{\tau}^{2}}
	\nonumber\\&\quad
	-
	(\zeta_7^5+\zeta_7^2)\frac{\GEta{98}{14}{\tau}\GEta{98}{28}{\tau}\GEta{98}{49}{\tau}^{1/2}}
		{\GEta{2}{1}{\tau}^{1/2}\GEta{98}{42}{\tau}^{2}}
.
\end{align}
However, by Theorem 3 of \cite{Robins} each individual term of 
(\ref{EqF3Mod7CrankModularMess}) is a modular function with respect to 
$\Gamma_1(98)$. Using Theorem 4 of \cite{Robins} to compute the orders at the 
cusps, as explained previously, we find to prove (\ref{EqF3Mod7CrankModularMess})
that we need only check this identity in the $q$-series expansion past
$q^{210}$. This we do with Maple and so (\ref{EqF3Mod7CrankId}) is true.

\end{proof}

\begin{proposition}
\begin{align}
	\label{EqF3Mod3RankId}
	&\frac{\aqprod{q}{q^2}{\infty}}{\aqprod{q^2}{q^2}{\infty}}
	\left(1+
	\sum_{n=1}^\infty\frac{(1-\zeta_3)(1-\zeta_3^{-1})q^n(1+q^{2n})}
		{(1-\zeta_3q^{2n})(1-\zeta_3^{-1}q^{2n})}
	\right)
	\nonumber\\
	&=
	\frac{\aqprod{q^9}{q^9}{\infty}^4}{\aqprod{q^3}{q^3}{\infty}\aqprod{q^{18}}{q^{18}}{\infty}^2}
	+
	2q\frac{\aqprod{q^9}{q^9}{\infty}\aqprod{q^{18}}{q^{18}}{\infty}}
		{\aqprod{q^6}{q^6}{\infty}}
	+q^2\frac{\aqprod{q^3}{q^3}{\infty}\aqprod{q^{18}}{q^{18}}{\infty}^4}
		{\aqprod{q^6}{q^6}{\infty}^2\aqprod{q^9}{q^9}{\infty}^2}
	,\\
	%%%%%%%%%%%%%%%%%%%%%%%%%%%%%%%%%%%%%%%%%%%%%%%%%%%%%%%%%%%%%%%%%%%%%%%%%%%%%%
	%%%%%%%%%%%%%%%%%%%%%%%%%%%%%%%%%%%%%%%%%%%%%%%%%%%%%%%%%%%%%%%%%%%%%%%%%%%%%%
	\label{EqF3Mod5RankId}
	&\frac{\aqprod{q}{q^2}{\infty}}{\aqprod{q^2}{q^2}{\infty}}
	\left(	
		1+\sum_{n=1}^\infty
		\frac{(1-\zeta_5)(1-\zeta_5^{-1})q^n(1+q^{2n})}{(1-\zeta_5q^{2n})(1-\zeta_5q^{2n})}
	\right)
	\nonumber\\
	&=
		\frac{\aqprod{q^{25}}{q^{25}}{\infty}\jacprod{q^{15}}{q^{50}}}
			{\jacprod{q^{5}}{q^{25}}}
		+
		(1-\zeta_5-\zeta_5^4)q\frac{\aqprod{q^{25}}{q^{25}}{\infty}}
			{\jacprod{q^{10}}{q^{50}}}
		+
		q^2\frac{\aqprod{q^{50}}{q^{50}}{\infty}\jacprod{q^{15}}{q^{50}}}
			{\aqprod{q^{25}}{q^{50}}{\infty}\jacprod{q^{10}}{q^{50}}}
		\nonumber\\&\quad
		-
		(\zeta_5+\zeta_5^4)q^7\frac{\aqprod{q^{50}}{q^{50}}{\infty}\jacprod{q^{5}}{q^{50}}}
			{\aqprod{q^{25}}{q^{50}}{\infty}\jacprod{q^{20}}{q^{50}}}
		+
		(1+\zeta_5+\zeta_5^4)q^3\frac{\aqprod{q^{25}}{q^{25}}{\infty}}{\jacprod{q^{20}}{q^{50}}}
		-		
		(\zeta_5+\zeta_5^4)q^4\frac{\aqprod{q^{25}}{q^{25}}{\infty}\jacprod{q^{5}}{q^{50}}}
			{\jacprod{q^{10}}{q^{25}}}
	,\\
	%%%%%%%%%%%%%%%%%%%%%%%%%%%%%%%%%%%%%%%%%%%%%%%%%%%%%%%%%%%%%%%%%%%%%%%%%%%%%%
	%%%%%%%%%%%%%%%%%%%%%%%%%%%%%%%%%%%%%%%%%%%%%%%%%%%%%%%%%%%%%%%%%%%%%%%%%%%%%%
	\label{EqF3Mod7RankId}
	&\frac{\aqprod{q}{q^2}{\infty}}{\aqprod{q^2}{q^2}{\infty}}
	\left(	
		1+\sum_{n=1}^\infty
		\frac{(1-\zeta_7)(1-\zeta_7^{-1})q^n(1+q^{2n})}{(1-\zeta_7q^{2n})(1-\zeta_7q^{2n})}
	\right)
	\nonumber\\
	&=
		\frac{\aqprod{q^{49}}{q^{49}}{\infty}}{\jacprod{q^{14}}{q^{98}}}
		+
		q\frac{\aqprod{q^{98}}{q^{98}}{\infty}\jacprod{q^{35}}{q^{98}}}
			{\aqprod{q^{49}}{q^{98}}{\infty}\jacprod{q^{14}}{q^{98}}}
		-
		(\zeta_7+\zeta_7^6)q\frac{\aqprod{q^{14}}{q^{14}}{\infty}}
		{\aqprod{q^{7}}{q^{14}}{\infty}}
		-
		(\zeta_7^2+\zeta_7^5)q^8\frac{\aqprod{q^{98}}{q^{98}}{\infty}\jacprod{q^{21}}{q^{98}}}
			{\aqprod{q^{49}}{q^{98}}{\infty}\jacprod{q^{28}}{q^{98}}}
		\nonumber\\&\quad
		+
		q^2\frac{\aqprod{q^{49}}{q^{49}}{\infty}\jacprod{q^{14}}{q^{49}}}
			{\jacprod{q^7}{q^{49}}\jacprod{q^{28}}{q^{98}}}
		-
		(\zeta_7^2+\zeta_7^5)q^3\frac{\aqprod{q^{49}}{q^{49}}{\infty}\jacprod{q^{7}}{q^{49}}}
			{\jacprod{q^{21}}{q^{49}}\jacprod{q^{14}}{q^{98}}}
		+
		(1+\zeta_7^2+\zeta_7^5)q^4\frac{\aqprod{q^{49}}{q^{49}}{\infty}}
			{\jacprod{q^{28}}{q^{98}}}
		\nonumber\\&\quad
		+
		(1+\zeta_7^2+\zeta_7^5)q^5\frac{\aqprod{q^{49}}{q^{49}}{\infty}\jacprod{q^{21}}{q^{49}}}
			{\jacprod{q^{14}}{q^{49}}\jacprod{q^{42}}{q^{98}}}
		-
		(\zeta_7^2+\zeta_7^5)q^6\frac{\aqprod{q^{49}}{q^{49}}{\infty}}
			{\jacprod{q^{42}}{q^{98}}}
.
\end{align}	
\end{proposition}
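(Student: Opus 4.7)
The plan is to first reduce the left-hand side of each of (\ref{EqF3Mod3RankId})--(\ref{EqF3Mod7RankId}) to a single infinite product. The Bailey's Lemma identity opening this section gives, with $z=\zeta_t$,
$$\text{LHS} \;=\; (1-\zeta_t)(1-\zeta_t^{-1})\,S_{F3}(\zeta_t,q) \;+\; \frac{\aqprod{q}{q}{\infty}}{\aqprod{\zeta_t q^2,\zeta_t^{-1}q^2}{q^2}{\infty}}.$$
Next I would invoke the closed product form provided by the Corollary to Theorem \ref{TheoremSingleSeries},
$$S_{F3}(z,q) \;=\; \frac{\aqprod{zq,z^{-1}q,q^2}{q^2}{\infty}}{\aqprod{z,z^{-1},q}{q^2}{\infty}} \;-\; \frac{\aqprod{q}{q}{\infty}}{\aqprod{z,z^{-1}}{q^2}{\infty}}.$$
Using the identity $\aqprod{z,z^{-1}}{q^2}{\infty} = (1-z)(1-z^{-1})\aqprod{zq^2,z^{-1}q^2}{q^2}{\infty}$, the two crank-type subtraction terms cancel exactly and the left-hand side of each equation in the proposition collapses to the single infinite product
$$\Psi_t(q) \;:=\; \frac{\aqprod{\zeta_t q,\zeta_t^{-1}q,q^2}{q^2}{\infty}}{\aqprod{\zeta_t q^2,\zeta_t^{-1}q^2,q}{q^2}{\infty}}.$$
So each identity reduces to proving that $\Psi_t(q)$ equals the stated right-hand side.

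For $t=3$ and $t=5$ I would handle this by combining two dissections. By the Jacobi triple product identity the numerator of $\Psi_t(q)$ equals $\sum_{n\in\mathbb{Z}}(-1)^n\zeta_t^n q^{n^2}$; splitting the sum according to $n \bmod t$ and applying the Jacobi triple product identity to each residue class expresses the numerator as a $\mathbb{Z}[\zeta_t]$-linear combination of theta quotients at level $q^{t^2}$. The denominator, which may be written as $\aqprod{q}{q^2}{\infty}\aqprod{\zeta_t q^2,\zeta_t^{-1}q^2}{q^2}{\infty}$, has a $t$-dissection coming from that of the crank $C(\zeta_t,q^2)$; for $t=3$ this follows by substituting $q\mapsto q^2$ in the identity of \cite{GarvanJennings} used to establish (\ref{EqF3Mod3CrankId}), and for $t=5$ by substituting $q\mapsto q^2$ in (\ref{CrankZeta5Id}). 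Gathering powers of $q$ by residue class modulo $t$ and simplifying using standard theta identities then matches the right-hand sides of (\ref{EqF3Mod3RankId}) and (\ref{EqF3Mod5RankId}).

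For $t=7$ I would mirror the modular-function argument used by the authors to prove (\ref{EqF3Mod7CrankId}): after multiplying the rearranged equation $\Psi_7(q)=\text{RHS of }(\ref{EqF3Mod7RankId})$ through by a common generalized eta-quotient to clear denominators, every resulting term is a modular function on $\Gamma_1(98)$ by Theorem 3 of \cite{Robins}. I would then apply Theorem 4 of \cite{Robins} to obtain lower bounds for the order at each cusp of $\Gamma_1(98)$ not equivalent to $\infty$, and the valence formula reduces (\ref{EqF3Mod7RankId}) to a finite $q$-expansion check, carried out by computer algebra. The main obstacle is this $t=7$ case: the cusp-order bookkeeping at level $98$ is lengthy, and although the required degree of $q$-verification is of the same magnitude as in the proof of (\ref{EqF3Mod7CrankId}), it is large enough to make hand computation impractical.
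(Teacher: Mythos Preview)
Your reduction of the left-hand side to the single product
\[
\Psi_t(q)=\frac{\aqprod{\zeta_t q,\zeta_t^{-1}q,q^2}{q^2}{\infty}}{\aqprod{\zeta_t q^2,\zeta_t^{-1}q^2,q}{q^2}{\infty}}
\]
via the Corollary to Theorem~\ref{TheoremSingleSeries} is correct and is a genuinely different starting point from the paper's. The paper never invokes that corollary here; instead it expands the denominator $(1-\zeta_t q^{2n})(1-\zeta_t^{-1}q^{2n})$ and writes the bracketed sum as a $\mathbb{Z}[\zeta_t]$-linear combination of Lambert series $\sum_n (q^{an}-q^{bn})/(1-q^{2tn})$. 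For $t=3$ it then applies Fine's identity (32.42) to collapse this to a product, and one checks that this product is exactly your $\Psi_3(q)$; from that point on the two proofs of (\ref{EqF3Mod3RankId}) coincide. For $t=5$ and $t=7$ the paper does \emph{not} pass through a single product: it keeps the decomposition into two (resp.\ three) Lambert pieces, converts each piece separately to a theta quotient using the identities (\ref{LambertSeriesIdentBerndt}) and (\ref{LambertSeriesIdentALL}), and then $t$-dissects each resulting quotient via Lemma~\ref{LemmaThetaQuotientDissection} (for $t=5$) or via modular-function verification on $\Gamma_1(98)$ (for $t=7$).

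What your route buys is uniformity: one product $\Psi_t$ for all $t$, and for $t=7$ a single modular identity to check rather than the three separate claims (\ref{EqF3Mod7RankProofId1})--(\ref{EqF3Mod7RankProofId3}). What the paper's route buys is that for $t=5$ everything is proved by hand from classical Lambert-series identities, with no computer verification needed. Your sketch for $t=3,5$ is the weakest part: dissecting the numerator $\sum_n(-1)^n\zeta_t^n q^{n^2}$ and the reciprocal of the denominator via $C(\zeta_t,q^2)$ gives two $t$-term sums whose product must then be reorganized into a $t$-dissection, and the phrase ``simplifying using standard theta identities'' hides nontrivial product identities at level $2t^2$. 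This is not a gap in principle---the modular-function fallback you describe for $t=7$ works equally well for $t=5$ on $\Gamma_1(50)$---but as written the $t=5$ case is not yet a proof.
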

We see that (\ref{Dissection3F3}) follows by subtracting
(\ref{EqF3Mod3CrankId}) from (\ref{EqF3Mod3RankId}) and dividing by
$(1-\zeta_3)(1-\zeta_3^{-1})$, (\ref{Dissection5F3}) follows by subtracting
(\ref{EqF3Mod5CrankId}) from (\ref{EqF3Mod5RankId}) and dividing by
$(1-\zeta_5)(1-\zeta_5^{-1})$, and (\ref{Dissection7F3}) follows by subtracting
(\ref{EqF3Mod7CrankId}) from (\ref{EqF3Mod7RankId}) and dividing by
$(1-\zeta_7)(1-\zeta_7^{-1})$.
While we can prove (\ref{EqF3Mod3RankId}) with elementary rearrangements, the
proofs of (\ref{EqF3Mod5RankId}) and (\ref{EqF3Mod7RankId}) will require the
following identities.
We use equation (17.1) from \cite[page 303]{Berndt}, which in our
notation is
\begin{align}\label{LambertSeriesIdentBerndt}
	\frac{q}{ab}\frac{\aqprod{q^2}{q^2}{\infty}^4\jacprod{a,b,ab}{q^2}}
		{\aqprod{q}{q}{\infty}^2\jacprod{aq,bq,abq}{q^2}}
	&=
	\sum_{n=1}^\infty \frac{q^n}{1-q^{2n}}
	\left(\frac{1}{a^nb^n} - \frac{1}{a^n} - \frac{1}{b^n} +a^n + b^n -a^nb^n \right)
.
\end{align}
We also use Theorem 1 of \cite{AndrewsLewisLiu} with $b=a$ and $c=q^{1/2}$,
\begin{align}\label{LambertSeriesIdentALL}
	\frac{\aqprod{q}{q}{\infty}^2\jacprod{a^2,aq^{1/2},aq^{1/2}}{q}}
		{\jacprod{a,a,q^{1/2}, a^2q^{1/2}}{q}}
	&=
		1 
		+ 2\sum_{k=0}^\infty \frac{aq^k}{1-aq^k}
		- 2\sum_{k=1}^\infty \frac{q^k/a}{1-q^k/a}
		- \sum_{k=0}^\infty \frac{a^2q^{k+1/2}}{1-a^2q^{k+1/2}}
		+ \sum_{k=1}^\infty \frac{q^{k-1/2}/a^2}{1-q^{k-1/2}/a^2}
.
\end{align}
Lastly, we will use the following dissection formula for certain quotients of theta
functions.
\begin{lemma}\label{LemmaThetaQuotientDissection}
Let $M$ be a positive integer and 
$|q^{2M}|<|z|<1$, then
\begin{align*}
	\frac{\aqprod{q^{2M}}{q^{2M}}{\infty}^2\jacprod{zq^M}{q^{2M}}}
	{\aqprod{q^{M}}{q^{2M}}{\infty}^2\jacprod{z}{q^{2M}}}
	&=
	\frac{\aqprod{q^{2M^2}}{q^{2M^2}}{\infty}^2}
		{\jacprod{z^M}{q^{2M^2}}}
	\sum_{k \in A}z^k
	\frac{ \jacprod{z^Mq^{M(2k+1)}}{q^{2M^2}}  }
		{\jacprod{q^{M(2k+1)}}{q^{2M^2}}}
,
\end{align*}
where $A$ is any full set of residues modulo $N$ (such as
$A=\{0,1,2,\dots,M-1\}$).
\end{lemma}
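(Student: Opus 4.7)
My plan is to prove Lemma \ref{LemmaThetaQuotientDissection} by comparing both sides as meromorphic functions of $z$ on $\mathbb{C}^{*}$, using quasi-periodicity and residue matching, then concluding via a Laurent-series argument. Throughout I will use the fundamental transformation $\jacprod{wq^{2N}}{q^{2N}} = -w^{-1}\jacprod{w}{q^{2N}}$, applied with the appropriate base $q^{2N}$.

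First I would verify that both sides transform by the same multiplier $q^{-M}$ under $z \mapsto zq^{2M}$. For the LHS this is immediate: applying the transformation with $w = zq^M$ and $w = z$ to numerator and denominator respectively, the factors $-(zq^M)^{-1}$ and $-z^{-1}$ combine to $q^{-M}$. For the RHS, $\jacprod{z^M}{q^{2M^2}}$ transforms by $-z^{-M}$ (since $(zq^{2M})^M = z^Mq^{2M^2}$), while each factor $\jacprod{z^M q^{M(2k+1)}}{q^{2M^2}}$ transforms by $-z^{-M}q^{-M(2k+1)}$, and the prefactor $z^k$ picks up $q^{2Mk}$; collecting these with the signs and powers of $z$ and $q$, each summand picks up the single factor $q^{-M}$ uniformly.

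Next I would match poles and residues. By quasi-periodicity it suffices to work in a single fundamental annulus. The LHS has a simple pole at $z=1$ coming from $\jacprod{z}{q^{2M}}\sim(1-z)\aqprod{q^{2M}}{q^{2M}}{\infty}^2$; combining this with $\jacprod{q^M}{q^{2M}}=\aqprod{q^M}{q^{2M}}{\infty}^2$ gives residue $-1$. The RHS has potential simple poles at the zeros of $\jacprod{z^M}{q^{2M^2}}$, namely $z=\zeta_M^i$ for $i=0,1,\dots,M-1$. Using $1-z^M\sim -M\zeta_M^{-i}(z-\zeta_M^i)$ and observing that at $z=\zeta_M^i$ the inner theta-quotient $\jacprod{z^M q^{M(2k+1)}}{q^{2M^2}}/\jacprod{q^{M(2k+1)}}{q^{2M^2}}$ simplifies to $1$, the residue at $z=\zeta_M^i$ is
\[
-\frac{\zeta_M^i}{M}\sum_{k=0}^{M-1}\zeta_M^{ik},
\]
which equals $-1$ for $i=0$ and vanishes for $i\neq0$ by the orthogonality of $M$-th roots of unity. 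So the pole structures and residues of LHS and RHS agree in the fundamental annulus, and hence everywhere by the quasi-periodicity established above.

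Consequently $g(z):=\mathrm{LHS}-\mathrm{RHS}$ is holomorphic on $\mathbb{C}^{*}$ and satisfies $g(zq^{2M})=q^{-M}g(z)$. Expanding $g(z)=\sum_{n\in\mathbb{Z}} a_n z^n$ as a Laurent series and equating coefficients forces $a_n\bigl(q^{M(2n+1)}-1\bigr)=0$ for every integer $n$; since $M(2n+1)\neq 0$ and $0<|q|<1$, we have $|q^{M(2n+1)}|\neq 1$, so $a_n=0$ for all $n$ and $g\equiv 0$. The main technical hurdle is the careful bookkeeping of the RHS quasi-periodicity, where three kinds of shifts must collapse into the single multiplier $q^{-M}$; the same mechanism also explains the independence of the identity from the choice of residue set $A$, since replacing $k$ by $k+M$ leaves the $k$-th summand unchanged (the compensating factors $-q^{-M(2k+1)}$ in numerator and denominator cancel, and the extra $z^M$ from $z^{k+M}$ is absorbed by the shift in $\jacprod{z^M q^{M(2k+1)}}{q^{2M^2}}$).
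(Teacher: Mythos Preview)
Your proof is correct and takes a genuinely different route from the paper's. The paper proceeds by a direct computation: it applies Ramanujan's ${}_1\psi_1$ summation (in the form $\sum_{n\in\mathbb{Z}} x^n/(1-yq^n) = \aqprod{q}{q}{\infty}^2\jacprod{xy}{q}/\jacprod{x,y}{q}$) with $q\mapsto q^{2M}$, $x=z$, $y=q^M$ to expand the left-hand side as the Lambert series $\sum_{n\in\mathbb{Z}} z^n/(1-q^{2Mn+M})$, splits that sum according to $n\bmod M$, and then applies ${}_1\psi_1$ in reverse to each residue class. Your argument is instead an elliptic-function verification: you show both sides have the same multiplier $q^{-M}$ under $z\mapsto zq^{2M}$, match the single genuine pole and residue $-1$ at $z=1$ in a fundamental annulus (the spurious poles at the other $M$-th roots of unity being killed by orthogonality), and then force the holomorphic difference to vanish via its Laurent expansion. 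The paper's proof is shorter and constructive, explaining \emph{why} the dissection takes this shape; your approach is a clean verification that illustrates a general technique applicable to many theta-quotient identities even when no obvious Lambert-series expansion is available. Your final remark on independence from the residue set $A$ is a nice addition that the paper leaves implicit.
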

\begin{proof}
We recall a specialization of Ramanujan's $_1 \Psi_1$ formula gives
\begin{align*}
	\frac{\aqprod{q}{q}{\infty}^2\jacprod{xy}{q}}{\jacprod{x,y}{q}}
	&=
	\sum_{n=-\infty}^\infty \frac{x^n}{1-yq^n}
\end{align*}
for $|q|<|x|<1$. We let $q\mapsto q^{2M}$, $x=z$, and $y=q^M$ to find that
\begin{align*}
	\frac{\aqprod{q^{2M}}{q^{2M}}{\infty}^2\jacprod{zq^M}{q^{2M}}}
		{\aqprod{q^{M}}{q^{2M}}{\infty}^2\jacprod{z}{q^{2M}}}
	&=
	\sum_{n=-\infty}^\infty \frac{z^n}{1-q^{2Mn+M}}
	\\
	&=
	\sum_{k \in A}
	\sum_{n=-\infty}^\infty \frac{z^{Mn+k}}{1-q^{2M^2n+2Mk+M}}
	\\
	&=
	\sum_{k \in A}z^k
	\sum_{n=-\infty}^\infty \frac{z^{Mn}}{1-q^{2M^2n+2Mk+M}}
	\\
	&=
	\sum_{k \in A}z^k
	\frac{ \jacprod{z^Mq^{M(2k+1)}}{q^{2M^2}}\aqprod{q^{2M^2}}{q^{2M^2}}{\infty}^2  }
		{\jacprod{z^M, q^{M(2k+1)}}{q^{2M^2}}}
.
\end{align*}
\end{proof}
In particular, we can set $z=\pm q^a$ for $1\le a < 2M$.
Similar dissection
formulas for certain quotients of theta functions follow from both the quintuple 
product identity and Theorem 2.1 of \cite{Chan2}.

\begin{proof}[Proof of (\ref{EqF3Mod3RankId})]
We have that
\begin{align*}
	1+
	\sum_{n=1}^\infty \frac{(1-\zeta_3)(1-\zeta_3^{-1})q^{n}(1+q^{2n})}
		{(1-\zeta_3q^{2n})(1-\zeta_3^{-1}q^{2n})}
	&=
	1+
	3\sum_{n=1}^\infty \frac{q^{n}(1+q^{2n})(1-q^{2n})}
		{(1-q^{6n})}
	\\
	&=
	1+
	3\sum_{n=1}^\infty \frac{q^n-q^{5n}}{1-q^{6n}}
	\\
	&=
	1+
	3\sum_{n=1}^\infty q^n E_1(n:6)
	\\
	&=
	\frac{\aqprod{q^2}{q^2}{\infty}^6\aqprod{q^3}{q^3}{\infty}}
	{\aqprod{q}{q}{\infty}^3\aqprod{q^6}{q^6}{\infty}^2}
,
\end{align*}
where
\begin{align*}
	E_{r}(N;m) 
	&=
	\sum_{d\mid N, d\equiv r\pmod{m}}1
	-
	\sum_{d\mid N, d\equiv -r\pmod{m}}1
,
\end{align*}
by (32.42) of \cite{Fine}.
By Gauss and the Jacobi Triple Product Identity
\begin{align*}
	\frac{\aqprod{q^2}{q^2}{\infty}^2}{\aqprod{q}{q}{\infty}}
	&=
	\frac{1}{2}\sum_{n=-\infty}^\infty q^{n(n+1)/2}
	\\
	&=
	\frac{1}{2}\sum_{n=-\infty}^\infty q^{(9n^2+3n)/2}
	+\frac{q}{2}\sum_{n=-\infty}^\infty q^{(9n^2+9n)/2}
	+\frac{q^3}{2}\sum_{n=-\infty}^\infty q^{(9n^2+15n)/2}
	\\
	&=
	\frac{1}{2}\aqprod{-q^3,-q^6,q^9}{q^9}{\infty}
	+\frac{q}{2}\aqprod{-1,-q^9,q^9}{q^9}{\infty}
	+\frac{q^3}{2}\aqprod{-q^{-3},-q^{12},q^9}{q^9}{\infty}
	\\
	&=
	\aqprod{-q^3,-q^6,q^9}{q^9}{\infty}
	+q\aqprod{-q^9,-q^9,q^9}{q^9}{\infty}
.
\end{align*}
Thus
\begin{align*}
	\frac{\aqprod{q^2}{q^2}{\infty}^4}{\aqprod{q}{q}{\infty}^2}
	&=
	\aqprod{-q^3,-q^6,q^9}{q^9}{\infty}^2
	+
	2q
	\aqprod{-q^3,-q^6,-q^9,-q^9,q^9,q^9}{q^9}{\infty}
	+q^2\aqprod{-q^9,-q^9,q^9}{q^9}{\infty}^2
	\\
	&=
	\aqprod{-q^3,-q^6,q^9}{q^9}{\infty}^2
	+
	2q\aqprod{-q^3}{q^3}{\infty}\aqprod{q^9}{q^9}{\infty}\aqprod{q^{18}}{q^{18}}{\infty}
	+
	q^2\aqprod{-q^9}{q^9}{\infty}^2\aqprod{q^{18}}{q^{18}}{\infty}^2
.
\end{align*}
And so
\begin{align*}
	&\frac{\aqprod{q}{q^2}{\infty}}{\aqprod{q^2}{q^2}{\infty}}
	\left(1+
	\sum_{n=1}^\infty\frac{(1-\zeta_3)(1-\zeta_3^{-1})q^n(1+q^{2n})}
		{(1-\zeta_3q^{2n})(1-\zeta_3^{-1}q^{2n})}
	\right)
	\nonumber\\
	&=
	\frac{\aqprod{q^2}{q^2}{\infty}^4\aqprod{q^3}{q^3}{\infty}}
	{\aqprod{q}{q}{\infty}^2\aqprod{q^6}{q^6}{\infty}^2}
	\nonumber\\
	&=
	\frac{\aqprod{q^3}{q^3}{\infty}\aqprod{-q^3,-q^6,q^9}{q^9}{\infty}^2}{\aqprod{q^6}{q^6}{\infty}^2}
	+
	2q\frac{\aqprod{q^9}{q^9}{\infty}\aqprod{q^{18}}{q^{18}}{\infty}}
		{\aqprod{q^6}{q^6}{\infty}}
	+q^2\frac{\aqprod{q^3}{q^3}{\infty}\aqprod{-q^9}{q^9}{\infty}^2\aqprod{q^{18}}{q^{18}}{\infty}^2}
		{\aqprod{q^6}{q^6}{\infty}^2}
	\nonumber\\
	&=
	\frac{\aqprod{q^9}{q^9}{\infty}^4}{\aqprod{q^3}{q^3}{\infty}\aqprod{q^{18}}{q^{18}}{\infty}^2}
	+
	2q\frac{\aqprod{q^9}{q^9}{\infty}\aqprod{q^{18}}{q^{18}}{\infty}}
		{\aqprod{q^6}{q^6}{\infty}}
	+q^2\frac{\aqprod{q^3}{q^3}{\infty}\aqprod{q^{18}}{q^{18}}{\infty}^4}
		{\aqprod{q^6}{q^6}{\infty}^2\aqprod{q^9}{q^9}{\infty}^2}
.
\end{align*}
\end{proof}

\begin{proof}[Proof of (\ref{EqF3Mod5RankId}) ]

To begin, we have
\begin{align*}
	&1+(1-\zeta_5)(1-\zeta_5^{-1})\sum_{n=1}^\infty
	\frac{q^n(1+q^{2n})}{(1-\zeta_5q^{2n})(1-\zeta_5q^{2n})}
	\\
	&=
	1+(1-\zeta_5)(1-\zeta_5^{-1})\sum_{n=1}^\infty
		\frac{q^n(1+q^{2n})(1-q^{2n})(1-\zeta_5^2q^{2n})(1-\zeta_5^3q^{2n})}{1-q^{10n}}
	\\
	&=
	1
		+
		(2-\zeta_5-\zeta_5^4)\sum_{n=1}^\infty
			\frac{q^n-q^{9n}}{1-q^{10n}}
		+
		(1+2\zeta_5+2\zeta_5^4)\sum_{n=1}^\infty
			\frac{q^{3n}-q^{7n}}{1-q^{10n}}
.
\end{align*}
We claim that
\begin{align}
	\label{EqF3Mod5RankProofId1}
	&\frac{\aqprod{q}{q^2}{\infty}}{\aqprod{q^2}{q^2}{\infty}}
	\left(
	1+2\sum_{n=1}^\infty\frac{q^n-q^{9n}}{1-q^{10n}}
		+\sum_{n=1}^\infty\frac{q^{3n}-q^{7n}}{1-q^{10n}}
	\right)
	\nonumber\\
	&=
		\frac{\aqprod{q^{10}}{q^{10}}{\infty}\jacprod{q^4}{q^{10}}}
		{\aqprod{q^5}{q^{10}}{\infty}\jacprod{q}{q^{10}}}
	\nonumber\\
	&=
		\frac{\aqprod{q^{25}}{q^{25}}{\infty}\jacprod{q^{15}}{q^{50}}}
			{\jacprod{q^5}{q^{25}}}
		+
		q\frac{\aqprod{q^{25}}{q^{25}}{\infty}}{\jacprod{q^{10}}{q^{50}}}
		+
		q^2\frac{\aqprod{q^{50}}{q^{50}}{\infty}\jacprod{q^{15}}{q^{50}}}
			{\aqprod{q^{25}}{q^{50}}{\infty}\jacprod{q^{10}}{q^{50}}}
		+
		q^3\frac{\aqprod{q^{25}}{q^{25}}{\infty}}{\jacprod{q^{20}}{q^{50}}}
.
\end{align}
We note the second identity of (\ref{EqF3Mod5RankProofId1}) is just an 
application of Lemma 
\ref{LemmaThetaQuotientDissection}, with $M=5$ and $z=q$ and simplifying
the resulting products. So we need to verify the first identity.
We have
\begin{align*}
	&1+2\sum_{n=1}^\infty\frac{q^n-q^{9n}}{1-q^{10n}}
		+\sum_{n=1}^\infty\frac{q^{3n}-q^{7n}}{1-q^{10n}}
	\\
	&=
		1
		+2\sum_{n=1}^\infty\frac{q^{10n+1}}{1-q^{10n+1}}
		-2\sum_{n=0}^\infty\frac{q^{10n-1}}{1-q^{10n-1}}
		-\sum_{n=0}^\infty\frac{q^{10n+7}}{1-q^{10n+7}}
		+\sum_{n=1}^\infty\frac{q^{10n-7}}{1-q^{10n-7}}
	\\
	&=
	\frac{\aqprod{q^{10}}{q^{10}}{\infty}^2\jacprod{q^2,q^6,q^6}{q^{10}}}
		{\jacprod{q,q,q^5,q^7}{q^{10}}}
,
\end{align*}
where we have used (\ref{LambertSeriesIdentALL}) with $q\mapsto q^{10}$ and
$a=q$. Thus
\begin{align*}
	\frac{\aqprod{q}{q^2}{\infty}}{\aqprod{q^2}{q^2}{\infty}}
	\left(1+2\sum_{n=1}^\infty\frac{q^n-q^{9n}}{1-q^{10n}}
		+\sum_{n=1}^\infty\frac{q^{3n}-q^{7n}}{1-q^{10n}}
	\right)
	&=
	\frac{\aqprod{q}{q^2}{\infty}\aqprod{q^{10}}{q^{10}}{\infty}^2\jacprod{q^2,q^6,q^6}{q^{10}}}
		{\aqprod{q^2}{q^2}{\infty}\jacprod{q,q,q^5,q^7}{q^{10}}}
	\\
	&=	
	\frac{\aqprod{q^{10}}{q^{10}}{\infty}\jacprod{q^4}{q^{10}}}
		{\aqprod{q^5}{q^{10}}{\infty}\jacprod{q}{q^{10}}}
.
\end{align*}
Next we claim that
\begin{align}
	\label{EqF3Mod5RankProofId2}
	&\frac{\aqprod{q}{q^2}{\infty}}{\aqprod{q^2}{q^2}{\infty}}
	\left(
	-\sum_{n=1}^\infty\frac{q^n-q^{9n}}{1-q^{10n}}
		+2\sum_{n=1}^\infty\frac{q^{3n}-q^{7n}}{1-q^{10n}}
	\right)
	\nonumber\\
	&=
	-q\frac{\aqprod{q^{10}}{q^{10}}{\infty}\jacprod{q^2}{q^{10}}}
		{\aqprod{q^{5}}{q^{10}}{\infty}\jacprod{q^3}{q^{10}}}
	\nonumber\\
	&=
		-q\frac{\aqprod{q^{25}}{q^{25}}{\infty}}{\jacprod{q^{10}}{q^{50}}}
		-
		q^7\frac{\aqprod{q^{50}}{q^{50}}{\infty}\jacprod{q^{5}}{q^{50}}}
			{\aqprod{q^{25}}{q^{50}}{\infty}\jacprod{q^{20}}{q^{50}}}
		+
		q^3\frac{\aqprod{q^{25}}{q^{25}}{\infty}}{\jacprod{q^{20}}{q^{50}}}
		-		
		q^4\frac{\aqprod{q^{25}}{q^{25}}{\infty}\jacprod{q^{5}}{q^{50}}}
			{\jacprod{q^{10}}{q^{25}}}
.
\end{align}
We note the second identity of (\ref{EqF3Mod5RankProofId1}) 
follows by Lemma \ref{LemmaThetaQuotientDissection}
with $M=5$ and $z=q^3$. For the first identity, we apply (\ref{LambertSeriesIdentBerndt})
with $q\mapsto q^5$ and $a=b=q^{2}$ to get
\begin{align*}
	\sum_{n=1}^\infty\frac{q^n-q^{9n}}{1-q^{10n}}
		-2\sum_{n=1}^\infty\frac{q^{3n}-q^{7n}}{1-q^{10n}}
	&=
	q\frac{ \aqprod{q^{10}}{q^{10}}{\infty}^4\jacprod{q^{2},q^{2},q^{4}}{q^{10}} }
		{\aqprod{q^5}{q^5}{\infty}^2\jacprod{q^7,q^7,q^9}{q^{10}}}
.
\end{align*}
Thus
\begin{align*}
	\frac{\aqprod{q}{q^2}{\infty}}{\aqprod{q^2}{q^2}{\infty}}
	\left(
	-2\sum_{n=1}^\infty\frac{q^n-q^{9n}}{1-q^{10n}}
		+2\sum_{n=1}^\infty\frac{q^{3n}-q^{7n}}{1-q^{10n}}
	\right)
	&=
	-q\frac{\aqprod{q}{q^2}{\infty}\aqprod{q^{10}}{q^{10}}{\infty}^4\jacprod{q^2,q^2,q^4}{q^{10}} }
		{\aqprod{q^2}{q^2}{\infty}\aqprod{q^5}{q^5}{\infty}^2\jacprod{q^7,q^7,q^9}{q^{10}}}
	\\
	&=
	-q\frac{\aqprod{q^{10}}{q^{10}}{\infty}\jacprod{q^2}{q^{10}} }
		{\aqprod{q^5}{q^{10}}{\infty}\jacprod{q^3}{q^{10}}}
.
\end{align*}
Equation (\ref{EqF3Mod5RankId}) now follows from
(\ref{EqF3Mod5RankProofId1}) and (\ref{EqF3Mod5RankProofId2}).
\end{proof}

\begin{proof}[Proof of (\ref{EqF3Mod7RankId})]

We begin with
\begin{align*}
	&1+(1-\zeta_7)(1-\zeta_7^{-1})\sum_{n=1}^\infty
	\frac{q^n(1+q^{2n})}{(1-\zeta_7q^{2n})(1-\zeta_7q^{2n})}
	\\
	&=
	1+(1-\zeta_7)(1-\zeta_7^{-1})\sum_{n=1}^\infty
		\frac{q^n(1+q^{2n})(1-q^{2n})(1-\zeta_7^2q^{2n})(1-\zeta_7^3q^{2n})
			(1-\zeta_7^4q^{2n})(1-\zeta_7^5q^{2n})}
		{1-q^{14n}}
	\\
	&=
	1
		+
		(2-\zeta_7-\zeta_7^6)\sum_{n=1}^\infty
			\frac{q^n-q^{13n}}{1-q^{14n}}
		+
		(\zeta_7-\zeta_7^2-\zeta_7^5+\zeta_7^6)\sum_{n=1}^\infty
			\frac{q^{3n}-q^{11n}}{1-q^{14n}}
		\\&\quad
		+
		(1+\zeta_7+2\zeta_7^2+2\zeta_7^5+\zeta_5^6)\sum_{n=1}^\infty
			\frac{q^{5n}-q^{9n}}{1-q^{14n}}
.
\end{align*}
We claim that
\begin{align}
	\label{EqF3Mod7RankProofId1}
	&\frac{\aqprod{q}{q^2}{\infty}}{\aqprod{q^2}{q^2}{\infty}}
	\left(
    	1+\sum_{n=1}^\infty \frac{2q^n + q^{5n} - q^{9n} - 2q^{13n}}
			{1-q^{14n}} 
	\right)
	\nonumber\\
	&=
	\frac{\aqprod{q^{14}}{q^{14}}{\infty}\jacprod{q^3,q^6}{q^{14}}}
		{\aqprod{q^{7}}{q^{14}}{\infty}\jacprod{q,q^4}{q^{14}}}
	\nonumber\\
	&=
		\frac{\aqprod{q^{49}}{q^{49}}{\infty}}{\jacprod{q^{14}}{q^{98}}}
		+
		q\frac{\aqprod{q^{98}}{q^{98}}{\infty}\jacprod{q^{35}}{q^{98}}}
			{\aqprod{q^{49}}{q^{98}}{\infty}\jacprod{q^{14}}{q^{98}}}
		+
		q^2\frac{\aqprod{q^{49}}{q^{49}}{\infty}\jacprod{q^{14}}{q^{49}}}
			{\jacprod{q^7}{q^{49}}\jacprod{q^{28}}{q^{98}}}
		+
		q^4\frac{\aqprod{q^{49}}{q^{49}}{\infty}}
			{\jacprod{q^{28}}{q^{98}}}
		\nonumber\\&\quad
		+
		q^5\frac{\aqprod{q^{49}}{q^{49}}{\infty}\jacprod{q^{21}}{q^{49}}}
			{\jacprod{q^{14}}{q^{49}}\jacprod{q^{42}}{q^{98}}}
.
\end{align}
For this we apply (\ref{LambertSeriesIdentALL}) with $q\mapsto q^{14}$
and $a=q$ to get that
\begin{align*}
	1+\sum_{n=1}^\infty \frac{2q^n + q^{5n} - q^{9n} - 2q^{13n}}{1-q^{14n}} 
	&=
		1
		+2\sum_{n=0}^\infty \frac{q^{14n+1}}{1-q^{14n+1}}
		-2\sum_{n=1}^\infty \frac{q^{14n-1}}{1-q^{14n-1}}
		-\sum_{n=0}^\infty \frac{q^{14n+9}}{1-q^{14n+9}}
		+\sum_{n=1}^\infty \frac{q^{14n-9}}{1-q^{14n-9}}
	\\
	&=
	\frac{\aqprod{q^{14}}{q^{14}}{\infty}^2\jacprod{q^2,q^8,q^8}{q^{14}}}
		{\jacprod{q,q,q^7,q^9}{q^{14}}}
	.
\end{align*}
Thus
\begin{align*}
	\frac{\aqprod{q}{q^2}{\infty}}{\aqprod{q^2}{q^2}{\infty}}
	\left(
    	1+\sum_{n=1}^\infty \frac{2q^n + q^{5n} - q^{9n} - 2q^{13n}}
			{1-q^{14n}} 
	\right)
	&=
	\frac{\aqprod{q}{q^2}{\infty}\aqprod{q^{14}}{q^{14}}{\infty}^2\jacprod{q^2,q^8,q^8}{q^{14}}}
		{\aqprod{q^2}{q^2}{\infty}\jacprod{q,q,q^7,q^9}{q^{14}}}
	\\
	&=
	\frac{\aqprod{q^{14}}{q^{14}}{\infty}\jacprod{q^3,q^6}{q^{14}}}
		{\aqprod{q^7}{q^{14}}{\infty}\jacprod{q,q^4}{q^{14}}}
.
\end{align*}
To verify the second identity of (\ref{EqF3Mod7RankProofId1}),
we divide both sides by
$\frac{\aqprod{q^{14}}{q^{14}}{\infty}\jacprod{q^3,q^6}{q^{14}}}
 {\aqprod{q^{7}}{q^{14}}{\infty}\jacprod{q,q^4}{q^{14}}}$,
to get an identity between modular functions on
$\Gamma_1(98)$. As we did in the proof of (\ref{EqF3Mod7CrankId}),
we examine the orders at the poles of various modular functions and find
that to prove the identity between modular functions we just need to
verify the identity in the $q$-series expansion past $q^{147}$. We do this in Maple.

Next we claim that
\begin{align}
	\label{EqF3Mod7RankProofId2}
	\frac{\aqprod{q}{q^2}{\infty}}{\aqprod{q^2}{q^2}{\infty}}
	\left(
    	\sum_{n=1}^\infty \frac{(-q^n + q^{3n} + q^{5n} - q^{9n} - q^{11n} + q^{13n})}
			{1-q^{14n}} 
	\right)
	&=
	-q\frac{\aqprod{q^{14}}{q^{14}}{\infty}}
		{\aqprod{q^{7}}{q^{14}}{\infty}}
.
\end{align}
This is actually Entry 17(i) of \cite{Berndt}, so there is nothing for us to
prove.
Lastly we claim that
\begin{align}
	\label{EqF3Mod7RankProofId3}
	&\frac{\aqprod{q}{q^2}{\infty}}{\aqprod{q^2}{q^2}{\infty}}
	\left(
    	\sum_{n=1}^\infty \frac{(-q^{3n} + 2q^{5n} - 2q^{9n} + q^{11n})}
			{1-q^{14n}} 
	\right)
	\nonumber\\
	&=
	-q^3\frac{\aqprod{q^{14}}{q^{14}}{\infty}\jacprod{q,q^2}{q^{14}}}
		{\aqprod{q^{7}}{q^{14}}{\infty}\jacprod{q^5,q^6}{q^{14}}}
	\nonumber\\
	&=
		-q^8\frac{\aqprod{q^{98}}{q^{98}}{\infty}\jacprod{q^{21}}{q^{98}}}
			{\aqprod{q^{49}}{q^{98}}{\infty}\jacprod{q^{28}}{q^{98}}}
		-
		q^3\frac{\aqprod{q^{49}}{q^{49}}{\infty}\jacprod{q^{7}}{q^{49}}}
			{\jacprod{q^{21}}{q^{49}}\jacprod{q^{14}}{q^{98}}}
		+
		q^4\frac{\aqprod{q^{49}}{q^{49}}{\infty}}
			{\jacprod{q^{28}}{q^{98}}}
		+
		q^5\frac{\aqprod{q^{49}}{q^{49}}{\infty}\jacprod{q^{21}}{q^{49}}}
			{\jacprod{q^{14}}{q^{49}}\jacprod{q^{42}}{q^{98}}}
		\nonumber\\&\quad
		-
		q^6\frac{\aqprod{q^{49}}{q^{49}}{\infty}}
			{\jacprod{q^{42}}{q^{98}}}
.
\end{align}
For this we apply (\ref{LambertSeriesIdentBerndt}) with $q\mapsto q^7$ and
$a=b=q^2$ to get that
\begin{align*}
	\sum_{n=1}^\infty \frac{(q^{3n} - 2q^{5n} + 2q^{9n} - q^{11n})}{1-q^{14n}} 
	&=
	q^3\frac{\aqprod{q^{14}}{q^{14}}{\infty}^4\jacprod{q^2,q^2,q^4}{q^{14}}}
		{\aqprod{q^7}{q^7}{\infty}^2\jacprod{q^9,q^9,q^{11}}{q^{14}}}
.
\end{align*}
Thus
\begin{align*}
	\frac{\aqprod{q}{q^2}{\infty}}{\aqprod{q^2}{q^2}{\infty}}
	\left(
    	\sum_{n=1}^\infty \frac{(-q^{3n} + 2q^{5n} - 2q^{9n} + q^{11n})}
			{1-q^{14n}} 
	\right)
	&=	
	-q^3\frac{\aqprod{q}{q^2}{\infty}\aqprod{q^{14}}{q^{14}}{\infty}^4\jacprod{q^2,q^2,q^4}{q^{14}}}
		{\aqprod{q^2}{q^2}{\infty}\aqprod{q^7}{q^7}{\infty}^2\jacprod{q^9,q^9,q^{11}}{q^{14}}}
	\\
	&=
	-q^3\frac{\aqprod{q^{14}}{q^{14}}{\infty}\jacprod{q,q^2}{q^{14}}}
		{\aqprod{q^{7}}{q^{14}}{\infty}\jacprod{q^5,q^6}{q^{14}}}
.
\end{align*}
To verify the second identity of (\ref{EqF3Mod7RankProofId3}),
we divide both sides by
$q^3\frac{\aqprod{q^{14}}{q^{14}}{\infty}\jacprod{q,q^2}{q^{14}}}
	{\aqprod{q^{7}}{q^{14}}{\infty}\jacprod{q^5,q^6}{q^{14}}}$,
to get an identity between modular functions on
$\Gamma_1(98)$. We examine the orders at the poles of various modular functions and find
that to prove the identity between modular functions we just need to
verify the identity in the $q$-series expansion past $q^{147}$. We do this in Maple.

Equation (\ref{EqF3Mod7RankId}) now follows from
(\ref{EqF3Mod7RankProofId1}), (\ref{EqF3Mod7RankProofId2}), and
(\ref{EqF3Mod7RankProofId3}).

\end{proof}

%%%%%%%%%%%%%%%%%%%%%%%%%%%%%%%%%%%%%%%%%%%%%%%%%%%%%%%%%%%%%%%%%%%%%%%%%%%%%%%
%%%%%%%%%%%%%%%%%%%%%%%%%%%%%%%%%%%%%%%%%%%%%%%%%%%%%%%%%%%%%%%%%%%%%%%%%%%%%%%
%%%%%%%%%%%%%%%%%%%%%%%%%%%%%%%%%%%%%%%%%%%%%%%%%%%%%%%%%%%%%%%%%%%%%%%%%%%%%%%
\section{Dissections for $S_{G4}(z,q)$ and $S_{AG4}(z,q)$  }

Here we find the $5$-dissections of $S_{G4}(\zeta_5,q)$ and $S_{AG4}(\zeta_5,q)$
using the techniques in \cite{JenningsShaffer}. Atkin and Swinnerton-Dyer
pioneered this method to study the rank of partitions \cite{AS}. Since then 
Lovejoy and Osburn used it to study the Dyson rank of 
overpartitions \cite{LO1}, the $M_2$-rank of overpartitions \cite{LO3}, and the 
$M_2$-rank of 
partitions without repeated odd parts \cite{LO2}. Also Ekin demonstrated that 
it could be used
for the crank of partitions \cite{Ekin}. 
However, it is quicker to derive these dissections
from the product and series 
forms of $S_{G4}(z,q)$ and $S_{AG4}(z,q)$. For this reason, we omit some of 
the details but do include the general identities that lead to the end results.
To begin we use Bailey's Lemma with $\rho_1=z$ and $\rho_2=z^{-1}$ to get that
\begin{align*}
	S_{G4}(z,q)
	&=
		\frac{\aqprod{q^4}{q^4}{\infty}\aqprod{-q}{q^2}{\infty}}
			{\aqprod{z,z^{-1}}{q^2}{\infty}}
		\sum_{n=0}^\infty \aqprod{z,z^{-1}}{q^2}{n}q^{2n}\beta^{G4}_n
		-
		\frac{\aqprod{q^4}{q^4}{\infty}\aqprod{-q}{q^2}{\infty}}
		{\aqprod{z,z^{-1}}{q^2}{\infty}}
	\\
	&=
		\frac{\aqprod{q^4}{q^4}{\infty}\aqprod{-q}{q^2}{\infty}}
		{(1-z)(1-z^{-1})\aqprod{q^2}{q^2}{\infty}^2}
		\left(
		1+
		\sum_{n=1}^\infty\frac{(1-z)(1-z^{-1})(-1)^nq^{\frac{n^2+3n}{2}}(1+q^n)}
			{(1-zq^{2n})(1-z^{-1}q^{2n})}
		\right)
		\\&\quad
		-
		\frac{\aqprod{q^4}{q^4}{\infty}\aqprod{-q}{q^2}{\infty}}
		{\aqprod{z,z^{-1}}{q^2}{\infty}}
	\\
	&=
		\frac{1}{(1-z)(1-z^{-1})\aqprod{q}{q}{\infty}}
		\left(
		1+
		\sum_{n=1}^\infty\frac{(1-z)(1-z^{-1})(-1)^nq^{\frac{n^2+3n}{2}}(1+q^n)}
			{(1-zq^{2n})(1-z^{-1}q^{2n})}
		\right)
		-
		\frac{\aqprod{q^2}{q^2}{\infty}}
		{\aqprod{q,z,z^{-1}}{q^2}{\infty}}
.
\end{align*}
Similarly, for $S_{AG4}(z,q)$, Bailey's Lemma gives that
\begin{align*}
	S_{AG4}(z,q)
	&=
		\frac{1}{(1-z)(1-z^{-1})\aqprod{q}{q}{\infty}}
		\left(
		1+
		\sum_{n=1}^\infty\frac{(1-z)(1-z^{-1})(-1)^nq^{\frac{n^2+n}{2}}(1+q^{3n})}
			{(1-zq^{2n})(1-z^{-1}q^{2n})}
		\right)
		-
		\frac{\aqprod{q^2}{q^2}{\infty}}
		{\aqprod{q,z,z^{-1}}{q^2}{\infty}}
.
\end{align*}

\begin{proposition}\label{G4CrankTerm5Dissection}
\begin{align*}
	\frac{\aqprod{q^2}{q^2}{\infty}}
	{\aqprod{q,\zeta_5q^2,\zeta_5^{-1}q^2}{q^2}{\infty}}
	&=
	\frac{\aqprod{q^{25}}{q^{25}}{\infty}\jacprod{q^{20}}{q^{50}}}
		{\jacprod{q^{10}}{q^{25}}\jacprod{q^{10}}{q^{50}}}
	+
	(\zeta_5+\zeta_5^{4})q^5\frac{\aqprod{q^{50}}{q^{50}}{\infty}}
		{\aqprod{q^{25}}{q^{50}}{\infty}\jacprod{q^{20}}{q^{50}}}
	+
	q\frac{\aqprod{q^{25}}{q^{25}}{\infty}}
		{\jacprod{q^{5}}{q^{25}}}
	\\&\quad
	+
	(\zeta_5+\zeta_5^4)q^2\frac{\aqprod{q^{25}}{q^{25}}{\infty}}
		{\jacprod{q^{10}}{q^{25}}}
	+
	(\zeta_5+\zeta_5^4)q^3\frac{\aqprod{q^{25}}{q^{25}}{\infty}\jacprod{q^{10}}{q^{50}}}
		{\jacprod{q^5}{q^{25}}\jacprod{q^{20}}{q^{50}}}
	\\&\quad
	+
	q^3\frac{\aqprod{q^{50}}{q^{50}}{\infty}}
		{\aqprod{q^{25}}{q^{50}}{\infty}\jacprod{q^{10}}{q^{50}}}
.
\end{align*}
\end{proposition}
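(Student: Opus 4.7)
The plan is to reduce this identity to known ingredients that have already appeared in the paper. First, I would rewrite the left-hand side by separating the $\aqprod{q}{q^2}{\infty}$ factor, noting that
\begin{align*}
\frac{\aqprod{q^2}{q^2}{\infty}}{\aqprod{q,\zeta_5q^2,\zeta_5^{-1}q^2}{q^2}{\infty}}
= \frac{1}{\aqprod{q}{q^2}{\infty}} \cdot \frac{\aqprod{q^2}{q^2}{\infty}}{\aqprod{\zeta_5q^2,\zeta_5^{-1}q^2}{q^2}{\infty}}
= \frac{C(\zeta_5,q^2)}{\aqprod{q}{q^2}{\infty}},
\end{align*}
where $C(z,q)$ is the crank generating function defined in (\ref{EqCrankDef}). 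This already gives a 5-dissection of the numerator via (\ref{CrankZeta5Id}) with $q\mapsto q^2$, so only the $\aqprod{q}{q^2}{\infty}^{-1}$ factor needs to be dissected modulo $5$.

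Next I would apply Lemma \ref{LemmaThetaQuotientDissection} (or the Gauss-type identity derived from (\ref{IdGaussProduct})) to each of the resulting theta quotients. Substituting $q\mapsto q^2$ in (\ref{CrankZeta5Id}) produces terms of the form $q^{2k}\aqprod{q^{50}}{q^{50}}{\infty}\jacprod{q^{10}}{q^{50}}^{\pm 1}/\jacprod{q^{10},q^{20}}{q^{50}}$ etc.; after multiplying by $\aqprod{q}{q^2}{\infty}^{-1}$ and regrouping using the identity $\aqprod{q^2}{q^2}{\infty}/\aqprod{q}{q^2}{\infty}=\aqprod{-q}{q}{\infty}$, one can rewrite each term as a theta quotient whose $q$-expansion, after dissection via Lemma \ref{LemmaThetaQuotientDissection} with $M=5$ and appropriate choices of $z\in\{\pm q^a\}$, distributes into the six residue classes listed on the right-hand side.

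A more uniform alternative, which is probably what I would actually carry out, is to pass directly to modular functions on $\Gamma_1(100)$, mirroring the method used in the proof of (\ref{EqF3Mod7CrankId}). After dividing both sides by, say, $\frac{\aqprod{q^{25}}{q^{25}}{\infty}\jacprod{q^{20}}{q^{50}}}{\jacprod{q^{10}}{q^{25}}\jacprod{q^{10}}{q^{50}}}$, each summand becomes a generalized eta quotient in the sense of Robins \cite{Robins}; Theorem 3 of \cite{Robins} verifies modularity on $\Gamma_1(100)$, and Theorem 4 bounds the orders at the cusps inequivalent to $\infty$. The valence formula then reduces the identity to verifying agreement of the $q$-series up to an explicit finite order, which is a Maple computation. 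The main obstacle is the bookkeeping: correctly assembling all six terms on the right-hand side (together with their $\zeta_5$-coefficients) from the dissection of $C(\zeta_5,q^2)/\aqprod{q}{q^2}{\infty}$, and, in the modular-function approach, computing a large enough cusp-order bound so that only finitely many $q$-coefficients must be checked.
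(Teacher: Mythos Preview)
Your modular-function fallback would certainly work, but both of your proposed routes are considerably more laborious than the paper's, and the first one is not clearly viable as written. The difficulty is your choice of factorization: writing the left-hand side as $C(\zeta_5,q^2)/\aqprod{q}{q^2}{\infty}$ leaves you with the bare factor $1/\aqprod{q}{q^2}{\infty}$, which does not fit the template of Lemma~\ref{LemmaThetaQuotientDissection} and has no simple closed-form $5$-dissection into products. Your description of how to handle the resulting terms (``rewrite each term as a theta quotient\ldots'') is too vague to be a proof, and in practice you would likely be forced into the $\Gamma_1(100)$ verification anyway.

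The paper instead groups the factors the other way:
\[
\frac{\aqprod{q^2}{q^2}{\infty}}{\aqprod{q,\zeta_5q^2,\zeta_5^{-1}q^2}{q^2}{\infty}}
=\frac{\aqprod{q^2}{q^2}{\infty}}{\aqprod{q}{q^2}{\infty}}\cdot\frac{1}{\aqprod{\zeta_5q^2,\zeta_5^{-1}q^2}{q^2}{\infty}}.
\]
The first factor is Gauss's product $\sum q^{n(n+1)/2}$, which $5$-dissects into just three theta terms by the Jacobi triple product (only the residues $0,1,3$ survive). The second factor has a two-term $5$-dissection coming directly from Lemma~3.9 of \cite{Garvan1} with $q\mapsto q^2$. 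Multiplying a three-term expression by a two-term expression gives exactly the six terms on the right-hand side, with no further identities or computer verification needed. The moral: moving $\aqprod{q^2}{q^2}{\infty}$ to the \emph{other} factor turns an intractable $1/\aqprod{q}{q^2}{\infty}$ into the very tractable Gauss sum.
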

\begin{proof}
By Gauss and the Jacobi Triple Product Identity we have
\begin{align*}
	\frac{\aqprod{q^2}{q^2}{\infty}}{\aqprod{q}{q^2}{\infty}}
	&=
	\frac{1}{2}\sum_{n=-\infty}^\infty q^{n(n+1)/2}
	=
	\frac{\aqprod{q^{25}}{q^{25}}{\infty}\jacprod{q^{20}}{q^{50}}}
		{\jacprod{q^{10}}{q^{25}}}
	+
	q\frac{\aqprod{q^{25}}{q^{25}}{\infty}\jacprod{q^{10}}{q^{50}}}
		{\jacprod{q^{5}}{q^{25}}}
	+
	q^3\frac{\aqprod{q^{50}}{q^{50}}{\infty}}{\aqprod{q^{25}}{q^{50}}{\infty}}
.
\end{align*}
By Lemma 3.9 of \cite{Garvan1}, with $q$ replaced by $q^2$ we have
\begin{align*}
	\frac{1}{\aqprod{\zeta_5q^2,\zeta_5^{-1}q^2}{q^2}{\infty}}
	&=
	\frac{1}{\jacprod{q^{10}}{q^{50}}}
	+
	(\zeta_5+\zeta_5^4)\frac{q^2}{\jacprod{q^{20}}{q^{50}}}
.
\end{align*}
Multiplying these two identities together gives the result.
\end{proof}

With Proposition \ref{G4CrankTerm5Dissection} we find (\ref{Dissection5G4})
and (\ref{Dissection5AG4})
to be equivalent to the following.
\begin{proposition}\label{TheoremDissectionPartsForG4AndAG4}
\begin{align*}
	&\frac{1}{\aqprod{q}{q}{\infty}}
	\left(1+ \sum_{n=1}^\infty\frac{(1-\zeta_5)(1-\zeta_5^{-1})(-1)^nq^{\frac{n^2+3n}{2}}(1+q^n)}
		{(1-\zeta_5q^{2n})(1-\zeta_5^{-1}q^{2n})}\right)
	\\
	&=
		\frac{\aqprod{q^{25}}{q^{25}}{\infty}\jacprod{q^{20}}{q^{50}}}
			{\jacprod{q^{10}}{q^{25}}\jacprod{q^{10}}{q^{50}}}
		+
		(1-2\zeta_5-2\zeta_5^4)q^{5}
			\frac{\aqprod{q^{50}}{q^{50}}{\infty}}
			{\aqprod{q^{25}}{q^{50}}{\infty}\jacprod{q^{20}}{q^{50}}}
		\\&\quad
		-
		(1+2\zeta_5+2\zeta_5^4)q^{10}
			\frac{\aqprod{q^{100}}{q^{100}}{\infty}\jacprod{q^{10}}{q^{200}}}
			{\jacprod{q^{10}}{q^{50}}\jacprod{q^{5}}{q^{100}}}
		%%%%%
		+
		\frac{q\aqprod{q^{25}}{q^{25}}{\infty}}
			{\jacprod{q^{5}}{q^{25}}}
		\\&\quad
		-
		(2-\zeta_5-\zeta_5^4)q^{6}
			\frac{\aqprod{q^{100}}{q^{100}}{\infty}\jacprod{q^{30}}{q^{200}}}
			{\jacprod{q^{10}}{q^{50}}\jacprod{q^{15}}{q^{100}}}
		+
		(\zeta_5+\zeta_5^4)\frac{q^2\aqprod{q^{25}}{q^{25}}{\infty}}
			{\jacprod{q^{10}}{q^{25}}}
		\\&\quad	
		-
		(2-\zeta_5-\zeta_5^4)q^{12}
			\frac{\aqprod{q^{100}}{q^{100}}{\infty}\jacprod{q^{10}}{q^{200}}}
			{\jacprod{q^{20}}{q^{50}}\jacprod{q^{5}}{q^{100}}}
		+
		(-1+\zeta_5+\zeta_5^4)q^{3}
			\frac{\aqprod{q^{50}}{q^{50}}{\infty}}
			{\aqprod{q^{25}}{q^{50}}{\infty}\jacprod{q^{10}}{q^{50}}}
		\\&\quad
		+
		(\zeta_5+\zeta_5^4)\frac{q^3\aqprod{q^{25}}{q^{25}}{\infty}\jacprod{q^{10}}{q^{50}}}
			{\jacprod{q^5}{q^{25}}\jacprod{q^{20}}{q^{50}}}
		+
		(1-3\zeta_5-3\zeta_5^4)q^{8}
			\frac{\aqprod{q^{100}}{q^{100}}{\infty}\jacprod{q^{30}}{q^{200}}}
			{\jacprod{q^{20}}{q^{50}}\jacprod{q^{15}}{q^{100}}}
	,
	\\
	%%%%%%%%%%%%%%%%%%%%%%%%%%%%%%%%%%%%%%%%%%%%%%%%%%%%%%%%%%%%%%%%%%
	%%%%%%%%%%%%%%%%%%%%%%%%%%%%%%%%%%%%%%%%%%%%%%%%%%%%%%%%%%%%%%%%%%
	&\frac{1}{\aqprod{q}{q}{\infty}}
	\left(1+ \sum_{n=1}^\infty\frac{(1-\zeta_5)(1-\zeta_5^{-1})(-1)^nq^{\frac{n^2+n}{2}}(1+q^{3n})}
		{(1-\zeta_5q^{2n})(1-\zeta_5^{-1}q^{2n})}\right)
	\\
	&=
		\frac{\aqprod{q^{25}}{q^{25}}{\infty}\jacprod{q^{20}}{q^{50}}}
			{\jacprod{q^{10}}{q^{25}}\jacprod{q^{10}}{q^{50}}}
		+
		(\zeta_5+\zeta_5^{4})q^5\frac{\aqprod{q^{50}}{q^{50}}{\infty}}
			{\aqprod{q^{25}}{q^{50}}{\infty}\jacprod{q^{20}}{q^{50}}}
		+
		(\zeta_5+\zeta_5^4-2)q^{10}
			\frac{\aqprod{q^{100}}{q^{100}}{\infty}\jacprod{q^{10}}{q^{200}}}
			{\jacprod{q^{10}}{q^{50}}\jacprod{q^{5}}{q^{100}}}
		\\&\quad
		+
		(\zeta_5+\zeta_5^4-2)q
			\frac{\aqprod{q^{100}}{q^{100}}{\infty}\jacprod{q^{70}}{q^{200}}}
			{\jacprod{q^{10}}{q^{50}}\jacprod{q^{35}}{q^{100}}}
		+
		q\frac{\aqprod{q^{25}}{q^{25}}{\infty}}
			{\jacprod{q^{5}}{q^{25}}}
		\\&\quad
		-
		(1+2\zeta_5+2\zeta_5^4)q^{6}
			\frac{\aqprod{q^{100}}{q^{100}}{\infty}\jacprod{q^{30}}{q^{200}}}
			{\jacprod{q^{10}}{q^{50}}\jacprod{q^{15}}{q^{100}}}
		+
		(\zeta_5+\zeta_5^4)q^2\frac{\aqprod{q^{25}}{q^{25}}{\infty}}
			{\jacprod{q^{10}}{q^{25}}}
		\\&\quad
		+
		(1-3\zeta_5-3\zeta_5^4)q^{12}
			\frac{\aqprod{q^{100}}{q^{100}}{\infty}\jacprod{q^{10}}{q^{200}}}
			{\jacprod{q^{20}}{q^{50}}\jacprod{q^{5}}{q^{100}}}
		+
		(1-3\zeta_5-3\zeta_5^4)q^{3}
			\frac{\aqprod{q^{100}}{q^{100}}{\infty}\jacprod{q^{70}}{q^{200}}}
			{\jacprod{q^{20}}{q^{50}}\jacprod{q^{35}}{q^{100}}}
		\\&\quad
		+
		(\zeta_5+\zeta_5^4)q^3\frac{\aqprod{q^{25}}{q^{25}}{\infty}\jacprod{q^{10}}{q^{50}}}
			{\jacprod{q^5}{q^{25}}\jacprod{q^{20}}{q^{50}}}
		+
		q^3\frac{\aqprod{q^{50}}{q^{50}}{\infty}}
			{\aqprod{q^{25}}{q^{50}}{\infty}\jacprod{q^{10}}{q^{50}}}
		\\&\quad
		+
		(-2+\zeta_5+\zeta_5^4)q^8
			\frac{\aqprod{q^{100}}{q^{100}}{\infty}\jacprod{q^{30}}{q^{200}}}
			{\jacprod{q^{20}}{q^{50}}\jacprod{q^{15}}{q^{100}}}
.
\end{align*}
\end{proposition}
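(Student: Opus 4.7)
The plan is to apply Theorem \ref{TheoremSingleSeries}, specifically the single-series identities (\ref{TheoremSeriesForG4}) and (\ref{TheoremSeriesForAG4}), with $z=\zeta_5$, and then carry out a $5$-dissection of the resulting theta-type sum on the right-hand side. Setting $z=\zeta_5$ in (\ref{TheoremSeriesForG4}), the coefficient $(1-\zeta_5^{j-1})(1-\zeta_5^j)$ vanishes for $j\equiv 0,1\pmod{5}$, so the sum $\sum_j (1-\zeta_5^{j-1})(1-\zeta_5^j)\zeta_5^{1-j}q^{2j^2-j}$ reduces to three sub-sums over $j=5k+r$ with $r\in\{2,3,4\}$. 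In each sub-sum the exponent becomes $50k^2+(20r-5)k+(2r^2-r)$, and the Jacobi triple product identity evaluates it as a constant multiple of $q^{2r^2-r}\jacprod{-q^{45+20r}}{q^{100}}\aqprod{q^{100}}{q^{100}}{\infty}$. Using the classical identity $\jacprod{-a}{q}=\jacprod{a^2}{q^2}/\jacprod{a}{q}$, each factor $\jacprod{-q^{\cdot}}{q^{100}}$ is rewritten in the $\jacprod{\cdot}{q^{200}}/\jacprod{\cdot}{q^{100}}$ shape that appears on the right-hand side of the proposition.

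Next, I would divide both sides by $(1+\zeta_5)\aqprod{\zeta_5,\zeta_5^{-1}}{q^2}{\infty}$ using the $5$-dissection of $1/\aqprod{\zeta_5 q^2,\zeta_5^{-1}q^2}{q^2}{\infty}$ already recorded in the proof of Proposition \ref{G4CrankTerm5Dissection} (Lemma 3.9 of \cite{Garvan1} with $q\mapsto q^2$), together with $\aqprod{\zeta_5,\zeta_5^{-1}}{q^2}{\infty}=(1-\zeta_5)(1-\zeta_5^{-1})\aqprod{\zeta_5 q^2,\zeta_5^{-1}q^2}{q^2}{\infty}$. Multiplying the two dissected factors and regrouping by the residue of the $q$-exponent modulo $5$ produces the explicit dissection (\ref{Dissection5G4}) of $S_{G4}(\zeta_5,q)$. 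Finally, the Bailey's Lemma identity displayed at the start of Section 6 shows that the series on the left-hand side of Proposition \ref{TheoremDissectionPartsForG4AndAG4} equals $(1-\zeta_5)(1-\zeta_5^{-1})S_{G4}(\zeta_5,q)$ plus the crank product $(1-\zeta_5)(1-\zeta_5^{-1})\aqprod{q^2}{q^2}{\infty}/\aqprod{q,\zeta_5 q^2,\zeta_5^{-1}q^2}{q^2}{\infty}$, so inserting this dissection together with the dissection of the crank product from Proposition \ref{G4CrankTerm5Dissection} and collecting coefficients yields the claimed formula for $G4$. The $AG4$ case is identical, starting from (\ref{TheoremSeriesForAG4}) and the analogous Bailey-Lemma expression.

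The main obstacle will be bookkeeping: correctly expanding, multiplying, and re-collecting the many theta products and powers of $\zeta_5$ that arise when one multiplies the residue-class dissection of $\sum q^{2j^2-j}$ (respectively $\sum q^{2j^2+j}$) by the $5$-dissection of $1/\aqprod{\zeta_5,\zeta_5^{-1}}{q^2}{\infty}$, and then reconciles the result with the precise combination of coefficients demanded by the statement. A slower alternative, modeled on the Atkin and Swinnerton-Dyer technique adapted in \cite{JenningsShaffer}, would establish a closed system of dissection relations satisfied simultaneously by both sides; as the author indicates, that route works but is considerably more tedious.
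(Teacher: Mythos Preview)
Your approach is correct and is in fact the ``quicker'' route the paper alludes to at the opening of Section~6, but it is \emph{not} the proof the paper actually presents there. The paper deliberately gives an argument \emph{independent} of Theorem~\ref{TheoremSingleSeries}: it expands the series on the left of Proposition~\ref{TheoremDissectionPartsForG4AndAG4} directly, splitting according to the parity of $n$ to obtain combinations of the auxiliary functions $V_\ell(b)$ and $U_\ell(b)$; it then applies Lemma~\ref{LemmaUV} to rewrite each $V_5(b)-q^{(b+1)/2}U_5(b+2)$ in terms of Ekin's function $h(z,q)$ and theta products, uses an identity (from Lemma~1 of \cite{Ekin}) to eliminate the $h$-terms in favor of products, and finally reduces the whole statement to an identity between generalized eta quotients verified by the valence formula. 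Your route---setting $z=\zeta_5$ in (\ref{TheoremSeriesForG4}), (\ref{TheoremSeriesForAG4}), $5$-dissecting the resulting theta sum via the triple product, and then passing back through Proposition~\ref{G4CrankTerm5Dissection} and the Bailey--Lemma expression---bypasses all of this machinery. What you gain is brevity and the avoidance of any modular-function computation; what the paper's approach buys is self-containment (it does not rely on the conjugate-Bailey-pair machinery of Section~3) and a template that would also apply to the $J$-group functions, where no single-series product formula is available.
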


The major work in proving identities like this is to find identities that allow
us to see the series terms on the left hand sides as products. To this end, we note that
\begin{align*}
	&1+
	\sum_{n=1}^\infty\frac{(1-z)(1-z^{-1})(-1)^nq^{\frac{n^2+3n}{2}}(1+q^n)}
		{(1-zq^{2n})(1-z^{-1}q^{2n})}
	\\	
	&=
	\sum_{n=-\infty}^\infty \frac{(1-z)(1-z^{-1})(-1)^nq^{\frac{n^2+3n}{2}}}
		{(1-zq^{2n})(1-z^{-1}q^{2n})}
	\\
	&=
	\sum_{n=-\infty}^\infty \frac{(1-z)(1-z^{-1})q^{2n^2+3n}}
		{(1-zq^{4n})(1-z^{-1}q^{4n})}
	-
	\sum_{n=-\infty}^\infty \frac{(1-z)(1-z^{-1})q^{2n^2+5n+2}}
		{(1-zq^{4n+2})(1-z^{-1}q^{4n+2})}
.
\end{align*}
We then define
\begin{align*}
	V_\ell(b) &= \sum_{\substack{n=-\infty\\n\not=0}}^\infty \frac{q^{2n^2+bn}}{1-q^{4\ell n}}
	,
	&U_\ell(b) &= \sum_{n=-\infty}^\infty \frac{q^{2n^2+bn}}{1-q^{4\ell n+2\ell}}
.
\end{align*}
We note replacing $n$ by $-n$ gives that
\begin{align*}
	V_{\ell}(b) &= -V_{\ell}(4\ell-b)
	,
	&U_{\ell}(b) &= -q^{2\ell-b+2}U_\ell(4\ell+4-b)
.
\end{align*}

Next we have
\begin{align*}
	&
	\sum_{n=-\infty}^\infty \frac{(1-\zeta_5)(1-\zeta_5^{-1})q^{2n^2+3n}}
		{(1-\zeta_5q^{4n})(1-\zeta_5^{-1}q^{4n})}
	\\
	&=
		1+
		\sum_{n\not=0}^\infty
		\frac{(1-\zeta_5)(1-\zeta_5^{-1})(-1)^nq^{2n^2+3n}(1-q^{4n})(1-\zeta_5^2q^{4n})(1-\zeta_5^3q^{4n})}
			{(1-q^{20n})}
	\\
	&=
		1+
		(2-\zeta_5-\zeta_5^4)(V_5(3) - V_5(15))
		+
		(-1+3\zeta_5+3\zeta_5^4)(V_5(7) - V_5(11))
	\\
	&=
		1+
		(2-\zeta_5-\zeta_5^4)(V_5(3) + V_5(5))
		+
		(-1+3\zeta_5+3\zeta_5^4)(V_5(7) + V_5(9))
.
\end{align*}
Similarly,
\begin{align*}
	&
	\sum_{n=-\infty}^\infty \frac{(1-\zeta_5)(1-\zeta_5^{-1})q^{2n^2+5n+2}}
		{(1-\zeta_5q^{4n+2})(1-\zeta_5^{-1}q^{4n+2})}
	\\
	&=
		\sum_{n\not=-\infty}^\infty
		\frac{(1-\zeta_5)(1-\zeta_5^{-1})(-1)^nq^{2n^2+5n+2}(1-q^{4n+2})(1-\zeta_5^2q^{4n+2})(1-\zeta_5^3q^{4n+2})}
			{(1-q^{20n+10})}
	\\
	&=
		(2-\zeta_5-\zeta_5^4)(q^2U_5(5) - q^8U_5(17))
		+
		(-1+3\zeta_5+3\zeta_5^4)(q^4U_5(9) - q^6U_5(13))
	\\
	&=
		(2-\zeta_5-\zeta_5^4)(q^2U_5(5) + q^3U_5(7))
		+
		(-1+3\zeta_5+3\zeta_5^4)(q^4U_5(9) + q^5U_5(11))
.
\end{align*}
Thus
\begin{align*}
	&1+ \sum_{n=1}^\infty\frac{(1-\zeta_5)(1-\zeta_5^{-1})(-1)^nq^{\frac{n^2+3n}{2}}(1+q^n)}
		{(1-\zeta_5q^{2n})(1-\zeta_5^{-1}q^{2n})}
	\\
	&=
		1+
		(2-\zeta_5-\zeta_5^4)(V_5(3) - q^2U_5(5) + V_5(5) - q^3U_5(7) )
		\\&\quad
		+
		(-1+3\zeta_5+3\zeta_5^4)(V_5(7) - q^4U_5(9) + V_5(9) - q^5U_5(11))
.
\end{align*}
In the same fashion we deduce that
\begin{align*}
	&1+ \sum_{n=1}^\infty\frac{(1-\zeta_5)(1-\zeta_5^{-1})(-1)^nq^{\frac{n^2+n}{2}}(1+q^{3n})}
		{(1-\zeta_5q^{2n})(1-\zeta_5^{-1}q^{2n})}
	\\
	&=
		1+
		(2-\zeta_5-\zeta_5^4)(V_5(1) - qU_5(3) + V_5(7) - q^4U_5(9)  )
		\\&\quad+
		(-1+3\zeta_5+3\zeta_5^4)(V_5(5) -q^3U_5(7) - V_5(9) + q^5U_5(11))
.
\end{align*}

Similar to Ekin's work in \cite{Ekin}, we use the functions
\begin{align*}
	T(z,w,q) &= \sum_{n=-\infty}^\infty \frac{(-1)^nq^{n(n+1)/2}w^n}{1-zq^n}
	,\\
	T^*(w,q) &= \sum_{\substack{n=-\infty\\n\not=0}}^\infty \frac{(-1)^nq^{n(n+1)/2}w^n}{1-q^n}
	,\\
	h(z,q) &= T^*(z^{-1},q)+zT(z^2,z,q)
.
\end{align*}

\begin{lemma}\label{LemmaUV}
For $b$ and $\ell$ odd integers with $\ell>1$, we have
\begin{align*}
	&V_\ell(b) -q^{\frac{b+1}{2}}U_\ell(b+2)
	\\
	&=
	h(-q^{2\ell^2-b\ell}, q^{4\ell^2})
	+\aqprod{q^{4\ell^2}}{q^{4\ell^2}}{\infty}^2\jacprod{-q^{2\ell^2-b\ell}}{q^{4\ell^2}}
	\sum_{k=1}^{\ell-1}	
	q^{2k^2+bk}\frac{\jacprod{q^{2b\ell+8k\ell}}{q^{4\ell^2}}}
	{\jacprod{q^{4k\ell},q^{2b\ell+4k\ell},-q^{2\ell^2+b\ell+4k\ell} }{q^{4\ell^2}}}
.		
\end{align*}
\end{lemma}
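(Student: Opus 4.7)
The plan is to collapse $V_\ell(b)-q^{(b+1)/2}U_\ell(b+2)$ to a single bilateral Lambert sum, dissect it modulo $2\ell$, and match each slice to a piece of the stated right-hand side. Substituting $m=2n$ in $V_\ell(b)$ and $m=2n+1$ in $U_\ell(b+2)$ (both substitutions are legitimate because $b$ odd forces $(m^2+bm)/2\in\mathbb{Z}$ for every $m$), one obtains
\[ V_\ell(b)-q^{(b+1)/2}U_\ell(b+2)=\sum_{m\ne 0}\frac{(-1)^m q^{(m^2+bm)/2}}{1-q^{2\ell m}}. \]
Writing $m=2\ell k+r$ with $r\in\{0,\ldots,2\ell-1\}$, the slice $r=0$ is exactly $T^*(-q^{b\ell-2\ell^2},q^{4\ell^2})=T^*(z_0^{-1},q^{4\ell^2})$, where $z_0:=-q^{2\ell^2-b\ell}$, while for each $r\in\{1,\ldots,2\ell-1\}$ the $r$-slice factors as $(-1)^rq^{r(r+b)/2}T(q^{2\ell r},-q^{2\ell r+b\ell-2\ell^2},q^{4\ell^2})$.

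Because $b$ and $\ell$ are odd, the residue $r=2\ell-b$ lies in $\{1,\ldots,2\ell-1\}$, and a direct check shows its slice equals $-q^{2\ell^2-b\ell}T(q^{4\ell^2-2b\ell},-q^{2\ell^2-b\ell},q^{4\ell^2})=z_0\,T(z_0^2,z_0,q^{4\ell^2})$. Together with the $r=0$ piece this produces the full $h(z_0,q^{4\ell^2})$. The remaining work is to show that the $2\ell-2$ slices with $r\in\{1,\ldots,2\ell-1\}\setminus\{2\ell-b\}$ collapse to the $\ell-1$ explicit theta quotients. A decisive feature is that every one of these $T$'s has the common ``ratio'' $w/z=z_0^{-1}$, so they lie in the same Appell--Lerch orbit. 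I plan to apply the inversion identity $T(z,w,q)=-z^{-1}T(z^{-1},w^{-1},q)$ together with the period shift $T(z,w,q)=(-w)^s q^{s(s+1)/2}T(zq^s,wq^s,q)$ to pair each slice at $r$ with one at a reflected index, so that the Appell--Lerch parts cancel against each other and only theta-product pieces survive. The surviving bilateral sums are evaluated by the Jacobi triple product $\sum_{j}q^{2\ell^2j^2+cj}=\aqprod{q^{4\ell^2}}{q^{4\ell^2}}{\infty}\jacprod{-q^{2\ell^2+c}}{q^{4\ell^2}}$ together with a Ramanujan $_1\psi_1$-type Lambert-series identity, producing ratios of theta functions.

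The main obstacle is executing this pairing concretely and aligning the result with the stated summands. The natural bookkeeping places the even residues $r=2k$, $k=1,\ldots,\ell-1$, into one-to-one correspondence with the index $k$ of the sum on the right, and already accounts for the prefactor $(-1)^rq^{r(r+b)/2}=q^{2k^2+bk}$. The inversion symmetry is then used to absorb each odd residue $r\notin\{2\ell-b\}$ into the same $k$-term. Matching the explicit theta quotient $\jacprod{q^{2b\ell+8k\ell}}{q^{4\ell^2}}/\jacprod{q^{4k\ell},q^{2b\ell+4k\ell},-q^{2\ell^2+b\ell+4k\ell}}{q^{4\ell^2}}$ after clearing the common factor $\aqprod{q^{4\ell^2}}{q^{4\ell^2}}{\infty}^2\jacprod{-q^{2\ell^2-b\ell}}{q^{4\ell^2}}$ demands careful use of the theta quasi-periodicity $\jacprod{zq^{4\ell^2}}{q^{4\ell^2}}=-z^{-1}\jacprod{z}{q^{4\ell^2}}$, and all signs and shifted exponents must be tracked with care; this is where the bulk of the computational effort will lie.
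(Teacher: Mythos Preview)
Your approach is essentially the one in the paper. The paper does not first merge $V_\ell$ and $U_\ell$ into a single $m$-sum; it dissects each modulo~$\ell$ with the matched substitutions $n\mapsto \ell n+k$ in $V_\ell(b)$ and $n\mapsto \ell n-k+\ell-\tfrac{b+1}{2}$ in $U_\ell(b+2)$, so that the pairing is built in from the start. This is equivalent to your $2\ell$-dissection followed by pairing the even residue $2k$ with the odd residue $2\ell-b-2k$, so the structural idea is the same.

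Where your plan should be sharpened is the evaluation of each pair. The paper does not use inversion or period shifts of $T$; it applies Lemma~2 of Lewis,
\[
wT(zw,w,q)+T(zw^{-1},w^{-1},q)=\frac{\aqprod{q}{q}{\infty}^2\jacprod{z,w^2}{q}}{\jacprod{zw^{-1},zw,w}{q}},
\]
with $q\mapsto q^{4\ell^2}$, $z=-q^{2\ell^2-b\ell}$, $w=-q^{2\ell^2-b\ell-4k\ell}$, and the stated theta quotient drops out in one line per $k$. Your description of ``Appell--Lerch parts cancelling'' after inversion and period shift is not quite how the product arises: the inversion $T(z,w,q)=-z^{-1}T(z^{-1},w^{-1},q)$ only rewrites one member of the pair as a multiple of the other, so by itself it gives $0$, not a theta quotient. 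What actually produces the product is an identity of Lewis type (equivalently a $_1\psi_1$ specialization), and once you invoke it there is no residual ``bulk of the computational effort'' --- the substitution is immediate.
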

\begin{proof}
We have
\begin{align*}
	&V_\ell(b) -q^{\frac{b+1}{2}}U_\ell(b+2)
	\\
	&=
	\sum_{\substack{n=-\infty\\n\not=0}}^\infty\frac{q^{2n^2+bn}}{1-q^{4\ell n}}
	-q^{\frac{b+1}{2}}
	\sum_{n=-\infty}^\infty\frac{q^{2n^2+bn+2n}}{1-q^{4\ell n + 2\ell}}
	\\
	&=
		\sum_{k=0}^{\ell-1}
		q^{2k^2+bk}
		\sum_{\substack{n=-\infty\\(n,k)\not=(0,0)}}^\infty
		\frac{q^{2\ell^2n^2 + b\ell n + 4k\ell n}}{1-q^{4\ell^2n + 4k\ell}}
%		\\&\quad
		-
		\sum_{k=0}^{\ell-1}		
		q^{2k^2 + bk - 4k\ell -b\ell + 2\ell^2}
		\sum_{n=-\infty}^\infty
		\frac{ q^{2\ell^2n^2 + 4\ell^2n -4k\ell n -b\ell n} }
		{1-q^{4\ell^2n+4\ell^2-2b\ell-4k\ell}}
	\\
	&=
		\sum_{k=0}^{\ell-1}
		q^{2k^2+bk}
		\sum_{\substack{n=-\infty\\(n,k)\not=(0,0)}}^\infty
		\frac{(-1)^n q^{2\ell^2n(n+1)} (-q)^{b\ell n + 4k\ell n - 2\ell^2n}}
			{1-q^{4\ell^2n + 4k\ell}}
		\\&\quad
		-
		\sum_{k=0}^{\ell-1}		
		q^{2k^2 + bk - 4k\ell -b\ell + 2\ell^2}
		\sum_{n=-\infty}^\infty
		\frac{ (-1)^nq^{2\ell^2n(n+1)} (-q)^{2\ell^2n -4k\ell n -b\ell n} }
			{1-q^{4\ell^2n+4\ell^2-2b\ell-4k\ell}}
	\\
	&=
		T^*(-q^{b\ell-2\ell^2},q^{4\ell^2})
		-
		q^{2\ell^2-b\ell}T( q^{4\ell^2-2b\ell}, -q^{2\ell^2-b\ell}, q^{4\ell^2} )
		\\&\quad
		+
		\sum_{k=1}^{\ell-1}
		q^{2k^2+bk}
		\left(
			T(q^{4k\ell}, -q^{b\ell + 4k\ell - 2\ell^2}, q^{4\ell^2} )		
			-
			q^{- 4k\ell -b\ell + 2\ell^2}
			T(q^{4\ell^2-2b\ell-4k\ell}, -q^{2\ell^2 -4k\ell -b\ell}, q^{4\ell^2} )
		\right)
	\\
	&=
		h(-q^{2\ell^2-b\ell},q^{4\ell^2})
		\\&\quad
		+
		\sum_{k=1}^{\ell-1}
		q^{2k^2+bk}
		\left(
			T(q^{4k\ell}, -q^{b\ell + 4k\ell - 2\ell^2}, q^{4\ell^2} )		
			-
			q^{- 4k\ell -b\ell + 2\ell^2}
			T(q^{4\ell^2-2b\ell-4k\ell}, -q^{2\ell^2 -4k\ell -b\ell}, q^{4\ell^2} )
		\right)
.
\end{align*}
Here we have replaced $n$ by $\ell n+k$ in $V_\ell(b)$ and
replaced $n$ by $\ell n - k + \ell - \frac{b+1}{2}$ in $U_\ell(b+2)$.
By Lemma 2 of \cite{Lewis} we have that
\begin{align*}
	wT(zw,w,q) + T(zw^{-1},w^{-1},q)
	&=
	\frac{\aqprod{q}{q}{\infty}^2\jacprod{z,w^2}{q}}
		{\jacprod{zw^{-1},zw,w}{q}}
.
\end{align*}
We note one could also deduce this identity using Theorem 2.1 of \cite{Chan}.
Applying this with $q\mapsto q^{4\ell^2}$,
$w=-q^{2\ell^2 - b\ell - 4k\ell}$,
$z=-q^{2\ell^2 - b\ell}$ yields
\begin{align*}
	&T(q^{4k\ell}, -q^{b\ell + 4k\ell - 2\ell^2}, q^{4\ell^2} )		
	-
	q^{- 4k\ell -b\ell + 2\ell^2}
	T(q^{4\ell^2-2b\ell-4k\ell}, -q^{2\ell^2 -4k\ell -b\ell}, q^{4\ell^2} )
	\\
	&=
	\frac{\aqprod{q^{4\ell^2}}{q^{4\ell^2}}{\infty}^2
		\jacprod{-q^{2\ell^2-b\ell}, q^{4\ell^2-2b\ell-8k\ell}}{q^{4\ell^2}}}
	{\jacprod{q^{4k\ell}, q^{4\ell^2-2b\ell-4k\ell}, -q^{2\ell^2-b\ell-4k\ell}}{q^{4\ell^2}}}
	\\
	&=
	\frac{\aqprod{q^{4\ell^2}}{q^{4\ell^2}}{\infty}^2
		\jacprod{-q^{2\ell^2-b\ell}, q^{2b\ell+8k\ell}}{q^{4\ell^2}}}
	{\jacprod{q^{4k\ell}, q^{2b\ell+4k\ell}, -q^{2\ell^2+b\ell+4k\ell}}{q^{4\ell^2}}}
.
\end{align*}
This completes the proof.
\end{proof}

Also one can express combinations of $h(z,q)$ in terms of products by using
Lemma 1 of \cite{Ekin}.
For our purposes, we could use the following, the proof of which is basic algebra
and applying Lemma 1 of \cite{Ekin}.
\begin{proposition}
\begin{align*}
	&(3a-d)h(-q^{5},q^{100})
	+(3b-a)h(-q^{15},q^{100})
	+(3c-b)h(-q^{45},q^{100})
	+(3d-c)h(-q^{35},q^{100})
	+c+d
	\\
	&=
		a\frac{\aqprod{q^{100}}{q^{100}}{\infty}^2\jacprod{q^{10}}{q^{100}}^3}
			{\jacprod{-q^5}{q^{100}}^3\jacprod{-q^{15}}{q^{100}}}	
		+
		(c-a)\frac{\aqprod{q^{100}}{q^{100}}{\infty}^2\jacprod{q^{20}}{q^{100}}^3}
			{\jacprod{q^{10}}{q^{100}}^3\jacprod{q^{30}}{q^{100}}}	
		+
		b\frac{\aqprod{q^{100}}{q^{100}}{\infty}^2\jacprod{q^{30}}{q^{100}}^3}
			{\jacprod{-q^{15}}{q^{100}}^3\jacprod{-q^{45}}{q^{100}}}	
		\\&\quad
		+		
		(d-b)\frac{\aqprod{q^{100}}{q^{100}}{\infty}^2\jacprod{q^{40}}{q^{100}}^3}
			{\jacprod{q^{30}}{q^{100}}^3\jacprod{q^{10}}{q^{100}}}	
		+
		cq^{35}\frac{\aqprod{q^{100}}{q^{100}}{\infty}^2\jacprod{q^{10}}{q^{100}}^3}
			{\jacprod{-q^{45}}{q^{100}}^3\jacprod{-q^{35}}{q^{100}}}	
		+
		dq^{5}\frac{\aqprod{q^{100}}{q^{100}}{\infty}^2\jacprod{q^{30}}{q^{100}}^3}
			{\jacprod{-q^{35}}{q^{100}}^3\jacprod{-q^{5}}{q^{100}}}	
.
\end{align*}
\end{proposition}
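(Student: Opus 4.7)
The plan is to derive this identity from Lemma 1 of \cite{Ekin} applied to each of the four $h$-values appearing on the left-hand side, namely $h(-q^{5},q^{100})$, $h(-q^{15},q^{100})$, $h(-q^{45},q^{100})$, and $h(-q^{35},q^{100})$. Ekin's Lemma 1 provides an identity by which a suitable two-term combination of $h$-values at ``conjugate'' base points collapses to a single Jacobi theta quotient; here the four base points $-q^{5},-q^{15},-q^{35},-q^{45}$ fall into two natural conjugate pairs modulo $q^{100}$, and the coefficients $3a-d$, $3b-a$, $3c-b$, $3d-c$ on the left-hand side have been chosen precisely so that after pairing, the surviving contributions match the six theta quotients on the right.

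Because both sides are linear in each of the parameters $a,b,c,d$, the proof decomposes into four essentially independent verifications, one per parameter. For instance, setting $b=c=d=0$ reduces the claim to a two-term identity relating $3a\cdot h(-q^{5},q^{100}) - a\cdot h(-q^{15},q^{100})$ to the first and fourth theta quotients on the right-hand side (those with coefficients $a$ and $-a$); this is Lemma 1 of \cite{Ekin} specialized to that choice of base points. Cyclic shifts of this argument, using the remaining three pairings $3b\cdot h(-q^{15})-b\cdot h(-q^{45})$, $3c\cdot h(-q^{45})-c\cdot h(-q^{35})$, and $3d\cdot h(-q^{35})-d\cdot h(-q^{5})$, handle the coefficients of $b$, $c$, and $d$.

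The constant $c+d$ on the left-hand side accounts for the normalization arising from the $n=0$ contribution in the definition $h(z,q) = T^*(z^{-1},q)+zT(z^2,z,q)$, where the $T$ piece includes $n=0$ while $T^*$ does not; this constant will be absorbed into rearrangements of the $c$- and $d$-coefficient theta quotients via the Jacobi triple product identity applied at base $q^{100}$. The main obstacle is the bookkeeping: one must verify that after applying Ekin's lemma to each of the four $h$-values, the resulting theta quotients assemble into exactly the six displayed on the right, with precisely the stated coefficients and exponents, and that the auxiliary theta-function normalizations needed to pass from the form produced by Ekin's lemma to the form written in the statement hold at the modulus $q^{100}$. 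Once the four pairings are written out explicitly, the remaining verification is a mechanical simplification with no further inputs beyond the Jacobi triple product and elementary rearrangement of $q$-shifted factorials.
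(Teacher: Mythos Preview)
Your approach is essentially the same as the paper's: the paper states only that the proof is ``basic algebra and applying Lemma 1 of \cite{Ekin},'' and your plan to exploit linearity in $a,b,c,d$ together with Ekin's lemma is exactly that. One small correction to your bookkeeping: when you set $b=c=d=0$, the surviving right-hand terms are the first and \emph{second} quotients (coefficients $a$ and $c-a=-a$), not the first and fourth; similarly the $b$-case picks out the third and fourth, the $c$-case the second and fifth, and the $d$-case the fourth and sixth, so the pattern is not a simple cyclic shift as you describe. This does not affect the method, only the labeling.
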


Also we note that
\begin{align*}
	h(-q^{25},q^{100})
	&=
	\frac{\aqprod{q^{100}}{q^{100}}{\infty}^2\jacprod{q^{50}}{q^{100}}^4}
	{\jacprod{-q^{25}}{q^{100}}^4 }
	=
	\frac{\aqprod{q^{25}}{q^{25}}{\infty}^4}{\aqprod{q^{100}}{q^{100}}{\infty}^2}
.
\end{align*}

This would allow use to express the identities in 
Proposition \ref{TheoremDissectionPartsForG4AndAG4}
just in terms of infinite products. We could then rewrite the identities
strictly in terms of modular functions. The identity in terms of
modular functions could then be proved as we did for the identities
in (\ref{EqF3Mod7CrankId}), (\ref{EqF3Mod7RankProofId1}), and
(\ref{EqF3Mod7RankProofId3}). We do not include these calculations here.

%%%%%%%%%%%%%%%%%%%%%%%%%%%%%%%%%%%%%%%%%%%%%%%%%%%%%%%%%%%%%%%%%%%%%%%%%%%%%%%
%%%%%%%%%%%%%%%%%%END DISSECTIONS FOR G4 AND AG4%%%%%%%%%%%%%%%%%%%%%%%%%%%%%%%
%%%%%%%%%%%%%%%%%%%%%%%%%%%%%%%%%%%%%%%%%%%%%%%%%%%%%%%%%%%%%%%%%%%%%%%%%%%%%%%

\section{Proof of Corollary \ref{TheoremEqualCranks}}
We note that Theorem \ref{TheoremDissections} immediately gives that the
coefficients of
$q^{3n}$ in $S_{F3}(\zeta_3,q)$,
$q^{5n+1}$ in $S_{B2}(\zeta_5,q)$,
$q^{5n+4}$ in $S_{B2}(\zeta_5,q)$,
$q^{5n}$ in $S_{F3}(\zeta_5,q)$,
$q^{5n+4}$ in $S_{F3}(\zeta_5,q)$,
$q^{5n+4}$ in $S_{G4}(\zeta_5,q)$,
$q^{5n+4}$ in $S_{AG4}(\zeta_5,q)$,
$q^{7n+1}$ in $S_{B2}(\zeta_7,q)$,
$q^{7n+5}$ in $S_{B2}(\zeta_7,q)$,
$q^{7n}$ in $S_{F3}(\zeta_7,q)$,
$q^{7n+4}$ in $S_{F3}(\zeta_7,q)$, and
$q^{7n+6}$ in $S_{F3}(\zeta_7,q)$
are all zero. Thus the identities in Corollary \ref{TheoremEqualCranks} for
$B2$, $F3$, $G4$, and $AG4$ follow. 
We still need to prove that the coefficients of
$q^{3n+2}$ in $S_{J1}(\zeta_3,q)$,
$q^{3n}$ in $S_{J2}(\zeta_3,q)$, and
$q^{3n+1}$ in $S_{J3}(\zeta_3,q)$ are zero.
We prove this by using Theorem \ref{TheoremSingleSeries}.

For $J1$, we use (\ref{TheoremSeriesForJ1}) to see that
\begin{align*}
	&(1+\zeta_3)(1-\zeta_3)(1-\zeta_3^{-1})\aqprod{q^3}{q^3}{\infty}
	S_{J1}(\zeta_3,q)
	\\
	&=
	\sum_{j=2}^\infty 
	\frac{(1-\zeta_3^j)(1-\zeta_3^{j-1})\zeta_3^{1-j}(-1)^{j+1}q^{\frac{j(j-1)}{2}}
			(1 - q^{j} - q^{2j-2} + q^{4j-3} + q^{5j-2} - q^{6j-3})}  
		{(1-q^{3j-3})(1-q^{3j})}
	.
\end{align*}
Thus the non-zero terms occur only when $j\equiv 2\pmod{3}$, 
but one finds that
$q^{\frac{j(j-1)}{2}}(1 - q^{j} - q^{2j-2} + q^{4j-3} + q^{5j-2} - q^{6j-3})$ 
only contributes terms of the form $q^{3n}$ and $q^{3n+1}$
when $j\equiv 2\pmod{3}$. Thus $S_{J1}(\zeta_3,q)$ has no non-zero terms of the
form $q^{3n+2}$.

For $J2$, we use (\ref{TheoremSeriesForJ2}) to see that
\begin{align*}
	&(1+\zeta_3)(1-\zeta_3)(1-\zeta_3^{-1})\aqprod{q^3}{q^3}{\infty}S_{J2}(\zeta_3,q)
	\\
	&=
	\sum_{j=2}^\infty 
	\frac{(1-\zeta_3^j)(1-\zeta_3^{j-1})\zeta_3^{1-j}(-1)^{j+1}q^{\frac{j(j-1)}{2}}
			(1-q^{j-1}-q^{2j}+q^{4j-1} +q^{5j-3} - q^{6j-3})}  
		{(1-q^{3j-3})(1-q^{3j})}
	.
\end{align*}
Thus the non-zero terms occur only when $j\equiv 2\pmod{3}$, 
but one finds that
$q^{\frac{j(j-1)}{2}}(1-q^{j-1}-q^{j}-q^{2j}+q^{4j-1} +q^{5j-3} - q^{6j-3})$ 
only contributes terms of the form $q^{3n+1}$ and $q^{3n+2}$
when $j\equiv 2\pmod{3}$. Thus $S_{J2}(\zeta_3,q)$ has no non-zero terms of the
form $q^{3n}$.

For $J3$, we use (\ref{TheoremSeriesForJ3}) to see that
\begin{align*}
	&(1+\zeta_3)(1-\zeta_3)(1-\zeta_3^{-1})\aqprod{q^3}{q^3}{\infty}S_{J3}(\zeta_3,q)
	\\
	&=
	\sum_{j=2}^\infty 
	\frac{(1-\zeta_3^j)(1-\zeta_3^{j-1})\zeta_3^{1-j}(-1)^{j+1}q^{\frac{j(j-1)}{2}}
			(q^{j-1}-q^{j}-q^{2j-2}+q^{2j}+q^{4j-3} -q^{4j-1} - q^{5j-3} +q^{5j-2} )}  
		{(1-q^{3j-3})(1-q^{3j})}
	.
\end{align*}
Thus the non-zero terms occur only when $j\equiv 2\pmod{3}$, 
but one finds that
$q^{\frac{j(j-1)}{2}}(q^{j-1}-q^{j}-q^{2j-2}+q^{2j}+q^{4j-3} -q^{4j-1} - q^{5j-3} +q^{5j-2} )$ 
only contributes terms of the form $q^{3n}$ and $q^{3n+1}$
when $j\equiv 2\pmod{3}$. Thus $S_{J3}(\zeta_3,q)$ has no non-zero terms of the
form $q^{3n+1}$.

\section{Concluding Remarks}

We could also prove dissection identities for 
$S_{J2}(\zeta_3,q)$ and $S_{J3}(\zeta_3,q)$ in the same way we proved the 
dissections for $S_{G4}(\zeta_5,q)$ and $S_{AG4}(\zeta_5,q)$. It would require
defining functions similar to $V_\ell$, $U_\ell$, $T$, and $h$ and finding
the appropriate identities. Whereas $S_{G4}(z,q)$ and $S_{AG4}(z,q)$ use 
functions and formulas similar to those used for the crank,
$S_{J2}(z,q)$ and $S_{J3}(z,q)$ would use functions and formulas similar 
to those used for the rank. We save this for another time.

We might think to try the methods of this paper with the following Bailey pairs relative to 
$(1,q^2)$,
\begin{align*}
	\beta^{F1}_n &= \frac{1}{\aqprod{q,q^2}{q^2}{n}}
	,&
	\alpha^{F1}_n &= 
	\left\{\begin{array}{ll}
		1 & \mbox{ if } n=0
		\\
		q^{2n^2-n}(1+q^{2n}) & \mbox{ if } n \ge 1
	\end{array}\right.,
	\\
	\beta^{G*}_n &= \frac{q^{n^2-2n}}{\aqprod{q^4}{q^4}{n}\aqprod{q}{q^2}{n}}
	,&
	\alpha^{G*}_n &= 
	\left\{\begin{array}{ll}
		1 & \mbox{ if } n=0
		\\
		(-1)^{n(n-1)/2} q^{n(n-3)/2}(1+(-1)^nq^{3n}) & \mbox{ if } n \ge 1
	\end{array}\right.
	\\
	\beta^{G**}_n &= \frac{q^{n^2}}{\aqprod{q^4}{q^4}{n}\aqprod{q}{q^2}{n}}
	,&
	\alpha^{G**}_n &= 
	\left\{\begin{array}{ll}
		1 & \mbox{ if } n=0
		\\
		(-1)^{n(n+1)/2} q^{n(n-1)/2}(1+(-1)^nq^{n}) & \mbox{ if } n \ge 1
	\end{array}\right.
.
\end{align*}
These Bailey pairs are $F(1)$ and the first two Bailey pairs a
listed on page 470 of \cite{Slater1}. We would then define the following series.
\begin{align*}
	S_{F1}(z,q)
	&=
	\frac{\aqprod{q}{q^2}{\infty}}{\aqprod{z,z^{-1}}{q^2}{\infty}}
	\sum_{n=1}^\infty \aqprod{z,z^{-1}}{q^2}{n}q^{2n}\beta_n^{F1}
	=
	\frac{\aqprod{q}{q^2}{\infty}}{\aqprod{z,z^{-1}}{q^2}{\infty}}
	\sum_{n=1}^\infty \frac{\aqprod{z,z^{-1}}{q^2}{n}q^{2n}}
		{\aqprod{q,q^2}{q^2}{n}}
	,
	\\
	S_{G*}(z,q)
	&=
	\frac{\aqprod{q^4}{q^4}{\infty}\aqprod{q}{q^2}{\infty}}
		{\aqprod{z,z^{-1}}{q^2}{\infty}}
	\sum_{n=1}^\infty \aqprod{z,z^{-1}}{q^2}{n}q^{2n}\beta^{G*}_n
	=
	\frac{\aqprod{q^4}{q^4}{\infty}\aqprod{q}{q^2}{\infty}}
		{\aqprod{z,z^{-1}}{q^2}{\infty}}
	\sum_{n=1}^\infty \frac{\aqprod{z,z^{-1}}{q^2}{n}q^{n^2}}
		{\aqprod{q^4}{q^4}{n}\aqprod{q}{q^2}{n}}
	,
	\\
	S_{G**}(z,q)
	&=
	\frac{\aqprod{q^4}{q^4}{\infty}\aqprod{q}{q^2}{\infty}}
		{\aqprod{z,z^{-1}}{q^2}{\infty}}
	\sum_{n=1}^\infty \aqprod{z,z^{-1}}{q^2}{n}q^{2n}\beta^{G*}_n
	=
	\frac{\aqprod{q^4}{q^4}{\infty}\aqprod{q}{q^2}{\infty}}
		{\aqprod{z,z^{-1}}{q^2}{\infty}}
	\sum_{n=1}^\infty \frac{\aqprod{z,z^{-1}}{q^2}{n}q^{n^2+2n}}
		{\aqprod{q^4}{q^4}{n}\aqprod{q}{q^2}{n}}
	.
\end{align*}
At first it appears that these series may explain congruences for other
new spt functions, however, they are old functions in disguise. In particular
one finds that $S_{G*}(z,-q)=S_{AG4}(z,q)$ and $S_{G**}(z,-q)=S_{G4}(z,q)$.
Similarly $S_{F1}(z,-q) = 	S2(z,q)$, where $S2(z,q)$ is a two variable 
generalization for the M2spt function studied in \cite{GarvanJennings}. 

While this paper gives the last of the spt-crank-type functions
for Bailey pairs from \cite{Slater1} and \cite{Slater2},
with such simple linear congruences, we should expect there to be many more
interesting spt-crank-type functions. There are plenty of other Bailey pairs
from other sources that may lead to new functions. Also we have not used
all the Bailey pairs from \cite{Slater1} and \cite{Slater2}, we have only
used those that have simple congruences.
So far all Bailey pairs have been
relative to $(a,q)$ with $a=1$, but a slight change in the form of the 
spt-crank-type functions may allow for many useful functions coming from
other values of $a$. In a coming paper, we investigate Bailey pairs arising
from variations of Bailey's Lemma and conjugate Bailey pairs.

The functions studied here and in \cite{JenningsShaffer} and \cite{GarvanJennings2} may
have additional properties worth studying. While the $M_{A1}(m,n)$ were given
a combinatorial interpretation in \cite{JenningsShaffer} (in particular they
are non-negative), work on the other $M_{X}(m,n)$ still needs to be done.
Additionally, the original spt functions for partitions and overpartitions are 
known to be related to mock modular forms and harmonic Maass forms. 
Any of the other spt functions that can be expressed in terms of known
rank functions and infinite products will also lead to harmonic Maass forms,
so these functions can be studied from that aspect as well.

\bibliographystyle{abbrv}
\bibliography{BaileyGroupBFGJRef}

\end{document}